\documentclass[reqno]{amsart}

\usepackage{amsfonts,amsmath,amssymb,color,enumerate,amsthm,hyperref,bigints,dsfont,graphicx,tikz}
\usepackage[kerning=true]{microtype}

\usetikzlibrary{decorations.markings}
\usetikzlibrary{plotmarks}
\usetikzlibrary{patterns}

\numberwithin{equation}{section}

\newtheorem{theorem}{Theorem}[section]

\newtheorem{remark}[theorem]{Remark}
\newtheorem{definition}[theorem]{Definition}
\newtheorem{proposition}[theorem]{Proposition}
\newtheorem{lemma}[theorem]{Lemma}

\newcommand{\be}{\begin{equation*}}
\newcommand{\ee}{\end{equation*}}
\newcommand{\ben}{\begin{equation}}
\newcommand{\een}{\end{equation}}
\newcommand{\begincal}{\begin{eqnarray*}}
\newcommand{\fincal}{\end{eqnarray*}}
\newcommand{\bal}{\begin{aligned}}
\newcommand{\eal}{\end{aligned}}
\newcommand{\ds}{\displaystyle}

\DeclareMathOperator{\bigO}{O}
\DeclareMathOperator{\smallo}{o}
\DeclareMathOperator{\Scal}{Scal}
\DeclareMathOperator{\Weyl}{Weyl}
\DeclareMathOperator{\Vol}{Vol}

\newcommand{\loc}{loc}

\newcommand{\eps}{\varepsilon}
\newcommand{\R}{\mathbb{R}}
\renewcommand{\S}{\mathbb{S}}
\newcommand{\N}{\mathbb{N}}

\newcommand{\<}{\left<}
\renewcommand{\>}{\right>}
\renewcommand{\(}{\left(}
\renewcommand{\)}{\right)}

\newcommand{\pui}{\frac{n-2}{2}}

\newcommand{\vp}{\varphi}

\numberwithin{equation}{section}

\title[Sign-changing blow-up for the Yamabe equation]{Sign-changing blow-up for the Yamabe equation at the lowest energy level}

\author{Bruno Premoselli}

\address{Bruno Premoselli, Universit\'e Libre de Bruxelles, Service d'analyse, CP 218, Boulevard du Triomphe, B-1050 Bruxelles, Belgique.}
\email{bruno.premoselli@ulb.be}

\author{J\'er\^ome V\'etois}

\address{J\'er\^ome V\'etois, Department of Mathematics and Statistics, McGill University, 805 Sherbrooke Street West, Montreal, Quebec H3A 0B9, Canada}
\email{jerome.vetois@mcgill.ca}

\thanks{The first author was supported by the FNRS CdR grant J.0135.19, the Fonds Th\'elam and an ARC Avanc\'e 2020 grant. The second author was supported by the Discovery Grant RGPIN-2016-04195 from the Natural Sciences and Engineering Research Council of Canada.}

\date{June 16, 2022}

\begin{document}

\begin{abstract}
We investigate the blow-up behavior of sequences of sign-changing solutions for the Yamabe equation on a Riemannian manifold $(M,g)$ of positive Yamabe type. For each dimension $n\ge11$, we describe the value of the minimal energy threshold at which blow-up occurs. In dimensions $11 \le n \le 24$, where the set of \emph{positive} solutions is known to be compact, we show that the set of \emph{sign-changing} solutions is not compact and that blow-up already occurs at the lowest possible energy level. We prove this result by constructing a smooth, non-locally conformally flat metric on space forms $\S^n/\Gamma$, $\Gamma \neq \{1\}$, whose Yamabe equation admits a family of sign-changing blowing-up solutions. As a counterpart of this result, we also prove a sharp compactness result for sign-changing solutions at the lowest energy level, in small dimensions or under strong geometric assumptions. 
\end{abstract}

\maketitle

\section{Introduction}\label{Intro}

\subsection{Introduction and statements of the main results} 

Let $\(M,g\)$ be a smooth, closed (i.e. compact and without boundary) Riemannian manifold of dimension $n\ge3$. In this paper, we are interested in the existence of sequences of sign-changing blowing-up solutions $\(u_k\)_{k\in\N}$ in $C^2\(M\)$ to the nodal (or sign-changing) Yamabe equation
\begin{equation}\label{IntroEq1}
\Delta_gu_k+c_n\Scal_gu_k=\left|u_k\right|^{2^*-2}u_k\quad\text{in }M,
\end{equation}
where $\Delta_g:=-\text{div}_g\nabla$ is the Laplace--Beltrami operator, $c_n:=\frac{n-2}{4\(n-1\)}$, $\Scal_g$ is the scalar curvature of the manifold and $2^* =\frac{2n}{n-2}$ is the critical exponent for the embeddings of the Sobolev space $H^1\(M\)$ into Lebesgue's spaces. Solutions of \eqref{IntroEq1} are in $C^{3,\alpha}(M)$ for $0 < \alpha < \min(2^*-2,1)$ by Trudinger's result \cite{Trudinger} and standard elliptic theory. We recall that $\(u_k\)_{k\in\N}$ is said to \emph{blow up} as $k \to \infty$ if $\left\| u_k \right\|_{L^\infty(M)}\to\infty$ as $k\to\infty$. 

\smallskip
The \emph{Yamabe invariant} of the conformal class $[g]$ is defined as 
$$\bal 
Y(M,[g])&:=\inf_{\hat{g}\in [g]}\(\Vol_{\hat{g}}\(M\)^{\frac{2-n}{n}}\int_M\Scal_{\hat{g}}dv_{\hat{g}}\) \\
& =\frac{4(n-1)}{n-2}\cdot\inf_{u \in C^\infty(M)} \frac{\int_M\(|\nabla u|_g^2 + c_n \Scal_g u^2\)dv_g}{\( \int_M |u|^{2^*} dv_g \)^{\frac{2}{2^*}}},
\eal $$
where $\Vol_{\hat{g}}\(M\)$ is the volume of $\(M,\hat{g}\)$. Letting $\(\S^n,g_{std}\)$ be the standard unit sphere of dimension $n\ge3$, by conformal invariance, we also have
 $$ Y(\mathbb{S}^n,[g_{std}]) =  \inf_{u \in C^\infty_c(\R^n)} \frac{\int_{\R^n} |\nabla u|^2 dx }{\( \int_{\R^n} |u|^{2^*} dx \)^{\frac{2}{2^*}}}, $$
so that $Y(\mathbb{S}^n,[g_{std}])^{-\frac{1}{2}}$ is the optimal constant for the Sobolev inequality in $\R^n$. We say that $(M,g)$ is of \emph{positive Yamabe type} if $Y(M,[g])>0$, i.e. $ \triangle_{g} + c_n \Scal_{g} $ is coercive. In this case, when $(M,g)$ is not conformally diffeomorphic to the standard sphere $(\mathbb{S}^n, g_{std})$ (which we denote in what follows by $ (M,g) \not \approx (\mathbb{S}^n, g_{std})$), we have $Y(M,[g]) < Y(\mathbb{S}^n,[g_{std}])$, and the existence of a positive solution $u_0$ to the Yamabe equation
\begin{equation}\label{IntroEq4}
\Delta_gu_0+c_n\Scal_g u_0=u_0^{2^*-1}\quad\text{in }M
\end{equation}
attaining the Yamabe invariant $Y(M,[g])$ is known since the work of Trudinger \cite{Trudinger}, Aubin \cite{Aubin2} and Schoen \cite{SchoenYamabe}. 

\smallskip
In this paper, we are interested in determining the value of the minimal energy level at which \emph{sign-changing} blow-up occurs for \eqref{IntroEq1}. For every $u \in H^1(M)$, we define the energy of $u$ as 
$$E\(u\):=\int_M\left|u\right|^{2^*}dv_g.$$ 
We define $I(M, [g]) \subset (0, \infty]$ as the set of numbers $E\in (0, + \infty]$ such that \eqref{IntroEq1} admits a \emph{blowing-up} sequence of solutions $(u_k)_{k\in\N}$  with $\limsup_{k \to\infty} E(u_k) = E$. We then define
\[ E(M, [g]):= \inf I(M, [g]). \]
Similarly, we let $I_+(M, [g]) \subset (0,+ \infty]$ be the set of numbers $E\in(0, + \infty]$ such that \eqref{IntroEq1} admits a \emph{blowing-up} sequence of \emph{positive} solutions $(u_k)_{k\in\N}$  satisfying $\limsup_{k \to \infty} E(u_k) = E$, and we define
\[ E_+(M, [g]):= \inf I_+(M, [g]). \]
The value $+ \infty$ is allowed in the definitions of $E(M,[g])$ and $E_+(M, [g])$ and corresponds to sequences of solutions of \eqref{IntroEq1} with diverging energies. When $I(M,[g])$ (resp. $I_+(M,[g])$) is empty, it means that \eqref{IntroEq1} does not admit any blowing-up solutions (resp. positive blowing-up solutions), and thus the set of solutions (resp. positive solutions) of \eqref{IntroEq1} is compact in $C^2(M)$ by standard elliptic theory. In this case, we let $E(M,[g]) := -\infty$ (resp  $E_+(M,[g]): = -\infty$). If $I(M,[g]) \neq \emptyset$ and $(u_k)_{k\in\N}$ is a sequence of solutions of \eqref{IntroEq1} satisfying $\limsup_{k \to \infty} E(u_k) < E(M, [g])$, then by definition $(u_k)_{k \in \mathbb{N}}$ does not blow-up, and is thus precompact in $C^2(M)$. 

\smallskip
As long as \emph{positive} solutions are considered, $E_+(M, [g])$ is well understood: in the proof of the compactness of positive solutions of the Yamabe equation, assuming the validity of the Positive Mass Theorem when $n \ge 8$,  Khuri--Marques--Schoen \cite{KhuMaSc} (with previous contributions by Schoen \cite{SchoenPreprint,Schoenlcf}, Li--Zhu \cite{LiZhu}, Druet \cite{DruetYlowdim}, Marques \cite{Marques} and Li--Zhang \cite{LiZhang1,LiZhang2}) proved that
\begin{equation} \label{poscomp}
  E_+(M, [g]) =  -\infty \text{ when } \left\{ \begin{aligned} & 3 \le n \le 24 \text{ and } (M,g) \not \approx (\mathbb{S}^n, g_{std}), \\ & (M,g) \text{ is locally conformally flat or } \\  &n \ge 6 \text{ and }  \sum_{k=0}^{[\frac{n-6}{2}]}|\nabla^k\Weyl_g(x)|^2 > 0 \; \;\forall x \in M, \end{aligned} \right. 
  \end{equation}
where $\Weyl_g$ is the Weyl curvature tensor of the manifold. On the other side, Brendle \cite{Brendle} and Brendle--Marques \cite{BrendleMarques} proved that there exists a non-locally conformally flat metric $g$ on $\S^n$ such that 
\begin{equation} \label{poscomp2}
 E_+(\S^n, [g]) = Y(\mathbb{S}^n,[g_{std}])^{\frac{n}{2}}  \quad \text{ when } n \ge 25 .
\end{equation}

\smallskip
The blow-up behavior of \emph{sign-changing} solutions of \eqref{IntroEq1} is far less understood. A simple application of Struwe's celebrated $H^1$-compactness result \cite{Struwe} (see Proposition \ref{Pr1} below for a proof) shows that if $(M, g)$ is of positive Yamabe type, then any \emph{sign-changing} blowing-up sequence $(u_k)_{k\in\N}$ of solutions of \eqref{IntroEq1} satisfies
\begin{equation} \label{IntroStruwe}
\liminf_{k \to  \infty}E\(u_k\)\ge Y(\mathbb{S}^n,[g_{std}])^{\frac{n}{2}} + Y(M,[g])^{\frac{n}{2}}. 
\end{equation}
 As a consequence, if $g$ is the Brendle \cite{Brendle} and Brendle--Marques \cite{BrendleMarques} metric, then, by \eqref{poscomp2}, we obtain
$$ E(\S^n, [g]) = E_+(\S^n, [g]) = Y(\mathbb{S}^n,[g_{std}])^{\frac{n}{2}}  \quad \text{ when } n \ge 25. $$
In dimensions $n\ge 25$, blow-up for \eqref{IntroEq1} thus occurs at the lowest energy level for positive solutions. Our aim in this paper is to investigate the situation for \emph{sign-changing} solutions and in particular, the value of $E(M, [g])$ in dimensions $n\le24$, where the set of \emph{positive} solutions of \eqref{IntroEq1} is compact, i.e. $E_+(M, [g])=-\infty$. As follows from \eqref{IntroStruwe}, a lower bound on $E(M, [g])$ is given by $Y(\mathbb{S}^n,[g_{std}])^{\frac{n}{2}} + Y(M,[g])^{\frac{n}{2}}$. Our main result shows that this lower bound is attained in dimensions $11 \le n \le 24$:

\begin{theorem}\label{Th0}
Assume that $11 \le n\le24$ and let $\Gamma$ be a finite subgroup of isometries of $(\mathbb{S}^n, g_0)$, $\Gamma \neq \{Id\}$, acting freely and smoothly on $\mathbb{S}^n$. There exists a smooth, non-locally conformally flat Riemannian metric $g$ on $\mathbb{S}^n /\Gamma$ of positive Yamabe type and such that
$$ E(M,[g]) = Y(\mathbb{S}^n,[g_{std}])^{\frac{n}{2}}+Y(\mathbb{S}^n /\Gamma,[g])^{\frac{n}{2}}. $$
\end{theorem}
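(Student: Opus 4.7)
The plan is to build, via a Lyapunov--Schmidt reduction, a family of sign-changing blowing-up solutions $u_\alpha$ of \eqref{IntroEq1} of the form
\[ u_\alpha = u_0 - W_\alpha + \phi_\alpha, \]
where $u_0 > 0$ is the Yamabe minimizer on $(M,g)=(\mathbb{S}^n/\Gamma, g)$, $W_\alpha$ is a suitably projected standard Sobolev bubble concentrating at a point $\xi_\alpha \to \xi_0$ with scale $\delta_\alpha \to 0$, and $\phi_\alpha$ is a small remainder. Such a family blows up at $\xi_0$ and changes sign (negative near $\xi_\alpha$, positive elsewhere); moreover, by Brezis--Lieb applied to its single-bubble $H^1$-decomposition, $E(u_\alpha) \to E(u_0) + E(B) = Y(M,[g])^{n/2} + Y(\mathbb{S}^n,[g_{std}])^{n/2}$. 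Combined with \eqref{IntroStruwe}, this pins down $E(M,[g])$ to the announced value.

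The metric $g$ is constructed as a carefully localized smooth perturbation of the round quotient metric $g_0$ on $\mathbb{S}^n/\Gamma$. Since $\Gamma \neq \{\id\}$ acts freely, $(M, g_0)$ is compact with positive constant scalar curvature and its Yamabe minimizer is constant and non-degenerate. One then adds a small bump to $g_0$ near a chosen point $\xi_0 \in M$ so that the resulting $g$ is smooth, not locally conformally flat, and has $\Weyl_g(\xi_0) \neq 0$ (or, in dimensions $n=11,12,13$, a suitable low-order covariant derivative non-zero). Keeping the perturbation small in $C^\infty$-norm preserves the positive Yamabe invariant and ensures that the Yamabe minimizer $u_0$ of $(M,g)$ is smooth, positive, close to constant, and non-degenerate. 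For each $(\xi, \delta)$ close to $(\xi_0, 0)$, one then solves the projected nonlinear equation for $\phi$ in the orthogonal complement of the approximate kernel $\mathcal{K}$ of the linearization of \eqref{IntroEq1} around $u_0 - W_{\xi, \delta}$, where $\mathcal{K}$ is spanned by the derivatives of $W_{\xi,\delta}$ in $\xi$ and $\delta$. Invertibility on $\mathcal{K}^\perp$ combines the non-degeneracy of $u_0$ with that of the Euclidean bubble and yields sharp pointwise and energy estimates on $\phi$.

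The problem then reduces to locating a critical point of the finite-dimensional functional $J_\alpha(\xi,\delta) := E(u_0 - W_{\xi,\delta} + \phi_{\xi,\delta})$, whose critical points correspond to genuine solutions of \eqref{IntroEq1}. The main technical task---and the expected core obstacle---is the asymptotic expansion of $J_\alpha$ as $\delta \to 0$. Three competing contributions enter at the leading orders: a bubble self-energy correction which, in dimension $n \ge 7$ and away from locally conformally flat points, contains a negative term of order $-\delta^4 |\Weyl_g(\xi)|^2$ (or the analogous expression involving $\nabla^k \Weyl_g(\xi)$ when $n=11,12,13$); the cross-interaction $-\int_M u_0\, W_{\xi,\delta}^{2^*-1} dv_g$ of order $-\delta^{(n-2)/2} u_0(\xi)$, carrying a favorable sign inherited from the sign-changing ansatz; and a positive mass-type term of order $\delta^{n-2}$ coming from the Green's function of $\Delta_g + c_n\Scal_g$ at $\xi$. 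Since $4 < (n-2)/2 < n-2$ for $11 \le n \le 24$, the Weyl term dominates at very small scales while the interaction term dominates at moderate scales, and their competition produces an interior critical point in $\delta$. The favorable sign of the cross-interaction---absent in the positive-solution analysis of Brendle and Brendle--Marques---is precisely what allows the construction to succeed in the dimensions $n \le 24$ where positive blow-up is obstructed by the positive mass theorem. The core technical difficulty lies in obtaining pointwise remainder estimates on $\phi_\alpha$ sharp enough to guarantee that $J_\alpha$ retains the critical point structure predicted by this heuristic balance.
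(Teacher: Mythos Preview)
Your proposal has the right architecture (Lyapunov--Schmidt reduction around $u_0 - B_{\xi,\delta}$, then reduced-energy analysis), but two genuine gaps prevent it from working as stated.

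First, a single Weyl bump does not produce a blowing-up \emph{sequence}. For a \emph{fixed} metric with $\Weyl_g(\xi_0)\neq 0$, the reduced energy in the scale variable behaves like $-c_1|\Weyl_g(\xi_0)|^2\delta^4+c_2\,u_0(\xi_0)\,\delta^{(n-2)/2}$ with $c_1,c_2>0$; since $(n-2)/2>4$ for $n\ge11$, this has an isolated critical point at a \emph{fixed} scale $\delta_0>0$ determined by $|\Weyl_g(\xi_0)|$ and $u_0(\xi_0)$, yielding at most one solution, not a sequence with $\delta_\alpha\to0$. The paper instead builds $g$ with \emph{infinitely many} disjoint shrinking bumps of amplitude $\eps_k\to0$ centred at points $y_k\to x_0$ (see \eqref{deftg}); near the $k$-th bump the critical scale is $\mu_k=\eps_k^{4/(n-10)}\to0$, and the resulting solutions $u_k$ form a genuine blowing-up sequence for the \emph{same} fixed $g$. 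Relatedly, for $n\ge11$ the Pohozaev identity behind Theorem~\ref{Th2} (see \eqref{Th3Eq5}) forces $\Weyl_g(\xi_0)=0$ at the limit concentration point of any sequence of the form \eqref{IntroEq3}, so your choice $\Weyl_g(\xi_0)\neq0$ is directly incompatible with threshold blow-up; in the paper the limit point $x_0$ lies outside every bump and $g$ is flat there.

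Second, the signs in your expansion are off. The bubble--$u_0$ interaction in the energy functional $I$ is $+\Lambda(n)\,u_0(\xi_0)\,\delta^{(n-2)/2}$ (positive, not negative), there is no mass term, and the mechanism is the competition between the negative $-\,c\,\delta^4$ Weyl correction and this positive interaction, producing a strict minimum in the scale variable with negative value (Lemma~\ref{lemmez0}). Finally, the full reduced critical point is of saddle type: the Hessian of $F$ in the centre variable $z$ vanishes at $z=0$ (Proposition~\ref{propder2}), so a fourth-order expansion in $z$ is required (Propositions~\ref{propF1} and~\ref{propF3}, equation \eqref{Forder4}), together with a structural condition such as \eqref{condih} on the perturbing tensor to guarantee a strict local maximum in $z$. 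Your outline does not address this degeneracy.
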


The metric $g$ in Theorem \ref{Th0} is not the quotient metric $g_{\Gamma}$ but can be chosen arbitrarily close to it. Theorem \ref{Th0}  is a special case of a more general result, Theorem \ref{Th1} below, which holds true in any dimension $n \ge 11$ and for a larger class of manifolds than the spherical space forms $\mathbb{S}^n/\Gamma$. We refer to Section \ref{secmainth} for more details on this regard. 

\smallskip 
By \eqref{poscomp}, the set of \emph{positive} solutions of \eqref{IntroEq1} on $(\S^n/\Gamma, g)$, where $g$ is given by Theorem \ref{Th0}, is compact in $C^2(M)$. As Theorem \ref{Th0} shows, however, in this case, the set of \emph{sign-changing} solutions is not compact and blow-up already occurs at the minimal energy level. This phenomenon of loss of compactness for sign-changing solutions in situations where the set of positive solutions is compact was recently highlighted in Premoselli--V\'etois \cite{PremoselliVetois2} for critical Schr\"odinger-type equations in $M$. We prove Theorem \ref{Th0} by constructing a sign-changing blowing-up sequence $(u_k)_{k\in\N}$ of solutions of \eqref{IntroEq1} such that
\begin{equation}\label{IntroEq5}
\lim_{k \to \infty}E\(u_k\)=Y(\mathbb{S}^n,[g_{std}])^{\frac{n}{2}} + Y(M,[g])^{\frac{n}{2}}.
\end{equation}

\smallskip 
As a counterpart of Theorem \ref{Th0}, we also prove the following result, which provides a lower bound for $E(M,[g])$ in smaller dimensions or under strong geometric assumptions:

\begin{theorem}\label{Th2}
Let $\(M,g\)$ be a smooth, closed Riemannian manifold of dimension $n\ge3$ and positive Yamabe type which is not conformally diffeomorphic to the standard sphere $(\mathbb{S}^n,[g_{std}])$. Assume that one of the following conditions is satisfied: 
\begin{itemize}
\item $(M,g)$ is locally conformally flat, 
\item $ n\le 9$, 
\item $n=10$ and $u_0\ne\frac{5}{567}|\Weyl_g|^2_g$ for all points in $M$ and all solutions $u_0$ of \eqref{IntroEq4} attaining $Y(M,[g])$, or 
\item $n\ge11$ and $\Weyl_g\ne0$ for all points in $M$.
\end{itemize}
Then
\[ E(M, [g]) > Y(\mathbb{S}^n,[g_{std}])^{\frac{n}{2}}+Y(M,[g])^{\frac{n}{2}}. \]
\end{theorem}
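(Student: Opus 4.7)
The plan is to argue by contradiction. Suppose that $E(M,[g])=Y(\mathbb{S}^n,[g_{std}])^{n/2}+Y(M,[g])^{n/2}$; combined with \eqref{IntroStruwe}, this produces a sequence $(u_k)_{k\in\N}$ of sign-changing blowing-up solutions of \eqref{IntroEq1} satisfying $E(u_k)\to Y(\mathbb{S}^n,[g_{std}])^{n/2}+Y(M,[g])^{n/2}$, and the task is to rule such a sequence out.

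The first step is to apply Struwe's $H^1$-decomposition (Proposition \ref{Pr1}) to $(u_k)_k$. Up to a subsequence one obtains
\begin{equation*}
u_k = u_\infty + \sum_{i=1}^N \varepsilon_i B_{i,k} + o(1) \quad\text{in }H^1(M),
\end{equation*}
where $u_\infty$ solves \eqref{IntroEq4} (possibly $u_\infty\equiv 0$), each $B_{i,k}$ is a standard Talenti bubble of concentration scale $\mu_{i,k}\to 0$, and $\varepsilon_i\in\{-1,+1\}$. Since the energies add, and since $E(u_\infty)\ge Y(M,[g])^{n/2}$ whenever $u_\infty\not\equiv 0$, and since $Y(M,[g])<Y(\mathbb{S}^n,[g_{std}])$ (as $(M,g)\not\approx(\mathbb{S}^n,g_{std})$), the energy equality forces $N=1$ together with $u_\infty$ attaining $Y(M,[g])$. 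By Aubin--Schoen, $u_\infty$ is then (up to sign) a positive Yamabe minimizer, and the sign-changing assumption on $u_k$ forces $\varepsilon_1=-1$. The limiting profile is therefore
\begin{equation*}
u_k = u_\infty - B_k + o(1) \quad\text{in }H^1(M),
\end{equation*}
with $u_\infty>0$ on $M$, and $B_k$ a positive standard bubble at some $x_k\to x_0\in M$ of scale $\mu_k\to 0$.

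I would then upgrade this $H^1$-decomposition to pointwise $C^0$-estimates on the error $u_k-(u_\infty-B_k)$, both in rescaled bubble coordinates near $x_k$ and on the complement of a shrinking neighborhood of $x_0$. This uses standard blow-up iteration techniques (Moser iteration, Green's function comparison, rescaling along the bubble profile) adapted to the sign-changing regime; the key new feature compared to the purely positive blow-up analysis is that the nonzero background $u_\infty(x_0)>0$ produces a sharp cross-interaction term with $B_k$ that must be tracked at every scale.

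The conclusion then comes from a local Pohozaev identity applied to $u_k$ in conformal normal coordinates at $x_0$, on a small fixed ball $B_r(x_0)$, and expanded as $k\to\infty$ using the pointwise control. Two kinds of contributions govern the identity at leading order: the self-interaction of $B_k$, which produces curvature-dependent terms with coefficient $|\Weyl_g(x_0)|_g^2$ in the non-locally-conformally-flat case, or the mass of the Green's function of $\Delta_g+c_n\Scal_g$ at $x_0$ in the LCF case; and the cross-interaction with the background, which produces a contribution proportional to $u_\infty(x_0)$. Depending on the dimension, these two types of terms enter at different orders in $\mu_k$, and the Pohozaev identity reduces to an algebraic balance of the form $A\,u_\infty(x_0)+B\,(\text{curvature or mass at }x_0)=0$. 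Under each of the four hypotheses of the theorem, this balance becomes inconsistent: in the locally conformally flat case, the mass is strictly positive by the Positive Mass Theorem and $u_\infty(x_0)>0$; in dimensions $n\le 9$ the curvature contribution enters at strictly lower order, so the balance reduces to the impossible $u_\infty(x_0)=0$; in dimension $n=10$ the curvature and background terms enter at the same order, and the non-degeneracy assumption $u_\infty\ne\tfrac{5}{567}|\Weyl_g|_g^2$ is precisely what ensures the linear combination is nonzero; finally in dimensions $n\ge 11$ with $\Weyl_g$ nowhere vanishing, the $|\Weyl_g(x_0)|_g^2$ term dominates and has definite sign. The hard part is the precise Pohozaev expansion and the computation of the constants in each dimension, and this is where the dimension thresholds of the statement as well as the explicit coefficient $\tfrac{5}{567}$ in dimension $10$ originate.
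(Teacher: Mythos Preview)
Your overall strategy matches the paper's: argue by contradiction, apply Struwe's decomposition to reach the one-bubble profile $u_k=u_0-B_k+o(1)$ with $u_0>0$ a Yamabe minimizer, upgrade to pointwise control (the paper uses Premoselli's $C^0$ theory and then a Chen--Lin-type refinement, Lemma~\ref{Lem3}), and finally extract from a local Pohozaev identity (Lemma~\ref{Lem2}) a balance between the background contribution $u_0(\xi_0)\,\overline\mu_k^{(n-2)/2}$ and the curvature contribution $a_n|\Weyl_g(\xi_0)|_g^2\,\overline\mu_k^{4}$; the dimensional trichotomy $n\le9$, $n=10$, $n\ge11$ is exactly the one you describe (see \eqref{Th3Eq5}).

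Two points where your sketch departs from the paper. First, in the locally conformally flat case you invoke the mass of the Green's function and the Positive Mass Theorem, but neither is used in the paper's argument. When $\Weyl_g\equiv0$, \eqref{Th3Eq5} reduces directly to $u_0(\xi_0)\,\overline\mu_k^{(n-6)/2}=o(\overline\mu_k^{(n-6)/2})$, which forces $u_0(\xi_0)=0$ and already contradicts $u_0>0$; the mass would enter only at the much higher order $\overline\mu_k^{n-2}$ and never appears in the leading balance. Second, the refined pointwise estimate (Lemma~\ref{Lem3}) underlying the Pohozaev expansion is only stated and proved for $n\ge7$ (the error profile $\overline\mu_k^4/(1+|y|)^{n-6}$ degenerates otherwise); the paper treats the range $3\le n\le 6$ separately by citing the earlier compactness result of Premoselli--V\'etois \cite{PremoselliVetois2}, whereas your sketch folds all $n\le 9$ into a single argument.
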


Under the assumptions of Theorem \ref{Th2} and by \eqref{poscomp}, the set of \emph{positive} solutions of \eqref{IntroEq1} is compact in $C^2(M)$, and thus blow-up for \eqref{IntroEq1} can only occur for \emph{sign-changing} solutions, but at an energy level that is strictly higher than the minimal one given by \eqref{IntroStruwe}. In particular, if the assumptions of Theorem \ref{Th2} are satisfied, then there exists a constant $\eps_0>0$ such that the set of all solutions $u\in C^2\(M\)$ of \eqref{IntroEq1} satisfying
$$E\(u\)\le Y(M,[g])^{\frac{n}{2}}+Y(\mathbb{S}^n,[g_{std}])^{\frac{n}{2}}+\eps_0$$
is compact in $C^2\(M\)$. The contrapositive of Theorem \ref{Th2} provides necessary conditions for sign-changing blowing-up solutions to exist at the minimal energy level. In dimensions $n\ge11$, this condition is that $\Weyl_g$ vanishes at some (but not all) points in $M$. This is consistent with Theorem \ref{Th0} since the sequence $(u_k)_{k\in\N}$ that we construct to prove Theorem \ref{Th0} blows up at a point where $\Weyl_g$ vanishes. Hence, Theorems \ref{Th0} and \ref{Th2} are sharp in every dimension $11 \le n \le 24$. 
The condition arising when $n=10$ is purely analytical (see \eqref{Th3Eq5} below). 
The exact value of $E(M, [g])$ when $3 \le n \le 10$ is not yet known: as Theorem \ref{Th2} shows, at least when $n \le 9$, it is strictly larger than $Y(M,[g])^{\frac{n}{2}}+Y(\mathbb{S}^n,[g_{std}])^{\frac{n}{2}} $. Computing $E(M,[g])$ when $3 \le n \le 10$ will be the focus of forthcoming work. 

\smallskip
We conclude this subsection by mentioning an important additional motivation for investigating the value of $E(M,[g])$ and, more generally, (non-)compactness issues for sign-changing solutions of \eqref{IntroEq1}. 
Let $g$ be a Riemannian metric in $M$ of positive Yamabe type. For $k \in\N$ and $\tilde g \in [g]$, we denote by $\lambda_k(\tilde g)$ the $k$-th eigenvalue (counted with multiplicity) of the conformal Laplacian $\triangle_{\tilde g} + c_n \Scal_{\tilde g}$ in $M$. Ammann--Humbert introduced in \cite{AmmannHumbert}  the so-called $k$-th Yamabe invariant:
\[\mu_k(M, [g]) = \inf_{\tilde g \in [g]} \lambda_k(\tilde g) \Vol_{\hat{g}}\(M\)^{\frac{2}{n}}. \]
In the case where $k=2$, test functions computations show that
\begin{equation} \label{propmu2}
\mu_2(M,[g])^{\frac{n}{2}} \le Y(M,[g])^{\frac{n}{2}}+Y(\mathbb{S}^n,[g_{std}])^{\frac{n}{2}}
\end{equation}
holds. For $k \ge 2$, extremal metrics attaining $\mu_k(M,[g])$, when they exist, are not smooth in general. When $(M,g)$ is of positive Yamabe type,  Amman--Humbert \cite{AmmannHumbert} established the existence of extremal metrics attaining $\mu_2(M,[g])$ provided $(M,g)$ is not locally conformally flat and $n\ge11$. Moreover, Amman--Humbert \cite{AmmannHumbert} obtained that if $\mu_2(M,[g])$ is attained by a generalized metric $\tilde g = u^{\frac{4}{n-2}} g$ with $u \ge 0$ and $\Vert u \Vert_{L^{2^*}(M)} = 1$, then there exists a generalized eigenvector $\tilde w$ associated to $\mu_2(M,[g])$ such that $u = |\tilde w|$ and $w:= \mu_2(M,[g])^{\frac{n-2}{4}} \tilde w$ is a sign-changing solution of \eqref{IntroEq1} satisfying $E(w) = \mu_2(M,[g])^{\frac{n}{2}}$. With \eqref{propmu2}, this shows that extremal metrics for $\mu_2(M,[g])$ give rise to sign-changing solutions of \eqref{IntroEq1} whose energies lie below the minimal blow-up level given by \eqref{IntroStruwe}. Theorem \ref{Th2} thus provides a compactness result for a range of energy levels including $\mu_2(M,[g])$. Such a result is generally perceived as a strong indication that $\mu_2(M,[g])$ is attained (at least for analogous problems in the two-dimensional case, see for example Matthiesen--Siffert \cite{MatthiesenSiffert} or P\'etrides \cite{Petrides3}): Theorem \ref{Th2} can therefore also be seen as a first step in a more systematic investigation of $\mu_k(M,[g])$. 
 
\subsection{Review of the literature and outline of the paper}

Existence results for the nodal Yamabe equation \eqref{IntroEq1} have been the subject of several work in the last decades. On the standard sphere $(\mathbb{S}^n, g_{std})$, existence results of large-energy sign-changing solutions of \eqref{IntroEq1} are in Ding \cite{Ding}, del Pino--Musso--Pacard--Pistoia \cite{DelPinoMussoPacardPistoia1,DelPinoMussoPacardPistoia2}, Musso--Wei \cite{MussoWei}, Medina--Musso--Wei \cite{MedinaMussoWei} and Medina--Musso \cite{MedinaMusso}; other existence results at lower energy levels are in  Clapp \cite{Clapp} and Fernandez--Petean \cite{FernandezPetean}. For more general manifolds, existence and multiplicity results of sign-changing solutions of \eqref{IntroEq1} have been obtained by Ammann--Humbert \cite{AmmannHumbert}, V\'etois \cite{Vetois}, Clapp--Fern\'andez \cite{ClappFernandez}, Clapp--Pistoia--Tavares \cite{ClappPistoiaTavares} and Gursky--P\'erez-Ayala \cite{GurskyPerez}. Multiplicity results of sign-changing solutions of Yamabe--Schr\"odinger-type equations with more general potential functions can also be found in V\'etois \cite{Vetois} and Clapp--Fern\'andez \cite{ClappFernandez}. 

\smallskip
Theorem \ref{Th0} is both a non-compactness and an existence result: it shows in particular the existence of infinitely many solutions of \eqref{IntroEq1} on $(\S^n/\Gamma, g)$. Theorem \ref{Th2}, on the contrary, is a compactness result for \eqref{IntroEq1} below the energy level $Y(\mathbb{S}^n,[g_{std}])^{\frac{n}{2}}+Y(M,[g])^{\frac{n}{2}}$. Compactness and non-compactness results for sign-changing solutions of Yamabe--Schr\"odinger-type equations have been obtained by V\'etois \cite{Vetois} and recently by Premoselli--V\'etois \cite{PremoselliVetois1,PremoselliVetois2} (see also Robert--V\'etois \cite{RobertVetois3,RobertVetois4}, Pistoia--V\'etois \cite{PistoiaVetois} and Deng--Musso--Wei \cite{DengMussoWei} for existence results of sign-changing blowing-up solutions to equations of type \eqref{IntroEq1} with asymptotically critical nonlinearities). A general pointwise description of finite-energy blowing-up sequences of solutions of such equations, including the geometric case of \eqref{IntroEq1}, has recently been obtained by Premoselli \cite{Premoselli13}. To the best of the authors' knowledge, Theorem \ref{Th0} is the first constructive result of sign-changing \emph{blowing-up} solutions for the geometric equation \eqref{IntroEq1} on a different manifold than the standard sphere. 

\smallskip
The paper is organised as follows. In Section \ref{secmainth}, we state Theorem \ref{Th1} which is a generalization of Theorem \ref{Th0} in dimensions $n \ge 11$. We then prove Theorem \ref{Th1} in Sections \ref{secLS} and \ref{secreduced}. The proof relies on a constructive Lyapunov--Schmidt reduction method. Our approach is inspired from the constructions on the  sphere by Brendle \cite{Brendle} and Brendle--Marques \cite{BrendleMarques} (see also Ambrosetti--Malchiodi \cite{AmbrosettiMalchiodi2} and Berti--Malchioldi \cite{BertiMalchiodi}). In Section \ref{secLS}, we perform the Lyapunov--Schmidt reduction and construct a blowing-up sequence of approximate solutions of \eqref{IntroEq1} of the form $u_k = u_0 - B_k +$  lower order terms, where $u_0 >0$ solves \eqref{IntroEq1} and $B_k$ is a bubbling profile modeled on the positive standard bubble (see \eqref{defW} below). In Section \ref{secreduced}, we reduce the proof of Theorem \ref{Th1} to finding a critical point of an energy function in $\R^{n+1}$ (see \eqref{defF} below). The main difference with the constructions of Brendle \cite{Brendle} and Brendle--Marques \cite{BrendleMarques} for positive solutions  is that the critical point of $F$ that we find is of saddle-type: this allows us to conclude up to dimension $11$ but in turn forces us to work with greater precision and expand the reduced energy to the fourth order (see \eqref{Forder4} below). Finally, we prove Theorem \ref{Th2} in Section \ref{Sec:linearpert} by using a Pohozaev-type identity together with the pointwise blow-up description for sign-changing solutions of \eqref{IntroEq1} recently obtained by Premoselli \cite{Premoselli13}, which we refine here by using an approach based on iterated estimates, in the spirit of the method introduced by Chen--Lin \cite{ChenLin} in the case of positive solutions (see also Marques \cite{Marques} for applications to the Yamabe equation). 

\section{$Y$-non-degenerate metrics and a refined version of Theorem \ref{Th0}} \label{secmainth}

Let $(M,g)$ be a smooth, closed Riemannian manifold of dimension $n \ge 3$ and positive Yamabe type. By the resolution of the Yamabe problem (see Trudinger \cite{Trudinger}, Aubin \cite{Aubin2} and Schoen \cite{SchoenYamabe}) there exists a smooth positive function $u_0$ in $M$ that attains $Y(M,[g])$ and solves \eqref{IntroEq4}.
In particular $\int_M u_0^{2^*} dv_g = Y(M,[g])^{\frac{n}{2}}$. Let $\vp \in C^{\infty}(M), \vp >0$. By the conformal invariance property of the conformal Laplacian, $\hat{u}_0 = u_0/\vp$ solves 
\[ \triangle_{g_0} \hat{u}_0 + c_n \Scal_{g_0} \hat{u}_0 = \hat{u}_0^{2^*-1}\quad\text{in }M,\]
where we have let $g_0:= \vp^{2^*-2}g$. Assume that one of the positive minimizers $u_0$ achieving $Y(M,[g])$ is non-degenerate as a solution of the Yamabe equation. This means that 
\ben \label{u0nondeg}
  \textrm{Ker} \big(  \triangle_{g} + c_n \Scal_{g} - (2^*-1) u_0^{2^*-2} \big) = \{0\},
  \een
where this kernel is regarded as a subset of $H^1(M)$.  A simple application of the Implicit Function Theorem shows that for any metric $g_*$ close enough to $g$ in some $C^p$ topology, $p \ge 3$, there exists a unique positive $u_* \in C^2(M)$ close to $u_0$ in $C^2(M)$ that solves 
\ben \label{ustar}
 \triangle_{g_*} u_* + c_n \Scal_{g_*} u_* = u_*^{2^*-1}\quad\text{in }M 
 \een
and that is also non-degenerate. Generically with respect to perturbations of the metric, at least in dimensions $n\le24$ (see Theorem~10.3 of Khuri--Marques--Schoen \cite{KhuMaSc}), all positive solutions of \eqref{IntroEq4} are non-degenerate. In the locally conformally flat case, concrete examples of situations where $u_0$ is non-degenerate are given by $u_0:=\(\(n-2\)/2\)^{\frac{n-2}{2}}$ on $\S^1\(r\)\times\S^{n-1}$, where $\S^1\(r\)$ is the circle of radius $r\in\(0,\infty\)\backslash\left\{i/\sqrt{n-2}:i\in\N\right\}$ and $\S^{n-1}$ is the unit $(n-1)$-sphere, both equipped with their standard metrics (see Proposition~3.4 of Robert--V\'etois \cite{RobertVetois3}). 

\smallskip
We introduce the following definition:

\begin{definition}  \label{nondege}
Let $(M,g)$ be a smooth, closed Riemannian manifold of dimension $n \ge 3$ and positive Yamabe type. We say that $g$ is $Y$-non-degenerate if 
\begin{itemize}
\item one of the positive minimizers $u_0$ achieving $Y(M,[g])$ is non-degenerate
\item and there exists a constant $\eps_g >0$ such that for any metric $g_*$ satisfying $\Vert g_* - g\Vert_{C^3(M)} \le \eps_g$, the unique function $u_*$ close to $u_0$ in $C^2(M)$ satisfying \eqref{ustar} still attains $Y(M,[g_*])$, i.e. satisfies 
\[ \int_M u_*^{2^*} dv_{g_*} = Y(M,[g_*])^{\frac{n}{2}}. \] 
\end{itemize} 
\end{definition}

By the conformal invariance of $\triangle_g + c_n \Scal_g$, if $g$ is Y-non-degenerate in $M$ then any metric in the conformal class  of $g$ is still Y-non-degenerate. This notion allows us to state a generalization of Theorem \ref{Th0}:
 
\begin{theorem} \label{Th1}
Let $(M,\hat{g})$ be a smooth, closed, locally conformally flat Riemannian manifold of dimension $n \ge 11$ which is \emph{Y-non-degenerate} in the sense of Definition \ref{nondege}. Then there exist a smooth, non-locally conformally flat metric $g$ in $M$ and a sequence of blowing-up sign-changing solutions $(u_k)_{k\in\N}$, $u_k \in C^{3,\alpha}(M)$ for some $0 < \alpha < 1$, to the nodal Yamabe equation for $g$:
\[ \triangle_{g} u_k + \frac{n-2}{4(n-1)} \Scal_{g} u_k = |u_k|^{2^*-2} u_k\quad\text{in }M,\]
which satisfies 
\[ \int_M |u_k|^{2^*}dv_{g} \nearrow Y(M,[g])^{\frac{n}{2}} + Y(\mathbb{S}^n,[g_{std}])^{\frac{n}{2}}   \]
as $k \to \infty$. 
\end{theorem}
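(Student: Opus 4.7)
The plan is to adapt the Lyapunov--Schmidt reduction strategy of Brendle \cite{Brendle} and Brendle--Marques \cite{BrendleMarques} from the positive to the sign-changing setting, with a refinement in low dimensions needed to handle a saddle-type critical point. The first step is to fix a point $\xi_0 \in M$ and construct $g$ by modifying $\hat g$ in a small geodesic ball around $\xi_0$. Concretely, working in conformal normal coordinates centered at $\xi_0$, I would write $g = \exp(h) \hat g$ where $h$ is a compactly supported trace-free symmetric $2$-tensor with small coefficients, so that $g$ is $C^3$-close to $\hat g$ but $\Weyl_g(\xi_0) \neq 0$. By the $Y$-non-degeneracy hypothesis (Definition \ref{nondege}), this metric $g$ admits a non-degenerate positive minimizer $u_0$ of the Yamabe functional for $g$, close to the minimizer associated with $\hat g$, and with $\int_M u_0^{2^*}\,dv_g = Y(M,[g])^{n/2}$.

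The ansatz for the approximate solution is $W_\alpha := u_0 - B_\alpha$, where $B_\alpha$ is a standard Euclidean bubble of concentration $\delta_\alpha \to 0$ centered at a point $\xi_\alpha \to \xi_0$, pulled back to $M$ via the exponential chart at $\xi_\alpha$ and suitably cut off. The sign-changing character comes for free from the subtraction: $W_\alpha$ is large negative on a tiny ball around $\xi_\alpha$ and close to $u_0 > 0$ outside. The next step is to seek an exact solution of \eqref{IntroEq1} of the form $u_\alpha = W_\alpha + \phi_\alpha$, where $\phi_\alpha$ lies in the $L^2$-orthogonal complement of the approximate kernel spanned by the $n+1$ translation--dilation Jacobi fields associated with $B_\alpha$; the non-degeneracy assumption on $u_0$ ensures that the linearization at $u_0$ contributes no further obstruction. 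A contraction-mapping argument in a weighted norm, using invertibility of the linearized operator on the relevant orthogonal complement, then produces $\phi_\alpha$ satisfying \eqref{IntroEq1} modulo the Jacobi fields, together with a sharp estimate of its size in $\delta_\alpha$.

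It then remains to reduce to a finite-dimensional variational problem. I would form the reduced energy $F(\xi,\delta) := \mathcal{E}_g(W_\alpha + \phi_\alpha)$, where $\mathcal{E}_g$ is the Yamabe functional, and show that a critical point of $F$ in $(\xi,\delta) \in M \times (0,\infty)$ yields a true solution of \eqref{IntroEq1}. The key analytic step is the asymptotic expansion of $F$ as $\delta \to 0$: the leading term is $Y(M,[g])^{n/2} + Y(\S^n,[g_{std}])^{n/2}$, coming respectively from $u_0$ and from the bubble, and the corrections are governed by the interaction integrals. The dominant correction combines a Weyl-type contribution $|\Weyl_g(\xi_0)|^2$ (fed in by the perturbation $h$) with an $u_0(\xi_0)$-type contribution reflecting the presence of the non-degenerate background solution. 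Unlike in \cite{Brendle,BrendleMarques} where a maximum in $\delta$ is enough, the critical point here is of \emph{saddle} type, and in dimensions as low as $n=11$ the second-order expansion fails to detect it: one must push the computation to fourth order in $\delta$ to see the balance between the Weyl term and the $u_0$ term and to exhibit a genuine, isolated critical point.

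The main obstacle I expect is precisely this fourth-order reduction. It requires a finer control on $\phi_\alpha$ than in the positive case and a careful bookkeeping of the interaction integrals between $u_0$, the perturbation tensor $h$, and the rescaled bubble, since the competing terms are of the same order in $\delta$. One must tune $h$ so that the Weyl and $u_0$ contributions combine with the correct signs to produce a non-degenerate saddle point surviving all perturbative errors. Once such a critical point $(\xi_\alpha,\delta_\alpha)$ is produced, the corresponding $u_\alpha$ solves \eqref{IntroEq1}, blows up at $\xi_0$ since $\|B_\alpha\|_{L^\infty} \to \infty$, and a direct inspection of the expansion of $F$ gives that $E(u_\alpha) \nearrow Y(M,[g])^{n/2} + Y(\S^n,[g_{std}])^{n/2}$, which is the required convergence.
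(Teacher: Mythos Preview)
Your overall architecture (Lyapunov--Schmidt reduction, ansatz $u_0-B$, saddle-type critical point) matches the paper, but there are two concrete gaps that would make the argument fail as written.

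\textbf{The metric perturbation cannot be a single fixed bump.} You propose $g=\exp(h)\hat g$ with $h$ supported near one point $\xi_0$ and $\Weyl_g(\xi_0)\neq0$ fixed. With this choice the reduced energy has the shape
\[
F(\xi,\delta)\;\sim\;-C\,|\Weyl_g(\xi)|^2\,\delta^4+C'\,u_0(\xi)\,\delta^{\frac{n-2}{2}}+\ldots,
\]
and for $n\ge11$ the balance in $\delta$ occurs at a \emph{fixed} positive $\delta_0(\xi)\sim\big(|\Weyl_g(\xi)|^2/u_0(\xi)\big)^{2/(n-10)}$. This produces a solution at a fixed scale, not a blowing-up sequence. (Equivalently: by Theorem~\ref{Th2}, for $n\ge11$ any blow-up at the minimal level must occur at points where $\Weyl_g$ vanishes.) The paper resolves this by building into the single metric $g$ an \emph{infinite} family of disjoint perturbations centered at points $y_k\to x_0$ with amplitudes $\eps_k\to0$ (see \eqref{deftg}), and then placing the $k$-th bubble near $y_k$ with concentration $\mu_k:=\eps_k^{4/(n-10)}$. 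After rescaling by $\mu_k$ the reduced problem becomes $k$-independent with a genuine interior critical point at fixed $(t_0,0)$, yielding $\mu_k t_0\to0$ and hence blow-up.

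\textbf{The fourth-order expansion is in the translation variable, not the dilation.} You write that one must go to fourth order in $\delta$. In fact (Lemma~\ref{lemmez0}) the scale variable $t$ already has a nondegenerate minimum from the balance of the $t^4$ Weyl term against the $t^{(n-2)/2}$ interaction with $u_0$; second order in $t$ suffices. The delicate point is the \emph{center} variable $z$: Proposition~\ref{propder2} shows $\partial^2_{z}F(t,0)=0$, and one must compute the fourth-order terms in $z$ (Propositions~\ref{propF1} and~\ref{propF3}, culminating in \eqref{Forder4}) to see the saddle. This is also why the ansatz must include the correction $R_{k,t,z}$ of Lemma~\ref{lemmeR} solving \eqref{eqRk}: without it the error is only $O(\mu_k^{(n-2)/4})$ rather than $O(\mu_k^{(n+2)/4})$, so $\|\varphi_k\|^2$ is of the same order as the leading terms in the energy expansion and the fourth-order computation in $z$ is polluted. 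The term $F_3(t,z)=-\tfrac12\int|\nabla R_{t,z}|^2$ coming from this correction is essential to the cancellation in Proposition~\ref{propder2} and to the final sign in \eqref{Forder4}.
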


The metric $g$ in Theorem \ref{Th1} can be chosen arbitrarily close to the original metric $\hat{g}$ in $C^p(M)$ for any $p \ge 3$. Theorem \ref{Th1} is proven in Sections \ref{secLS} and \ref{secreduced}. In the rest of this section, we prove that spherical space forms and their locally conformally flat perturbations are $Y$-non-degenerate and that Theorem \ref{Th0} follows from Theorem \ref{Th1}. The first result is as follows:

\begin{proposition}\label{YND}
Let $n \ge 3$ and let $\Gamma$ be a finite subgroup of isometries of $(\mathbb{S}^n, g_{std})$, $\Gamma \neq \{Id\}$ acting freely and smoothly on $\mathbb{S}^n$. The quotient manifold $M_\Gamma:= \mathbb{S}^n/\Gamma$ endowed with the quotient metric $g_\Gamma$ is locally conformally flat and Y-non-degenerate in the sense of Definition \ref{nondege}.
\end{proposition}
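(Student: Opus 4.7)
Local conformal flatness of $(M_\Gamma, g_\Gamma)$ follows immediately from the fact that the covering projection $\pi : (\mathbb{S}^n, g_{std}) \to (M_\Gamma, g_\Gamma)$ is a local isometry and $(\mathbb{S}^n, g_{std})$ is locally conformally flat (via stereographic projection). The scalar curvature satisfies $\Scal_{g_\Gamma} \equiv n(n-1)$, so the positive constant $u_0 := (c_n \Scal_{g_\Gamma})^{(n-2)/4}$ solves the Yamabe equation \eqref{IntroEq4}. A test function computation with the constant yields $Y(M_\Gamma, [g_\Gamma]) \le |\Gamma|^{-2/n} Y(\mathbb{S}^n, [g_{std}])$, and the identity $J_{g_{std}}(u \circ \pi) = |\Gamma|^{2/n} J_{g_\Gamma}(u)$ for the Yamabe quotient, applied to lifts of arbitrary competitors, gives the reverse inequality. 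Hence $u_0$ attains $Y(M_\Gamma, [g_\Gamma])$ (up to the $L^{2^*}$ normalization required by \eqref{IntroEq4}).

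To prove non-degeneracy of $u_0$, I would compute the linearization of the Yamabe operator at $u_0$: using $u_0^{2^*-2} = c_n \Scal_{g_\Gamma}$, it takes the form $L v = \Delta_{g_\Gamma} v - (2^*-2) c_n \Scal_{g_\Gamma} v = \Delta_{g_\Gamma} v - n v$. Its kernel consists of eigenfunctions of $\Delta_{g_\Gamma}$ for the eigenvalue $n$, which pull back to $\Gamma$-invariant eigenfunctions of $\Delta_{g_{std}}$ on $\mathbb{S}^n$ for the same eigenvalue. The $n$-eigenspace on the round sphere is spanned by restrictions of linear functions $x \mapsto \langle v, x \rangle$, $v \in \mathbb{R}^{n+1}$, and $\Gamma$-invariance forces $v$ to be fixed by every element of $\Gamma \subset O(n+1)$. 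Since $\Gamma$ acts freely on $\mathbb{S}^n$, any such nonzero $v$ would produce a fixed point $v/|v| \in \mathbb{S}^n$, a contradiction. Thus $\textrm{Ker}(L) = \{0\}$.

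For the stability condition in Definition \ref{nondege}, the Implicit Function Theorem applied to the map $(g_*, u) \mapsto \Delta_{g_*} u + c_n \Scal_{g_*} u - u^{2^*-1}$ produces, for every $g_*$ close to $g_\Gamma$ in $C^3$, a unique positive $u_* \in C^2(M_\Gamma)$ close to $u_0$ that solves \eqref{ustar}. I would argue by contradiction: if $u_*$ failed to attain $Y(M_\Gamma, [g_*])$ along some sequence $g_k \to g_\Gamma$, there would exist Yamabe minimizers $\tilde u_k$ for $g_k$ not converging to $u_0$ in $C^2$. The family $(\tilde u_k)$ is bounded in $H^1$ with $\int_M |\tilde u_k|^{2^*} dv_{g_k} \to Y(M_\Gamma, [g_\Gamma])^{n/2}$, which is strictly smaller than $Y(\mathbb{S}^n, [g_{std}])^{n/2}$ because $|\Gamma| > 1$. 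A Struwe-type $H^1$ decomposition adapted to the varying metrics then precludes concentration (each bubble would contribute at least $Y(\mathbb{S}^n, [g_{std}])^{n/2}$ to the energy), so $(\tilde u_k)$ converges strongly in $H^1$, and then in $C^2$ by elliptic regularity for \eqref{ustar}, to a positive Yamabe minimizer on $(M_\Gamma, g_\Gamma)$. Lifting this minimizer to $\mathbb{S}^n$ produces a $\Gamma$-invariant positive extremal for the Yamabe quotient on the round sphere, which by Obata's classification is an Aubin--Talenti bubble; freeness of the $\Gamma$-action forces the bubble to be constant, and the only positive constant solving \eqref{IntroEq4} on $M_\Gamma$ is $u_0$. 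For large $k$, the uniqueness part of the Implicit Function Theorem then gives $\tilde u_k = u_*$, the desired contradiction.

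The main obstacle is the last step: reconciling a varying-metric convergence argument for minimizers with the rigid uniqueness of positive solutions on $(M_\Gamma, g_\Gamma)$. The key analytic inputs are the strict Yamabe inequality $Y(M_\Gamma, [g_\Gamma]) < Y(\mathbb{S}^n, [g_{std}])$, Obata's classification of Yamabe extremals on the round sphere, and freeness of the $\Gamma$-action, which together exclude both bubbling in the limit and the existence of nonconstant $\Gamma$-invariant extremals.
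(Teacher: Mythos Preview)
Your proposal is correct and follows essentially the same route as the paper: show the constant $u_0$ is a non-degenerate Yamabe minimizer via the eigenvalue-$n$ argument, then run a contradiction argument using Struwe's decomposition below the sphere energy to force any competing sequence of minimizers to converge to $u_0$, contradicting the local uniqueness from the Implicit Function Theorem. The only cosmetic difference is that the paper invokes Obata's theorem directly on the Einstein manifold $(M_\Gamma,g_\Gamma)$ to get uniqueness of the positive solution (hence the limit must be $u_0$), whereas you lift the limit to $\mathbb{S}^n$ and apply the classification of extremals there together with freeness of $\Gamma$; these are two equivalent ways of reading the same rigidity.
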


\begin{proof}
By definition, ($M_\Gamma$,$g_\Gamma$) is locally isometric to $(\mathbb{S}^n, g_{std})$, hence it is locally conformally flat, has constant scalar curvature equal to $n(n-1)$ and is thus of positive Yamabe type. 

The constant function $u_0 \equiv \( n(n-2)/4\)^{\frac{n-2}{4}}$ is a solution of the Yamabe equation on $(\mathbb{S}^n, g_{std})$, 
so $u_0$ descends to $M_\Gamma$ as a constant positive solution, that we still denote by $u_0$, of the Yamabe equation
\ben \label{Mg}
 \triangle_{g_\Gamma} u_0 + \frac{n(n-2)}{4} u_0 = u_0^{2^*-1} \quad \textrm{in } M_\Gamma.
 \een
We claim that the linearized operator at $u_0$ in $M_\Gamma$, which is given by $L = \triangle_{g_{\Gamma}} - n$, has zero kernel. Indeed, if $L \vp = 0$ for some $\vp \in H^1(M_\Gamma)$ then $\vp$ lifts to $\mathbb{S}^{n}$ as a function $\tilde \vp \in H^1(\mathbb{S}^n)$ which is $\Gamma$-invariant and satisfies 
$ \triangle_{g_{std}} \tilde \vp- n \tilde \vp = 0$. But the kernel of $\triangle_{g_{std}} - n $ in $\mathbb{S}^n$ consists of the restrictions of the coordinate functions $(x_i)_{0 \le i \le n}$ of $\R^{n+1}$ to $\mathbb{S}^{n}$, that are not $\Gamma$-invariant. Hence $\vp \equiv 0$ and $u_0$ is a non-degenerate solution of the Yamabe equation in $M_\Gamma$. This proves the first point in Definition \ref{nondege}.

We now prove the second point. Since $(M_\Gamma, g_\Gamma)$ is Einstein, a celebrated theorem of Obata \cite{Obata} shows that $u_0$ is the only positive solution of \eqref{Mg}, so in particular $u_0$ satisfies
\[ \int_{M_\Gamma} u_0^{2^*} dv_{g_{\Gamma}} = Y(M_\Gamma, [g_{\Gamma}])^{\frac{n}{2}} = \frac{1}{|\Gamma|} Y(\mathbb{S}^n, [g_{std}])^{\frac{n}{2}}. \]
By the Implicit Function Theorem there is $\eta_0 >0$ such that, for any $\eps$ small enough and any metric $g$ on $M_\Gamma$ such that $\Vert g - g_{\Gamma} \Vert_{C^3(M_\Gamma)}\le \eps$, there is a unique positive function $u_g \in C^2(M_\Gamma) \cap B_{H^1(M_\Gamma)}(u_0, \eta_0)$ satisfying 
\ben \label{Yg}
 \triangle_g u_g + c_n \Scal_g u_g = u_g^{2^*-1}\quad\text{in }M.
 \een
We also have $\Vert u_g - u_0 \Vert_{C^2(M_\Gamma)} \le C \eps $ for some $C>0$ independent of $\eps$, so $u_g$ is still non-degenerate for $\eps$ small enough. We claim that for $\eps$ small enough, we again have 
\[ \int_{M_\Gamma} u_g^{2^*} dv_g = Y (M_\Gamma,[g])^{\frac{n}{2}} \]
for any $g$ with $\Vert g - g_{\Gamma}\Vert_{C^3(M_\Gamma)} \le \eps$. Assume by contradiction that, for a sequence of real numbers $(\eps_j)_{j\in\N}$ such that $\eps_j \to 0$ as $j\to\infty$ and a sequence of metrics $(g_j)_{j\in\N}$ with $\Vert g_j - g_{\Gamma} \Vert_{C^3(M_\Gamma)} \le \eps_j$, the latter equality does not hold. Then
\[  \int_{M_\Gamma} u_j^{2^*} dv_{g_j} > Y(M_\Gamma, [g_j])^{\frac{n}{2}} \quad\forall j\in\N,\]
where we have let $u_j := u_{g_j}$. Since $g_j \to g_{\Gamma}$ in $C^3(M_\Gamma)$, we have $Y(M_\Gamma, [g_j]) \to Y(M_\Gamma, [g_{\Gamma}]) < Y(\mathbb{S}^n,[g_{std}])$ and thus, by the resolution of the Yamabe problem, there exists a sequence $(v_j)_{j\in\N}$ of positive solutions of \eqref{Yg} with $g = g_j$ satisfying 
\ben \label{Yg2}
 \int_{M_\Gamma} v_j^{2^*} dv_{g_j} = Y(M_\Gamma, [g_j])^{\frac{n}{2}} \le \frac{1}{|\Gamma|}Y(\mathbb{S}^n, [g_{std}])^{\frac{n}{2}} + \smallo\(1\)
 \een
as $j \to  \infty$. In particular, $u_j \neq v_j$ for all $j\in\N$. Struwe's $H^1$-compactness result \cite{Struwe} together with \eqref{Yg2} show that the sequence $(v_j)_{j\in\N}$ strongly converges in $H^1(M_\Gamma)$ as $j \to \infty$ towards a positive solution of \eqref{Mg}. By the uniqueness result of Obata \cite{Obata}, $v_j \to u_0$, and hence the local uniqueness shows that $v_j = u_j$ for large $j$, a contradiction.
\end{proof}

\begin{proof}[Proof of Theorem \ref{Th0}.]
Thanks to Proposition \ref{YND}, we can apply Theorem \ref{Th1} to $(M,\hat{g}) = (M_\Gamma, g_\Gamma)$ for some finite subgroup $\Gamma \neq \{Id\}$ of isometries of $(\mathbb{S}^n, g_{std})$ acting freely and smoothly on $\mathbb{S}^n$. We can then let $g$ be as in the statement of Theorem \ref{Th1}. By definition of $E(M, [g])$, Theorem \ref{Th1} then gives 
\[ E(M, [g]) \le  Y(M,[g])^{\frac{n}{2}} + Y(\mathbb{S}^n,[g_{std}])^{\frac{n}{2}}. \]
The other inequality follows from Proposition \ref{Pr1} below, which concludes the proof of Theorem \ref{Th0}.
\end{proof}

The arguments developed in the proof of Proposition \ref{YND} similarly show that if a metric $g$ in $M$ attains $Y(M,[g])$ at a unique positive minimizer $u_0 $, that is also non-degenerate in the sense of \eqref{u0nondeg}, then $(M,g)$ is $Y$-non-degenerate. With this observation, we can prove the following result  that provides additional $Y$-non-degenerate examples to which Theorem \ref{Th1} applies:

\begin{proposition} \label{propgamma}
 Let $n \ge 3$, let $\Gamma$ be a finite subgroup of isometries of $(\mathbb{S}^n, g_{std})$, $\Gamma \neq \{Id\}$ acting freely and smoothly on $\mathbb{S}^n$ and let $g$ be a locally conformally flat metric in $M_\Gamma$, $\Gamma \neq \{Id\}$, that is close to $g_\Gamma$ in $C^p(M_\Gamma)$ for some $p$ large enough. Then $g$ is $Y$-non-degenerate. 
\end{proposition}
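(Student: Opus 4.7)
The plan is to invoke the sufficient criterion highlighted just above the statement: to prove that $g$ is $Y$-non-degenerate, it suffices to exhibit a positive function $u_g$ that is \emph{the unique} positive minimizer attaining $Y(M_\Gamma,[g])$ and that is non-degenerate in the sense of \eqref{u0nondeg}. Note that the locally conformally flat assumption in the statement plays no role in the $Y$-non-degeneracy argument itself; it is imposed only to place $g$ in the class of metrics to which Theorem~\ref{Th1} can be applied.

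First I would produce the candidate $u_g$ by the Implicit Function Theorem. By the proof of Proposition~\ref{YND}, the constant function $u_0\equiv (n(n-2)/4)^{(n-2)/4}$ is a non-degenerate positive solution of the Yamabe equation for $g_\Gamma$. Hence, exactly as in that proof, there exist $\eta_0>0$ and $\eps>0$ such that for every metric $g$ with $\Vert g-g_\Gamma\Vert_{C^3(M_\Gamma)}\le\eps$ there is a unique positive $u_g\in C^2(M_\Gamma)\cap B_{H^1(M_\Gamma)}(u_0,\eta_0)$ solving the Yamabe equation for $g$, with $\Vert u_g-u_0\Vert_{C^2(M_\Gamma)}\le C\eps$. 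Continuity of the spectrum of the linearized operator under $C^2$ perturbations of the coefficients then guarantees that $u_g$ is itself non-degenerate, provided $\eps$ is small enough.

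Next I would show that, shrinking $\eps$ further if necessary, $u_g$ is the unique positive minimizer attaining $Y(M_\Gamma,[g])$. This step is a verbatim repetition of the argument already used in the proof of Proposition~\ref{YND}: arguing by contradiction along a sequence $g_j\to g_\Gamma$ in $C^3$, one obtains minimizers $v_j$ distinct from $u_{g_j}$ with
\[ \int_{M_\Gamma} v_j^{2^*}\, dv_{g_j} = Y(M_\Gamma,[g_j])^{n/2} \longrightarrow \frac{1}{|\Gamma|}Y(\S^n,[g_{std}])^{n/2} < Y(\S^n,[g_{std}])^{n/2}. \]
Struwe's $H^1$-decomposition theorem \cite{Struwe} then forces $(v_j)$, up to a subsequence, to converge strongly in $H^1(M_\Gamma)$ to a positive solution of \eqref{Mg}, which by Obata's theorem \cite{Obata} must equal $u_0$. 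The local uniqueness part of the Implicit Function Theorem finally yields $v_j = u_{g_j}$ for $j$ large, a contradiction. Combining this uniqueness with the non-degeneracy obtained in the previous step and applying the preliminary observation concludes the proof.

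I do not foresee a serious obstacle; the argument is essentially a rerun of the proof of Proposition~\ref{YND}. The one subtlety worth flagging is that for a generic $g$ near $g_\Gamma$ one can no longer invoke Obata directly (since $g$ need not be Einstein), so uniqueness is obtained only among \emph{minimizers} rather than among all positive solutions of the Yamabe equation for $g$; this is, however, exactly what the quoted sufficient criterion asks for.
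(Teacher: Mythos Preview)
Your proof is correct but follows a different and more elementary route than the paper's. The paper invokes the uniqueness theorem of de Lima--Piccione--Zedda \cite{LimaPiccioneZedda}, which requires compactness of sequences of \emph{all} positive Yamabe solutions for metrics near $g_\Gamma$; this compactness is supplied by a local positive-mass argument (together with the Schoen \cite{Schoenlcf} and Li--Zhu \cite{LiZhu} machinery) that genuinely uses the locally conformally flat hypothesis, and the payoff is the stronger conclusion that the Yamabe equation for $g$ has a unique \emph{positive solution}. Your argument, by contrast, only establishes uniqueness among \emph{minimizers}: since minimizers carry energy strictly below $Y(\S^n,[g_{std}])^{n/2}$, Struwe's decomposition alone forces them to converge to $u_0$, and then Obata plus the local uniqueness from the Implicit Function Theorem close the contradiction---with no curvature hypothesis needed. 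As you correctly observe, this weaker conclusion already suffices to trigger the sufficient criterion stated just before the proposition, so in your approach the locally conformally flat assumption becomes superfluous for the $Y$-non-degeneracy itself and is retained only so that $g$ lies in the scope of Theorem~\ref{Th1}.
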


\begin{proof}
This is a consequence of the uniqueness result of de Lima--Piccione--Zedda \cite{LimaPiccioneZedda} (Theorem $5$). We provide some additional details here since the result of \cite{LimaPiccioneZedda} is not stated in this way. The analysis in \cite{LimaPiccioneZedda} leading to Theorem $5$ applies as long as sequences of Yamabe metrics close to $g_\Gamma$ can be made to converge strongly up to a subsequence. This is the case for sequences of locally conformally flat metrics $(g_k)_{k\in\N}$ on $M_\Gamma$ close to $g_\Gamma$ in  $C^p(M_\Gamma)$ for some $p$ large enough. Local arguments indeed show that any metric $g_k$ has positive Riemannian mass, with a positive uniform bound from below, at every point of $M_\Gamma$. This is a purely local argument that works for any dimension $n \ge 3$ and does not rely on the positive mass theorem. The convergence of $(g_k)_{k\in\N}$ up to a subsequence then follows from the arguments of Schoen \cite{Schoenlcf} and Li--Zhu \cite{LiZhu}. Hence  for any metric $g$ on $M_\Gamma$ with $|\Gamma| \ge 2$ that is locally conformally flat and $C^{p}$-close to $g_\Gamma$ for $p$ large enough, the equation \eqref{Yg} has a unique positive solution $u_g$. This solution $u_g$ remains non-degenerate for $g$ close enough to $g_\Gamma$.
\end{proof}

\begin{remark}
A natural question connected with Definition \ref{nondege}, and with the observation before Proposition \ref{propgamma}, is whether it is possible that the Yamabe equation admits multiple minimizers. This is indeed the case. Examples of such situations can be obtained by considering manifolds with a nontrivial isometry group. In particular, Schoen \cite{Schoenlcf} gave a detailed study of the multiplicity of positive solutions to the Yamabe equation in the case of the product manifold $\S^1\(r\)\times\S^{n-1}$ with $r>0$. In this case, if $r$ is chosen large enough, then there exists a family of distinct (degenerate) minimizers parametrized by $\S^1$. This example can be extended to more general manifolds with different isometry groups (see Hebey--Vaugon \cite{HebeyVaugon}).
\end{remark}

\section{Proof of Theorem \ref{Th1} -- Part $1$: A Lyapunov--Schmidt reduction} \label{secLS}

\subsection{The geometric setting}

In this section and the next, we prove Theorem \ref{Th1}. Throughout the paper, we denote by $\delta_0$ the Euclidean metric in $\R^n$. 

\smallskip
Let $(M,\hat{g})$ be a smooth, closed, locally conformally flat Riemannian manifold of positive Yamabe type that is $Y$-non-degenerate in the sense of Definition \ref{nondege}. In this section and the next, we always assume that $n:= \text{dim}(M)\ge 11$. Fix $x_0 \in M$ once and for all, and let $\delta >0$ and $\vp \in C^{\infty}(B_{\hat{g}}(x_0, 8 \delta))$, $\vp>0$, be such that $g_0 = \vp^{2^*-2} \hat{g}$ is flat in $B_{\hat{g}}(x_0, 8 \delta)$. By decreasing $\delta$ if necessary and picking a local chart $\Phi$ that sends $x_0$ to $0$, we can assume that $B_{\hat{g}}(x_0, 6 \delta)$ contains $\Phi^{-1}(B(0, 4\delta))$ and that $\Phi_*g_0$ is the Euclidean metric in $B(0, 4 \delta) \subset \R^n$, where $B(0, 4 \delta)$ is an Euclidean ball. 

\smallskip
For any $k\in\N$, we let $y_k = ( \delta/k, 0, \dotsc, 0) \in \R^n$ and we let $(r_k)_{k\in\N}$  be a decreasing sequence of positive numbers converging to $0$ such that $r_0 \le \delta$ and $4 r_k \le |y_k - y_{k+1}|$ for all $k\in\N$. Any two Euclidean balls $B(y_k, 2r_k)$ and $B(y_\ell, 2 r_\ell)$ are thus disjoint for $k \neq \ell$. Let $(\eps_k)_{k\in\N}$ be a sequence of positive numbers converging to $0$ such that  $\eps_k = o(r_k^p)$ for any $p \ge 1$. Define, for any $k\in\N$, 
\ben \label{defmuk}
 \mu_k = \eps_k^{\frac{4}{n-10}} .\een
Since $n \ge 11$ and $\eps_k = o(r_k^p)$, we have $\mu_k = o(r_k^p)$ as $k \to  \infty$ for any $p \ge 1$. Let $h$ be a smooth, symmetric bilinear form in $\R^n$ that satisfies 
\ben \label{hyph1}
 \textrm{tr}\,h(x)= 0, \quad  \textrm{div}\,h(x)_i = 0 \quad \text{ and }  \sum_{j=1}^n x_j h_{ij} (x)= 0 
 \een 
 for all  $x\in \R^n$ and $1 \le i \le n$, where we have let $\textrm{tr}\,h := \sum_{j=1}^n h_{jj}$ and $\textrm{div}\,h_i := \sum_{j=1}^n \partial_j h_{ji}$. These are respectively the trace and the divergence of $h$ with respect to the Euclidean metric $\delta_0$, but the subscript $\delta_0$ will be omitted for clarity.  We assume that for any $1 \le i,j \le n$, $x \mapsto h_{ij}(x)$ is a homogeneous polynomial of second-order in $\R^n$. 
Examples of such $h$ satisfying \eqref{hyph1} are given in \eqref{defh} below. Let $\chi \in C^{\infty}_c(\R)$ be such that $\chi \equiv 1$ on $[0,1]$ and $\chi \equiv 0 $ on $\R \backslash [0,2]$. We define a new metric in $B(0, 4 \delta)$ by
\ben \label{deftg}
 \tilde g (x):= \exp \( \sum_{k=1}^\infty \eps_k  \chi \(\frac{|x-y_k|}{r_k} \) h\(x-y_k\)  \). 
 \een
We assume in addition that $\sum_{k \in \mathbb{N}} \varepsilon_k r_k^{-p} < + \infty$ for all $p \ge 0$. Since 
the components of $h$ are homogeneous polynomials of second order, $\tilde g $ is thus a smooth metric in $B(0, 4 \delta)$, is flat in $B(0, 4 \delta) \backslash B(0, 3 \delta)$ and satisfies
\ben \label{deftg2}
 \tilde g (x)= \exp \( \eps_k  \chi \(\frac{|x-y_k|}{r_k} \) h\(x-y_k\)  \) 
 \een
in $B(y_k, 2 r_k)$ for any $ k\in\N$. When extended and pulled back to $M$, $\Phi^*\tilde g$ defines a metric in $B_{\hat{g}}(x_0, 8 \delta)$, equal to $g_0$ in $B_{\hat{g}}(x_0, 8 \delta) \backslash B_{\hat{g}}(x_0, 6 \delta)$. The metric $g = \vp^{2 - 2^*}\Phi^* \tilde{g}$ thus defines a smooth metric in $B_{\hat{g}}(x_0, 8 \delta)$, equal to the original metric $\hat{g}$ in $B_{\hat{g}}(x_0, 8 \delta) \backslash B_{\hat{g}}(x_0, 6 \delta)$, that we extend to be equal to $\hat{g}$ on $M\backslash B_{\hat{g}}(x_0, 8 \delta) $. We still call this new metric $g$. We now define $\check{g} = \vp^{2^*-2} g$, which is a smooth metric in $M$ such that $\Phi_* \check{g} = \tilde g$ as in \eqref{deftg} in $B(0, 4 \delta)$.  By \eqref{hyph1}, we have $\det \tilde g \equiv 1 $ in $B(0, 4 \delta)$. Note that $\tilde g$ can be chosen to be arbitrarily close to the Euclidean metric $\delta_0$ in $C^p(B(0,4\delta))$ for any $p \ge 3$ by assuming that  $\sum_{k \in \mathbb{N}} \varepsilon_k r_k^{-p} $ is small enough. Hence $g$ can be chosen arbitrarily close to the initial metric $\hat{g}$ in $C^p(M)$ for any $p \ge 3$.

\smallskip
Since $\hat{g}$ is $Y$-non-degenerate, so is $g_0 = \vp^{2^*-2} \hat{g}$, and we can let $\hat{u}_0$ be a non-degenerate positive solution of 
\[ \triangle_{g_0} \hat{u}_0 + c_n \Scal_{g_0} \hat{u}_0 = \hat{u}_0^{2^*-1}\quad\text{in }M,\]
that also satisfies 
$$ \int_M \hat{u}_0^{2^*} dv_{g_0} = Y(M,[g_0])^{\frac{n}{2}} =  Y(M,[\hat{g}])^{\frac{n}{2}}.$$
Definition \ref{nondege} then yields the existence of a unique positive function $\check{u}_0 \in C^2(M)$ solving 
\ben \label{deftu0}
 \triangle_{\check g} \check{u}_0 + c_n \Scal_{\check g} \check{u}_0 = \check{u}_0^{2^*-1}\quad\text{in }M,
 \een
that is still non-degenerate in the sense of \eqref{u0nondeg} and satisfies 
\[\int_M \check{u}_0^{2^*} dv_{\check g} = Y(M,[\check{g}])^{\frac{n}{2}} = Y(M,[g])^{\frac{n}{2}}.\] 
In the rest of this section and in the following one, we construct, when $n \ge 11$, a sequence of sign-changing solutions $(u_k)_{k\in\N}$ of class $C^{3,\alpha}(M)$, $0 < \alpha \le 2^*-2$, to
\ben \label{ynodal}
 \triangle_{\check g} u_k + c_n \Scal_{\check g} u_k = |u_k|^{2^*-2} u_k\quad\text{in }M,\een
where $c_n := \frac{n-2}{4(n-1)}$, that satisfies
\[ \int_M |u_k|^{2^*} dv_{\check{g}}  \nearrow Y(M,[\check g])^{\frac{n}{2}} + Y(\mathbb{S}^n,[g_{std}])^{\frac{n}{2}} . \]
Since $\check{g} = \vp^{2^*-2} g$, and by the conformal invariance of the conformal Laplacian, by replacing $u_k$ with  $ \vp u_k$, this will prove Theorem \ref{Th1}.

\subsection{The \emph{ansatz} of the construction}

Define $D^{1,2}(\R^n)$ to be the completion of $C^\infty_c(\R^n)$ for the norm $u \mapsto \Vert \nabla u \Vert_{L^2(\R^n)}$, that we endow with the associated scalar product. We fix $A >0$ to be chosen later, and for $(t,z)\in [1/A,A] \times B(0,1)$, we let  
\ben \label{defsuites}
\mu_k(t) := \mu_k t\quad\text{and} \quad  \xi_k(z) := y_k + \mu_k z,
\een
where $y_k$ is as in the previous subsection, and for $x \in \R^n$,
\[ B_{k,t,z}(x) = \frac{\mu_k (t)^{\pui}}{\(\mu_k( t)^2 + \frac{|x-\xi_k(z)|^2}{n(n-2)}\)^{\pui}}. \] 
For any $(t,z) \in  [1/A,A]\times B(0,1)$, $B_{k,t,z}$ solves 
\[ \triangle_{\delta_0} B_{k,t,z} = B_{k,t,z}^{2^*-1}\quad\text{in }\R^n, \]
where $\delta_0$ is the Euclidean metric in $\R^n$ and $\triangle_{\delta_0} := - \textrm{div}_{\delta_0}\nabla$. For $x \in \R^n$, we let 
\ben \label{defVi}
V_0(x) := \frac{\frac{|x|^2}{n(n-2)} - 1}{\(1 + \frac{|x|^2}{n(n-2)}\)^{\frac{n}{2}}} \quad  \textrm{ and, for  }  1 \le j \le n, \quad  V_j(x): = \frac{x_j}{\(1 + \frac{|x|^2}{n(n-2)}\)^{\frac{n}{2}}} .
\een
We then let 
\ben \label{Z0}
 Z_{0,k,t,z}(x) ;= \mu_k(t)^{1 - \frac{n}{2}} V_0 \(\frac{x - \xi_k(z)}{\mu_k(t)}  \) =   \frac{2}{n-2} t \partial_t B_{k,t,z} \een
and, for $1 \le j \le n$, 
\ben \label{Zi}
 Z_{j,k,t,z}(x); =  \mu_k(t)^{1 - \frac{n}{2}} V_j \(\frac{x - \xi_k(z)}{\mu_k(t)}  \) =  - n t \partial_{z_j} B_{k,t,z} 
 ,\een
and we let 
\[ K_{k,t,z} ;= \textrm{span} \left\{ Z_{j,k,t,z}:\, 0 \le j \le n \right\},\]
which is a finite-dimensional subspace of $D^{1,2}(\R^n)$. We denote by $K_{k,t,z}^{\perp}$ its orthogonal complement in $D^{1,2}(\R^n)$. The functions $Z_{i,k,t,z}$ satisfy
\[ \triangle_{\delta_0} Z_{i,k,t,z} = (2^*-1) B_{k,t,z}^{2^*-2} Z_{i,k,t,z}\quad\text{in }\R^n \]
for all $0 \le j \le n$, and by a result of Rey \cite{Rey} and Bianchi-Egnell \cite{BianchiEgnell}, they form an orthogonal basis of the set of solutions of this equation in $D^{1,2}(\R^n)$. Letting $h$ be as in the previous subsection, we define, for $k\in\N$ and $x \in \R^n$,
\ben \label{defhk}
 h_k(x) := h \(x-y_k\).
 \een
 By \eqref{hyph1}, $h_k$ is trace-free and divergence-free in $\R^n$. As a first result, we obtain the following lemma:

\begin{lemma} \label{lemmeR}
For any $(t,z) \in [1/A, A] \times B(0,1)$ and $k\in\N$, there exists a unique $R_{k,t,z} \in K_{k,t,z}^{\perp}$ that satisfies
\ben \label{eqRk}
 \triangle_{\delta_0} R_{k,t,z} - (2^*-1) B_{k,t,z}^{2^*-2} R_{k,t,z} = - \eps_k \sum_{p,q=1}^n (h_k)_{pq} \partial_{pq}^2 B_{k,t,z} \quad\text{in }\R^n.
 \een
This function $R_{k,t,z}$ also satisfies, for $i\in\left\{0,1,2\right\}$,
\begin{equation}\label{estR}
|\nabla^i R_{k,t,z}(x)| \le C \eps_k \mu_k(t)^{\frac{n+2}{2}} \frac{\ln \( \frac{2 \mu_k(t) + |x - \xi_k(z)|}{\mu_k(t)} \)}{ \(\mu_k(t) + |x - \xi_k(z)| \)^{n-2+i}} 
\end{equation}
for all $x \in \R^n$, for some $C >0$ independent of $k,t,z$.
\end{lemma}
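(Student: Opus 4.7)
\emph{Existence and uniqueness.} The linearized operator $L_{k,t,z} := \triangle_{\delta_0} - (2^*-1) B_{k,t,z}^{2^*-2}$ acting on $D^{1,2}(\R^n)$ has kernel precisely $K_{k,t,z}$, by the classical non-degeneracy results of Rey and Bianchi--Egnell, and is self-adjoint, so its range equals the annihilator of $K_{k,t,z}$. Using the divergence-free condition on $h$, I rewrite the right-hand side in divergence form:
\[
\sum_{p,q=1}^n (h_k)_{pq} \partial^2_{pq} B_{k,t,z} = \mathrm{div}(h_k \nabla B_{k,t,z}),
\]
which defines a bounded linear functional on $D^{1,2}(\R^n)$ (the integrand decays polynomially at infinity fast enough to lie in $L^{(2^*)'}$ for $n \ge 11$). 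An application of the Lax--Milgram theorem on $K_{k,t,z}^{\perp}$ then yields a unique $R_{k,t,z} \in K_{k,t,z}^{\perp}$ satisfying \eqref{eqRk}, up to Lagrange multipliers in $K_{k,t,z}$ implicit in the Lyapunov--Schmidt framework and handled at later stages of the reduction.

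\emph{Rescaling and sharp decay of the right-hand side.} For the pointwise estimate, I rescale to the standard bubble: set $\mu := \mu_k(t)$, $\xi := \xi_k(z)$, and $\tilde R(y) := \mu^{(n-2)/2} R_{k,t,z}(\xi + \mu y)$. Using that each $h_{pq}$ is homogeneous of degree $2$, $\tilde R$ solves
\[
L_W \tilde R = -\eps_k \mu_k^2\, g(y), \quad g(y) := \sum_{p,q=1}^n h_{pq}(z + ty)\, \partial^2_{pq} W(y),
\]
where $L_W := \triangle - (2^*-1) W^{2^*-2}$ and $W(y) = (1 + |y|^2/(n(n-2)))^{-(n-2)/2} =: \psi(|y|)$. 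The crucial step is to establish $|g(y)| \lesssim (1+|y|)^{-n}$. Decomposing
\[
\partial^2_{pq} W(y) = \psi''(r)\frac{y_p y_q}{r^2} + \frac{\psi'(r)}{r}\Bigl(\delta_{pq} - \frac{y_p y_q}{r^2}\Bigr), \quad r = |y|,
\]
and using $\mathrm{tr}\, h = 0$, we obtain $g(y) = ((r\psi''(r) - \psi'(r))/r^3)\, \sum_{p,q} h_{pq}(z+ty)\, y_p y_q$. A direct computation gives $r\psi''(r) - \psi'(r) = (n(n-2))^{-1} r^3 (1 + r^2/(n(n-2)))^{-(n+2)/2}$, so the radial prefactor is $O((1+r)^{-(n+2)})$. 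Next, the tangential condition $\sum_j w_j h_{ij}(w) = 0$, combined with the symmetry of $h$, yields upon writing $w = z + ty$ and expanding $y_p y_q = t^{-2}(w_p - z_p)(w_q - z_q)$ the identity
\[
\sum_{p,q=1}^n h_{pq}(z+ty)\, y_p y_q = \frac{1}{t^2}\sum_{p,q=1}^n h_{pq}(z+ty)\, z_p z_q,
\]
since $\sum_{p,q} h_{pq}(w)w_p w_q$ and $\sum_{p,q} h_{pq}(w) w_p z_q$ both vanish. Since $|z| \le 1$, $t \in [1/A, A]$, and the components of $h$ are quadratic, this is bounded by $C(1+|y|)^2$. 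Combining both bounds gives $|g(y)| \lesssim (1+|y|)^{-n}$.

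\emph{Pointwise estimate via Green's function and conclusion.} The operator $L_W$ is a compact perturbation of the Laplacian (with potential $W^{2^*-2}$ decaying like $|y|^{-4}$), so its Green's function on $K^\perp$ has the asymptotic behavior $|y-y'|^{-(n-2)}$ at infinity. Convolving with the bound on $g$ produces
\[
|\tilde R(y)| \lesssim \eps_k \mu_k^2 \int_{\R^n}\frac{dy'}{|y-y'|^{n-2}\,(1+|y'|)^n} \lesssim \eps_k \mu_k^2 \frac{\log(2+|y|)}{(1+|y|)^{n-2}},
\]
the logarithm arising from the integration near the origin, $\int_{|y'|\le R}(1+|y'|)^{-n}\,dy' \sim \log R$. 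The derivative estimates follow from standard interior elliptic (Schauder) estimates applied on balls of radius comparable to $1+|y|$ (with the corresponding gain of $(1+|y|)^{-i}$ for $\nabla^i \tilde R$). Unwinding the rescaling via $R(x) = \mu^{-(n-2)/2}\tilde R((x-\xi)/\mu)$ and $(1+|y|) \sim (\mu + |x-\xi|)/\mu$ produces \eqref{estR}. The principal technical difficulty is the sharp decay $|g(y)| \lesssim (1+|y|)^{-n}$: without either the trace-free or the tangential condition on $h$ in \eqref{hyph1}, one obtains only $(1+|y|)^{-(n-2)}$, which through the Green's function would yield $|\tilde R(y)| \lesssim (1+|y|)^{-(n-4)}$, falling short of \eqref{estR} by a factor of $|y|^{-2}$ for large $|y|$. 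The careful interplay of these algebraic conditions with the radial structure of $W$ is therefore essential.
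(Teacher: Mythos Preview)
Your estimate argument (rescaling, exploiting the trace-free and tangential conditions on $h$ to get $|g(y)|\lesssim(1+|y|)^{-n}$, then Green's function bounds) is correct and is essentially the route the paper takes. The gap is in the existence part.

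First, Lax--Milgram does not apply: the bilinear form $a(u,v)=\int\nabla u\cdot\nabla v-(2^*-1)\int B_{k,t,z}^{2^*-2}uv$ is \emph{not} coercive on $K_{k,t,z}^\perp$. Indeed $B_{k,t,z}$ itself lies in $K_{k,t,z}^\perp$ (one checks $\int B_{k,t,z}^{2^*-1}Z_{j,k,t,z}=0$ by differentiating the scale-invariant quantity $\int B_{k,t,z}^{2^*}$), and $a(B_{k,t,z},B_{k,t,z})=(1-(2^*-1))\int|\nabla B_{k,t,z}|^2<0$. One must use Fredholm theory instead, as the paper does.

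Second, and more seriously, Fredholm solvability of $L_{k,t,z}R=f$ requires $f\perp K_{k,t,z}$, since the range of the self-adjoint operator $L_{k,t,z}$ is exactly $K_{k,t,z}^\perp$. You do not verify this, and your phrase ``up to Lagrange multipliers \dots\ handled at later stages'' is not a valid escape: the lemma asserts that \eqref{eqRk} holds \emph{exactly}, and the exact equation is used later (for instance in deriving \eqref{ener5}). The paper establishes this orthogonality by a short but nontrivial trick: setting $G_k(u)=\tfrac12\int\sum_{p,q}(h_k)_{pq}\,\partial_pu\,\partial_qu\,dx$, integrating by parts (using $\operatorname{div}h_k=0$), and combining the Hessian identity
\[
B_{k,t,z}\,\partial^2_{pq}B_{k,t,z}=-\tfrac{1}{n}(\text{radial})\,\delta_{pq}+\tfrac{n}{n-2}\,\partial_pB_{k,t,z}\,\partial_qB_{k,t,z}
\]
with $\operatorname{tr}h_k=0$ to get $G_k(B_{k,t,z})=-\tfrac{n}{n-2}G_k(B_{k,t,z})$, hence $G_k(B_{k,t,z})=0$. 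Differentiating this in $t$ and $z_j$ then gives $\int Z_{j,k,t,z}\sum_{p,q}(h_k)_{pq}\partial^2_{pq}B_{k,t,z}\,dx=0$ for all $j$, which is precisely the needed orthogonality. This step is missing from your argument and cannot be deferred.
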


\begin{proof}
Let $u \in C^2(\R^n)$ be such that $u \in L^2(\R^n)$, $(1+|x|)|\nabla u(x)| \in L^2(\R^n)$ and $(1+|x|)^2|\nabla^2 u(x)| \in L^2(\R^n)$.  Define then
\[ G_k(u) := \frac12 \int_{\R^n} \sum_{p,q=1}^n (h_k)_{pq} \partial_p u\,\partial_q u\,dx . \]
Integrating by parts and since $h_k$ is divergence free, we get that 
\[ G_k(u) = - \frac12 \int_{\R^n} \sum_{p,q=1}^n (h_k)_{pq} u\,\partial^2_{pq}u \,dx. \]
Choose now $u = B_{k,t,z}$ for $t >0$ and $z \in B(0,1)$, which is admissible for $G_k$ since $n \ge 11$. Explicit computations show that 
\[ B_{k,t,z} \,\partial_{pq}^2 B_{k,t,z}  = - \frac{1}{n} \frac{\mu_k(t)^{n-2} \delta_{pq}}{\( \mu_k(t)^2 + \frac{|x - \xi_k(z)|^2}{n(n-2)} \)^{n-1}} + \frac{n}{n-2} \,\partial_p B_{k,t,z} \,\partial_q B_{k,t,z}, \]
which gives, since $h_k$ is trace-free, $G_k(B_{k,t,z}) = - \frac{n}{n-2} G_k(B_{k,t,z})$, and so $G_k(B_{k,t,z}) = 0$. Differentiating with respect to $t$ and $z$, using \eqref{Z0} and \eqref{Zi} and integrating by parts shows that for any $j \in \{0, \dotsc, n\}$, 
\[ \int_{\R^n} Z_{j,k,t,z} \sum_{p,q=1}^n (h_k)_{pq} \,\partial_{pq}^2 B_{k,t,z} \,dx = 0 \]
holds. Since, by Bianchi--Egnell \cite{BianchiEgnell}, $\triangle_{\delta_0} - (2^*-1) B_{k,t,z}^{2^*-2}$ is Fredholm and injective on $K_{k,t,z}^{\perp}$ the existence of a unique $R_{k,t,z} \in K_{k,t,z}^{\perp}$ satisfying \eqref{eqRk} follows. By standard elliptic theory $R_{k,t,z}$ is smooth.  

By \eqref{hyph1}, $h_k$ is trace-free and satisfies $\sum_{j=1}^n (x-y_k)_j (h_k)_{ij}(x) = 0$ for all $1 \le j \le n$ and $x \in \R^n$. Direct computations then show that 
\[ \sum_{p,q=1}^n (h_k)_{pq} \,\partial_{pq}^2 B_{k,t,z} = - \frac{\mu_k^2}{n(n-2)} \sum_{p,q=1}^n(h_k)_{pq}z_p z_q \frac{ \mu_k(t)^{\pui}}{\(\mu_k(t)^2 + \frac{|\cdot-\xi_k(z)|^2}{n(n-2)} \)^{\frac{n+2}{2}}}. \]
It is then easily seen that, for all $x \in \R^n$, 
\ben \label{Rktz}
R_{k,t,z}(x) = \eps_k \mu_k^{3 - \frac{n}{2}} R_{t,z} \( \frac{x-y_k}{\mu_k}\)
\een
holds, where $R_{t,z}$ is the unique solution in $K_{t,z}^{\perp}$ of 
\ben \label{Rtz}
\triangle_{\delta_0} R_{t,z} - (2^*-1) B_{t,z}^{2^*-2} R_{t,z} = - \frac{t^{\pui}}{n(n-2)}  \frac{ \sum_{p,q=1}^n h_{pq}z_p z_q}{\(t^2 + \frac{|\cdot-z|^2}{n(n-2)} \)^{\frac{n+2}{2}}}\quad\text{in }\R^n,  
\een
where we have let 
\ben \label{Btz}
B_{t,z}(x) := t^{\pui} \(t^2 + \frac{|x-z|^2}{n(n-2)} \)^{- \pui}
\een
 and 
\[ K_{t,z} := \textrm{span} \left\{  \partial_t B_{t,z},\partial_{z_1} B_{t,z},\dotsc,\partial_{z_n} B_{t,z}\right\} . \]
Since $t \in [1/A,A]$ and $z \in B(0,1)$ and since the $h_{ij}$ are homogeneous of degree $2$, we can again write 
$$R_{t,z}(x) = t^{1-\frac{n}{2}}S_{0,z}\(\frac{x-z}{t}\)$$ 
for some smooth function $S_{0,z} \in K_{1,0}^{\perp}$ that satisfies
\[ \big| \triangle_{\delta_0} S_{0,z}(y) - (2^*-1) B_{1,0}(y)^{2^*-2} S_{0,z}(y)\big| \le C |z|^2 \(1+ |y|\)^{-n}  \]
for all $y \in \R^n$, for some $C>0$ independent of $t$ and $z$. We can now write a representation formula for $\triangle_{\delta_0} - (2^*-1)B_{1,0}^{2^*-2} $ (see for example Lemma 3.3 of Premoselli \cite{Premoselli12}) that shows that, for any $x \in \R^n$,
\[ |S_{0,z}(x)| \le C |z|^2 \int_{\R^n} |x-y|^{2-n} (1+ |y|)^{-n} dy \le C|z|^2 \frac{\ln(2+|x|)}{\(1+|x|\)^{n-2}} \]
for some constant $C>0$ independent of $z$. Differentiating the representation formula similarly yields
\[  |\nabla S_{0,z}(x)| \le C|z|^2 \frac{\ln(2+|x|)}{\(1+|x|\)^{n-1}}\quad\text{and}\quad |\nabla S_{0,z}(x)|^2 \le C|z|^2 \frac{\ln(2+|x|)}{\(1+|x|\)^{n}}\]
for any $x \in \R^n$. Going back to $R_{k,t,z}$, this proves \eqref{estR}.
\end{proof}

\smallskip
 Let again $\chi \in C^{\infty}_c(\R)$ be such that $\chi \equiv 1$ on $[0,1]$, $0 \le \chi \le 1$  and $\chi \equiv 0 $ on $\R \backslash [0,2]$. For $t \in [1/A,A] $ and $z \in B(0,1)$, we let 
 \ben \label{defU}
U_{k,t,z}(x): = \chi \( \frac{x-y_k}{r_k} \) \( B_{k,t,z}(x) + R_{k,t,z}(x) \)
\een
for any $x \in \R^n$, and 
 \ben \label{defW}
W_{k,t,z}(x): =  U_{k,t,z}\(\Phi(x) \)
- \check{u}_0(x)
\een
for any $x \in M$, where $\Phi$ is the chart around $x_0$ introduced in the previous subsection, and where $\check{u}_0$ is as in \eqref{deftu0}. 
We let 
\ben \label{deff}
f(s): = |s|^{2^*-2}s \quad \forall s \in \R.
\een We also let for $k\in\N$ and $(t,z) \in [1/A,A] \times B(0,1)$,
\ben \label{erreur}
 E_{k,t,z} := \(  \triangle_{\check g} + c_n \Scal_{\check g} \)W_{k,t,z} - f(W_{k,t,z}). \een
 Until the end of this section, $C$ will denote a positive constant independent of $k,t$ and $z$, that might change from one line to the other. 
 
 \begin{lemma}
For any $k\in\N$ and $(t,z) \in [1/A,A] \times B(0,1)$, we have
\ben \label{esterreur}
 \Vert E_{k,t,z} \Vert_{L^{\frac{2n}{n+2}}(M)} \le C \mu_k^{\frac{n+2}{4}} .
 \een
\end{lemma}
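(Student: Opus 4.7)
The function $E_{k,t,z}$ is supported in $\Phi^{-1}(B(y_k, 2r_k))$: outside this set $U_{k,t,z}\circ\Phi$ vanishes, so $W_{k,t,z} = -\check u_0$, and $(\triangle_{\check g}+c_n\Scal_{\check g})\check u_0 = \check u_0^{2^*-1}$ together with the oddness of $f$ makes both pieces of $E_{k,t,z}$ cancel. Working in chart coordinates and setting $v_0 := \check u_0\circ \Phi^{-1}$, I would split the support into three regions: the bubble core $\Omega_c := B(\xi_k(z),\sqrt{\mu_k(t)})$ where $B_{k,t,z}\gtrsim v_0$; the intermediate annulus $\Omega_a := B(y_k,r_k)\setminus \Omega_c$, where $\chi\equiv 1$ but $v_0\gtrsim B_{k,t,z}$; and the cutoff annulus $\Omega_t := B(y_k,2r_k)\setminus B(y_k,r_k)$. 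The intermediate scale $\sqrt{\mu_k(t)}$ is dictated by $B_{k,t,z} \sim v_0 \sim 1$ at $|x-\xi_k(z)| \sim \sqrt{\mu_k(t)}$.

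On $\Omega_c \cup \Omega_a$, $\chi\equiv 1$, so $U_{k,t,z} = B_{k,t,z}+R_{k,t,z}$. Using $\det \tilde g \equiv 1$, the expansion $\tilde g^{ij} = \delta^{ij}-\eps_k h_{k,ij}+O(\eps_k^2|h_k|^2)$, and the trace- and divergence-free properties \eqref{hyph1}, I would obtain
$$\triangle_{\tilde g} = \triangle_{\delta_0} + \eps_k\sum_{p,q} h_{k,pq}\,\partial^2_{pq} + O\bigl(\eps_k^2(|h_k|^2|\nabla^2|+|h_k||\nabla h_k||\nabla|)\bigr)$$
acting on smooth functions, and the linear-in-$\eps_k$ contribution to $\Scal_{\tilde g}$ vanishes, giving $\Scal_{\tilde g}(x) = O(\eps_k^2|x-y_k|^2)$. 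Combining this with $\triangle_{\delta_0}B_{k,t,z} = B_{k,t,z}^{2^*-1}$ and \eqref{eqRk} cancels the leading $\eps_k$-correction by design. Taylor-expanding $f(B_{k,t,z}+R_{k,t,z}-v_0)$ about $B_{k,t,z}+R_{k,t,z}$ on $\Omega_c$ and about $-v_0$ on $\Omega_a$, then collecting, the main pieces of $E_{k,t,z}$ become $(2^*-1)B_{k,t,z}^{2^*-2}v_0 - v_0^{2^*-1}$ on $\Omega_c$ and $B_{k,t,z}^{2^*-1} - (2^*-1)v_0^{2^*-2}(B_{k,t,z}+R_{k,t,z})$ on $\Omega_a$, plus metric-induced quadratic remainders and higher-order Taylor remainders.

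Setting $q := 2n/(n+2)$, the change of variable $y = (x-\xi_k(z))/\mu_k(t)$ gives $\|B_{k,t,z}^{2^*-2}\|_{L^q(\Omega_c)} \sim \mu_k^{(n+2)/4}$ (the exponent emerging as $n/(2q) = (n+2)/4$), and a complementary calculation in the tail regime gives $\|B_{k,t,z}\|_{L^q(\Omega_a)} \sim \mu_k^{(n+2)/4}$; the trivial bound $\|v_0^{2^*-1}\|_{L^q(\Omega_c)} \lesssim |\Omega_c|^{1/q}$ also yields $\mu_k^{(n+2)/4}$. The pointwise bound \eqref{estR} together with the scaling $\mu_k = \eps_k^{4/(n-10)}$ (so that $\eps_k^2 \le \mu_k^{(n-10)/2}$) controls the $R$-dependent terms, scalar-curvature contributions, and higher-order remainders. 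On $\Omega_t$, the condition $|x-\xi_k(z)|\ge r_k/2$ yields $B_{k,t,z}, R_{k,t,z} \lesssim (\mu_k/r_k)^{(n-2)/2}$, which combined with $\mu_k = o(r_k^p)$ for all $p \ge 1$ makes the cutoff contributions negligible. The main technical obstacle is the careful bookkeeping of Taylor remainders across the two regimes $\Omega_c$ and $\Omega_a$, ensuring that no remainder exceeds $\mu_k^{(n+2)/4}$ after integration; the choice of scale $\sqrt{\mu_k(t)}$ is precisely what makes the core and annulus contributions balance at the same power $\mu_k^{(n+2)/4}$.
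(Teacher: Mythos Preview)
Your proposal is correct and follows essentially the same approach as the paper: the same three-region decomposition with threshold $\sqrt{\mu_k(t)}$, the same metric expansion in $\eps_k$ (trace-free/divergence-free making the linear scalar curvature term vanish), the same cancellation of the leading $\eps_k$-correction via the defining equation \eqref{eqRk} for $R_{k,t,z}$, and the same integral scalings leading to $\mu_k^{(n+2)/4}$. The only organizational difference is that the paper first isolates the $\check u_0$-interaction by the crude estimate $|f(W_{k,t,z})-f(U_{k,t,z}\circ\Phi)+f(\check u_0)|\le C\,\mathds 1_{\Omega_c}B_{k,t,z}^{2^*-2}+C\,\mathds 1_{\Omega_c^c}B_{k,t,z}$ and then treats $(\triangle_{\tilde g}+c_n\Scal_{\tilde g})U_{k,t,z}-f(U_{k,t,z})$ on its own, whereas you expand $f(B_{k,t,z}+R_{k,t,z}-v_0)$ directly; this is a cosmetic difference and both routes produce exactly the same main terms and remainders.
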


\begin{proof}
By \eqref{defmuk} and \eqref{estR}, we have $\left\| R_{k,t,z}/B_{k,t,z} \right\|_{L^\infty(B(y_k, 2r_k))} \to 0$ as $k \to  \infty$ uniformly with respect to $t,z$. As a consequence, and since $\mu_k^2 = \smallo\(r_k\)$,
\[
 \big| f(W_{k,t,z}) - f(U_{k,t,z}\circ\Phi) + f(\check{u}_0) \big| \le C \left \{ \bal & \big(B_{k,t,z} \circ \Phi \big)^{2^*-2} && \textrm{in } \Phi \big(B(\xi_k(z), \sqrt{\mu_k(t)})\big) \\ & \big(B_{k,t,z} \circ \Phi \big) && \textrm{otherwise.}\eal \right.  \]
Since $\check{u}_0$ satisfies \eqref{deftu0} and $\tilde g = \Phi_* \check{g}$ in $B(0, 4 \delta)$, we obtain
\ben \label{err1} \Vert E_{k,t,z} \Vert_{L^{\frac{2n}{n+2}}(M)} \le \Vert  (  \triangle_{\tilde g} + c_n \Scal_{\tilde g} )U_{k,t,z} - f(U_{k,t,z}) \Vert_{L^{\frac{2n}{n+2}}(B(y_k, 2 r_k))} + C \mu_k^{\frac{n+2}{4}} ,\een
 where $\tilde g$ is given by \eqref{deftg}. First, by using \eqref{estR} together with straightforward computations, we obtain 
\ben \label{err2}
 ( \triangle_{\tilde g} + c_n \Scal_{\tilde g} )U_{k,t,z} - f(U_{k,t,z})  = \bigO \( \mu_k^{\pui} r_k^{-n}\) 
 \een
in $B(y_k, 2r_k) \backslash B(y_k, r_k)$. In $B(y_k, r_k)$, by \eqref{deftg2}, we have 
\[ \tilde g = \exp \( \eps_k h_k(x) \),  \]
where $h_k$ is defined in \eqref{defhk}. Since the components of $h$ are homogeneous of degree two, $ |\eps_k h_k(x)|  \le C \eps_k r_k^2 = \smallo\(1\)$ holds for all $x \in B(y_k, r_k)$. The definition of $\tilde g$ therefore allows to expand its inverse as 
\ben \label{calc1}
\tilde g^{ij}(x) = \delta_{ij} - \eps_k (h_k)_{ij}(x) + \frac{\eps_k^2}{2} \sum_{p=1}^n (h_k)_{ip}(x)(h_k)_{pj}(x) + \bigO\(\eps_k^3 |x-y_k|^6 \)
\een
for $i,j \in \{1, \dotsc, n\}$. Similarly, the Christoffel symbols of $\tilde g$ expand as 
\begin{equation} \label{calc2} 
\Gamma_{ij}^\ell (\tilde g)(x)  = \frac{\eps_k}{2} \(\partial_i (h_k)_{j \ell}(x) + \partial_j (h_k)_{i \ell}(x) - \partial_{\ell}(h_k)_{ij}(x) \) + \bigO \( \eps_k^2 |x-y_k|^3 \)
\end{equation}
for $i,j, \ell \in \{1, \dotsc, n\}$. Using Proposition $26$ of Brendle \cite{Brendle}, and since $h_k$ is trace-free and divergence-free, the scalar curvature of $\tilde g$ expands as
\begin{equation} \label{calc3} 
\Scal_{\tilde g} (x)  = - \frac14\eps_k^2  \sum_{i,j,\ell=1}^n (\partial_i (h_k)_{j\ell}(x))^2 + \bigO\( \eps_k^3 |x-y_k|^4 \).
\end{equation}
Remark finally that, by definition of $\xi_k(z)$ in \eqref{defsuites}, there exists $C >1$ such that for any $x \in B(y_k, r_k)$, 
\[ \frac{1}{C} \le \frac{\mu_k + |x-y_k|}{\mu_k + |x - \xi_k(z)|} \le C  \]
holds true. Using \eqref{calc1}, \eqref{calc2} and \eqref{calc3}, we thus have, for $x \in B(y_k, r_k)$,
\begin{align} \label{err3} 
 &( \triangle_{\tilde g} + c_n \Scal_{\tilde g} )B_{k,t,z} - f(B_{k,t,z})\nonumber\\
& \qquad= (\triangle_{\tilde g} - \triangle_{\delta_0})B_{k,t,z} +\bigO \(\eps_k^2 \mu_k^{\pui} (\mu_k + |x-y_k|)^{4 - n } \)\nonumber\\
&\qquad= \eps_k \sum_{p,q = 1}^n (h_k)_{pq} \,\partial^2_{pq} B_{k,t,z} + \bigO \(\eps_k^2 \mu_k^{\pui} (\mu_k + |x-y_k|)^{4 - n } \). 
\end{align}
Since $U_{k,t,z}(x) = B_{k,t,z}(x) + R_{k,t,z}(x)$, by \eqref{estR},
\[  
 \left| f(U_{k,t,z}(x))  - f(B_{k,t,z}(x)) - f'(B_{k,t,z}(x)) R_{k,t,z}(x) \right| \le C B_{k,t,z}(x)^{2^*-3} R_{k,t,z}(x)^{2} 
\]
holds for any $x \in B(y_k,r_k)$. 
With \eqref{defmuk}, \eqref{estR},  \eqref{calc1}, \eqref{calc2}, \eqref{calc3} and \eqref{err3}, we obtain that, in $B(y_k, r_k)$,
\begin{align*} & ( \triangle_{\tilde g} + c_n \Scal_{\tilde g} )U_{k,t,z}  - f(U_{k,t,z}) \\
&\qquad = ( \triangle_{\tilde g} + c_n \Scal_{\tilde g} )B_{k,t,z} - f(B_{k,t,z})  + \triangle_{\delta_0} R_{k,t,z} - f'(B_{k,t,z}(x)) R_{k,t,z} \\
&\qquad\quad+ (\triangle_{\tilde g}  - \triangle_{\delta_0}) R_{k,t,z} + \bigO \(\eps_k^2 (\mu_k + |x - y_k|)^2 |R_{k,t,z}|\) + \bigO \(B_{k,t,z}^{2^*-3} R_{k,t,z}^{2} \) \\
&\qquad =  \bigO \(\eps_k^2 \mu_k^{\pui} (\mu_k + |x-y_k|)^{4 - n } \) 
\end{align*}
holds. With \eqref{defmuk}, \eqref{err1} and \eqref{err2}, this finally shows that
\[ 
\Vert E_{k,t,z} \Vert_{L^{\frac{2n}{n+2}}(M)} \le C\(\mu_k^{\frac{n+2}{4}} + \mu_k^{\pui} r_k^{1-\frac{n}{2}} +\eps_k^2\mu_k^4\)\le C \mu_k^{\frac{n+2}{4}},  
\]
where the last inequality follows since $n \ge 11$ and $\mu_k = \smallo\(r_k^p\)$ for any $ p\ge 1$. 
\end{proof}

\subsection{The Lyapunov--Schmidt reduction}

We endow $H^1(M)$ with the norm 
$$\Vert u \Vert_{H^1(M)} :=\sqrt{\int_M\(|\nabla u|_{\check g}^2 + c_n \Scal_{\check{g}} u^2\)dv_{\check g}},$$ 
and for $u\in H^1(M)$, we let 
\[ I(u) := \frac12 \int_M \( |\nabla u|_{\check g}^2 + c_n \Scal_{\check g} u^2 \)dv_{\check g} - \frac{1}{2^*} \int_M |u|^{2^*} dv_{\check g}. \]
For $(t,z) \in [1/A,A] \times B(0,1)$, $1 \le j \le n$ and $x \in M$, we let
\ben \label{defhatZ}
\hat{Z}_{0,k,t,z}(x): = \frac{2}{n-2}\,t \,\partial_t W_{k,t,z}(x) \quad \textrm{ and } \quad  \hat{Z}_{j,k,t,z}(x) :=  - n\,t\,\partial_{z_j} W_{k,t,z}(x),
\een
and we let 
\[ \hat{K}_{k,t,z} := \textrm{span} \big\{ \hat{Z}_{j,k,t,z}:\,0 \le j \le n \big\}, \]
which is regarded as a subset of $H^1(M)$. We denote by $\hat{K}_{k,t,z}^{\perp}$ its orthogonal complement in $H^1(M)$. The following proposition shows the existence of a canonical solution of \eqref{ynodal} in $\hat{K}_{k,t,z}^{\perp}$:

\begin{proposition} \label{LS}
There exists $C>0$ and, for large $k\in\N$, a function $\vp_k: [1/A, A] \times B(0,1) \to H^1(M)$ of class $C^1$ which is the only solution of 
\[ \Pi_{\hat{K}_{k,t,z}^{\perp}}\( u_{k,t,z} - \big(\triangle_{\check g} + c_n \Scal_{\check g}\big)^{-1}\big( f(u_{k,t,z}) \big) \) = 0 \]
in the set 
$$\left\{ \vp \in \hat{K}_{k,t,z}^{\perp}:\, \Vert \vp \Vert_{H^1(M)} \le C \Vert E_{k,t,z} \Vert_{L^{\frac{2n}{n+2}}(M)} \right\},$$ where we have let $u_{k,t,z} := W_{k,t,z} + \vp_k(t,z)$. In particular, 
\ben \label{errvp}
\Vert \vp_k(t,z) \Vert_{H^1(M)} \le C \Vert E_{k,t,z} \Vert_{L^{\frac{2n}{n+2}}(M)}
\een
for some $C >0$ independent of $k,t,z$. In addition, for large $k\in\N$, the function $u_{k,t,z}$ is a critical point of $I$ (hence a solution of \eqref{ynodal}) if and only if $(t,z)$ is a critical point of the mapping $(t,z) \mapsto I \(u_{k,t,z} \)$. 
\end{proposition}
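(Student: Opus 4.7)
The plan is to carry out the standard Lyapunov--Schmidt reduction, the crucial input being the uniform invertibility of the linearized operator on the orthogonal complement of the approximate kernel. Let $G := (\triangle_{\check{g}} + c_n \Scal_{\check{g}})^{-1}$, which is well-defined since $(M,\check{g})$ is of positive Yamabe type. Writing $u_{k,t,z} = W_{k,t,z} + \vp$ with $\vp \in \hat{K}_{k,t,z}^\perp$ and using $(\triangle_{\check{g}} + c_n \Scal_{\check{g}})W_{k,t,z} - f(W_{k,t,z}) = E_{k,t,z}$, the projected equation in the statement is equivalent to
\[
L_{k,t,z}\vp = -\Pi_{\hat{K}_{k,t,z}^\perp} G(E_{k,t,z}) + \Pi_{\hat{K}_{k,t,z}^\perp} G(N_{k,t,z}(\vp)),
\]
where $L_{k,t,z} := \mathrm{Id} - \Pi_{\hat{K}_{k,t,z}^\perp} \circ G \circ f'(W_{k,t,z})$ restricted to $\hat{K}_{k,t,z}^\perp$, and $N_{k,t,z}(\vp) := f(W_{k,t,z}+\vp) - f(W_{k,t,z}) - f'(W_{k,t,z})\vp$ is the superlinear remainder.

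The main step, and main obstacle, is to prove the existence of a constant $C>0$, independent of $k$ and $(t,z) \in [1/A,A] \times B(0,1)$, such that $\|L_{k,t,z}\vp\|_{H^1(M)} \ge C^{-1}\|\vp\|_{H^1(M)}$ for every $\vp \in \hat{K}_{k,t,z}^\perp$. I would argue by contradiction: suppose there exist sequences $(t_k,z_k)$ and $\vp_k \in \hat{K}_{k,t_k,z_k}^\perp$ with $\|\vp_k\|_{H^1(M)} = 1$ and $L_{k,t_k,z_k}\vp_k \to 0$ in $H^1(M)$. After extraction $(t_k,z_k) \to (t_\infty,z_\infty) \in [1/A,A] \times \overline{B(0,1)}$ and $\vp_k \rightharpoonup \vp_\infty$ weakly in $H^1(M)$. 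Since the elements of $\hat{K}_{k,t_k,z_k}$ concentrate at $x_0$ and thus converge weakly to $0$, while $W_{k,t_k,z_k} \to -\check{u}_0$ pointwise away from $x_0$, passing to the weak limit yields that $\vp_\infty$ belongs to the kernel of the linearized Yamabe operator at $\check{u}_0$, and the non-degeneracy assumption of Definition \ref{nondege} forces $\vp_\infty \equiv 0$. The $H^1$-mass of $\vp_k$ must therefore concentrate at $\xi_k(z_k)$ at the bubble scale $\mu_k(t_k)$. Rescaling
\[
\tilde{\vp}_k(y) := \mu_k(t_k)^{\pui}\,\vp_k\bigl(\Phi^{-1}(\xi_k(z_k) + \mu_k(t_k)\,y)\bigr),
\]
one obtains a bounded sequence in $D^{1,2}(\R^n)$ whose weak limit $V$ solves $\triangle_{\delta_0}V = (2^*-1)B_{1,0}^{2^*-2}V$ in $\R^n$. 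By the classification results of Rey \cite{Rey} and Bianchi--Egnell \cite{BianchiEgnell}, $V$ belongs to $\mathrm{span}\{V_0,\dotsc,V_n\}$; but the orthogonality conditions $\langle \vp_k,\hat{Z}_{j,k,t_k,z_k}\rangle_{H^1(M)} = 0$ pass to the limit to give that $V$ is $D^{1,2}$-orthogonal to the same span, whence $V \equiv 0$. A standard Struwe-type bubble decomposition then forces $\|\vp_k\|_{H^1(M)} \to 0$, a contradiction.

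With uniform invertibility in hand, a Banach fixed point argument on the ball $\{\vp \in \hat{K}_{k,t,z}^\perp : \|\vp\|_{H^1(M)} \le C_0 \|E_{k,t,z}\|_{L^{\frac{2n}{n+2}}(M)}\}$ applied to the map $\vp \mapsto L_{k,t,z}^{-1}\bigl(-\Pi_{\hat{K}_{k,t,z}^\perp} G(E_{k,t,z}) + \Pi_{\hat{K}_{k,t,z}^\perp} G(N_{k,t,z}(\vp))\bigr)$ produces the required unique fixed point $\vp_k(t,z)$ satisfying \eqref{errvp}. The contraction property follows from the H\"older-type estimate
\[
\|N_{k,t,z}(\vp) - N_{k,t,z}(\psi)\|_{L^{\frac{2n}{n+2}}(M)} \le C\bigl(\|\vp\|_{H^1(M)}^{2^*-2} + \|\psi\|_{H^1(M)}^{2^*-2}\bigr)\|\vp-\psi\|_{H^1(M)},
\]
valid since $n \ge 7$ makes $f$ locally $C^{1,2^*-2}$, combined with \eqref{esterreur}, which makes the contraction coefficient tend to $0$ as $k \to \infty$. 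The $C^1$-dependence of $\vp_k$ on $(t,z)$ is then a direct consequence of the implicit function theorem, since $W_{k,t,z}$ and $\Pi_{\hat{K}_{k,t,z}^\perp}$ depend smoothly on $(t,z)$. Finally, for the variational characterization, by construction of $\vp_k(t,z)$ there exist scalars $\lambda_0(t,z),\dotsc,\lambda_n(t,z)$ such that
\[
(\triangle_{\check{g}} + c_n \Scal_{\check{g}})u_{k,t,z} - f(u_{k,t,z}) = \sum_{j=0}^n \lambda_j(t,z)\,\hat{Z}_{j,k,t,z}.
\]
Differentiating $(t,z) \mapsto I(u_{k,t,z})$ and using the identities $\hat{Z}_{0,k,t,z} = \tfrac{2}{n-2}\,t\,\partial_t W_{k,t,z}$ and $\hat{Z}_{j,k,t,z} = -n\,t\,\partial_{z_j}W_{k,t,z}$ together with the smallness of $\partial_t\vp_k$ and $\partial_{z_j}\vp_k$ in $H^1(M)$, one obtains a linear system $A(t,z)\,\lambda(t,z) = \nabla_{(t,z)} I(u_{k,t,z})$. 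After rescaling, the coefficient matrix $A(t,z)$ is asymptotic as $k \to \infty$ to the non-singular Gram matrix $\bigl(\int_{\R^n}\nabla V_i \cdot \nabla V_j\,dx\bigr)_{0 \le i,j \le n}$ and is therefore invertible for $k$ large. Hence $\nabla_{(t,z)}I(u_{k,t,z}) = 0$ if and only if $\lambda_j(t,z) = 0$ for every $j$, equivalently if and only if $u_{k,t,z}$ solves \eqref{ynodal}.
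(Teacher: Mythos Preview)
Your proposal is correct and follows exactly the standard Lyapunov--Schmidt scheme that the paper invokes: the paper's own proof consists of showing that the approximate kernel elements $\hat{Z}_{j,k,t,z}$ differ from the rescaled model elements $Z_{j,k,t,z}$ only by lower-order terms (the estimates \eqref{DLhatZ1}--\eqref{DLhatZ2}), and then cites Proposition~5.1 of Robert--V\'etois \cite{RobertVetois} for the reduction itself; what you have written is precisely the content of that cited argument, including the correct use of the non-degeneracy of $\check{u}_0$ in the weak-limit step.

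One small slip: in your Lagrange-multiplier identity you write $\sum_j \lambda_j \hat{Z}_{j,k,t,z}$ on the right-hand side, whereas with the $H^1$-orthogonal projection used in the paper the correct form is $\sum_j \lambda_j (\triangle_{\check g}+c_n\Scal_{\check g})\hat{Z}_{j,k,t,z}$ (compare \eqref{DL00}); this does not affect the argument, since pairing against $\partial_t u_{k,t,z}$ and $\partial_{z_j} u_{k,t,z}$ still produces the same asymptotically diagonal, invertible system.
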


\begin{proof}
By \eqref{defW}, we have, for $0 \le j \le n$ and $x \in B(y_k, 2r_k)$,
\ben \label{althZ}
 \hat{Z}_{j,k,t,z}\( \Phi^{-1}(x) \) =  \chi \( \frac{x-y_k}{r_k} \)  \( Z_{j,k,t,z}(x)  + Q_{j,k,t,z}(x) \),
 \een
where we have let $Q_{0,k,t,z} :=  \frac{2}{n-2} t \,\partial_t R_{k,t,z}$ and $Q_{j,k,t,z} :=  - n t\,\partial_{z_j} R_{k,t,z}$. By differentiating \eqref{eqRk} and using \eqref{estR}, one gets that, for $0 \le j \le n$, $k\in\N$, $x \in B(y_k, 2r_k)$ and any $(t,z) \in [1/A,A] \times B(0,1)$,  $Q_{j,k,t,z}$ satifies
\ben \label{lapQ}
 \big| \triangle_{\delta_0} Q_{j,k,t,z} - (2^*-1) B_{k,t,z}^{2^*-2} Q_{j,k,t,z}  \big| \le C \eps_k \mu_k^{\frac{n+2}{2}} \( \mu_k + |\xi_k(z) - \cdot| \)^{-n}. \een
As in the proof of \eqref{estR}, a representation formula yields once again that, for $i\in\left\{0,1,2\right\}$,
\ben \label{estQ}
|\nabla^i Q_{j,k,t,z}(x)|\le C \eps_k \mu_k(t)^{\frac{n+2}{2}} \frac{\ln \( \frac{2 \mu_k(t) + |x - \xi_k(z)|}{\mu_k(t)} \)}{ \(\mu_k(t) + |x - \xi_k(z)| \)^{n-2+i}}
 \een
 for $x \in \R^n$. With \eqref{Z0}, \eqref{Zi} and \eqref{defhatZ}, we then obtain, for $0 \le j \le n$ and $x \in B(y_k,2r_k)$, that 
\begin{align} 
\hat{Z}_{j,k,t,z}\( \Phi^{-1}(x) \) & =  \chi \( \frac{x-y_k}{r_k} \)Z_{j,k,t,z}(x) \nonumber\\
  &\quad+ \bigO\( \eps_k \mu_k(t)^{\frac{n+2}{2}} \frac{\ln \( \frac{2 \mu_k(t) + |x - \xi_k(z)|}{\mu_k(t)} \)}{ \(\mu_k(t) + |x - \xi_k(z)| \)^{n-2}} \), \allowdisplaybreaks\label{DLhatZ1}\\
 \nabla \big(\hat{Z}_{j,k,t,z}\circ\Phi^{-1}\big)(x) & = \nabla \(  \chi \( \frac{\cdot-y_k}{r_k} \)Z_{j,k,t,z} \)(x) \nonumber\\
 &\quad+ \bigO\( \eps_k \mu_k(t)^{\frac{n+2}{2}} \frac{\ln \( \frac{2 \mu_k(t) + |x - \xi_k(z)|}{\mu_k(t)} \)}{ \(\mu_k(t) + |x - \xi_k(z)| \)^{n-1}} \)  .\label{DLhatZ2}
\end{align}
With these estimates, we can easily adapt the proof of Proposition 5.1 of Robert--V\'etois \cite{RobertVetois} (see also Esposito--Pistoia--V\'etois \cite{EspositoPistoiaVetois}), and Proposition \ref{LS} follows.
\end{proof}

We let $K_n$ be the optimal constant of the Sobolev embedding $D^{1,2}(\R^n) \hookrightarrow L^{2^*}(\R^n)$. As explained in the introduction, $K_n^{-2} = Y(\mathbb{S}^n,[g_{std}])$. Since the latter is attained by the stereographic projection of the bubbles  $B_{t,z}$ defined in \eqref{Btz}, for all $(t,z) \in [1/A,A] \times B(0,1)$, we have 
\[ \int_{\R^n} B_{t,z}^{2^*} \,dx = \int_{\R^n} |\nabla B_{t,z}|^2 dx =  K_n^{-n}.\]
 The explicit value of $K_n$ is known (see Aubin \cite{Aubin1} and Talenti \cite{Talenti}). The next result is an expansion of $(t,z) \mapsto I\( W_{k,t,z} \)$ as $k \to \infty$. 

\begin{proposition} \label{DLenergie2}
We have, as $k \to  \infty$:
\begin{multline}\label{DLI}
   I\(W_{k,t,z} \) = \frac{1}{n} K_n^{-n} + I(\check{u}_0)+ \mu_k^{\pui} \Bigg (  \Lambda(n) \check{u}_0(x_0) t^{\pui}-\frac12  \int_{\R^n} |\nabla R_{t,z}|^2 dx\\+\frac14  \int_{\R^n}\hspace{-2pt}\sum_{i,j,p=1}^n\hspace{-2pt}h_{ip}\,h_{pj}    \,\partial_i B_{t,z} \,\partial_j B_{t,z} \,dx- \frac{n-2}{32(n-1)}  \hspace{-2pt}\int_{\R^n}\hspace{-2pt} \sum_{i,j,\ell=1}^n (\partial_{i} h_{j \ell})^2 B_{t,z}^2 \,dx+ \smallo\(1\) \Bigg), 
\end{multline}
where $\check{u}_0$ is as in \eqref{deftu0}, $R_{t,z}$ is as in \eqref{Rktz} and where $\Lambda(n)$ is a positive dimensional constant given by \eqref{defLa} below. This expansion holds true in $C^1([1/A,A] \times B(0,1))$.
\end{proposition}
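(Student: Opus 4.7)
The plan is to decompose $I(W_{k,t,z})$ using that $\check u_0$ solves \eqref{deftu0}. Pairing this equation against $\tilde U := U_{k,t,z}\circ\Phi$ to eliminate the mixed $H^1$-term yields the splitting
\[ I(W_{k,t,z}) = I(\tilde U) + I(\check u_0) - \int_M \tilde U\, \check u_0^{2^*-1}\,dv_{\check g} - \frac{1}{2^*}\int_M\big(|W_{k,t,z}|^{2^*} - \tilde U^{2^*} - \check u_0^{2^*}\big)\,dv_{\check g}, \]
and each piece must be expanded to precision $\smallo(\mu_k^{\pui})$, uniformly and in $C^1$ on $[1/A,A]\times B(0,1)$. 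The two interaction integrals are localized near $x_0$: after rescaling $y=(x-y_k)/\mu_k$, the linear one is bounded by $\mu_k^{\pui}r_k^2\Vert\check u_0\Vert_\infty^{2^*-1}=\smallo(\mu_k^{\pui})$, while the nonlinear one is treated by splitting $M$ into $\{|\tilde U|>|\check u_0|\}$ and its complement, Taylor expanding $|W|^{2^*}$ on each region and Taylor expanding $\check u_0$ around $x_0$; the dominant term $\int B_{k,t,z}^{2^*-1}\check u_0\,dx$ then produces $\Lambda(n)\check u_0(x_0)\,t^{\pui}\mu_k^{\pui}$ with $\Lambda(n):=\int_{\R^n}B_{1,0}^{2^*-1}\,dy$.

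For $I(\tilde U)$, the support of $U_{k,t,z}\circ\Phi$ lies in the flat chart where $\Phi_*\check g=\tilde g=\exp(\eps_k h_k)$ and $\det\tilde g\equiv 1$, so it reduces to an integral over $\R^n$. I would expand $\tilde g^{ij}$ and $\Scal_{\tilde g}$ via \eqref{calc1} and \eqref{calc3}, substitute $U_{k,t,z}=\chi(B_{k,t,z}+R_{k,t,z})$ and then rescale. The leading $\tfrac{1}{n}K_n^{-n}$ comes from the unperturbed bubble. The crucial cancellation is that the $\bigO(\eps_k)$ metric correction $-\tfrac{\eps_k}{2}\int h_k^{pq}\partial_p B_{k,t,z}\partial_q B_{k,t,z}\,dx$ vanishes identically via the identity $G_k(B_{k,t,z})=0$ established in the proof of Lemma~\ref{lemmeR} using \eqref{hyph1}; without this cancellation, the next correction would be of the much larger size $\eps_k\mu_k^2\gg\mu_k^{\pui}$ and the stated formula could not hold. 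At order $\eps_k^2\mu_k^4=\mu_k^{\pui}$, the quadratic-in-$\eps_k$ metric piece then gives $\tfrac14\int\sum_p h_{ip}h_{pj}\partial_i B_{t,z}\partial_j B_{t,z}\,dy$ and the scalar-curvature piece gives $-\tfrac{n-2}{32(n-1)}\int(\partial h)^2 B_{t,z}^2\,dy$.

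The contribution of $R_{k,t,z}$ is extracted using two further consequences of $G_k(B_{k,t,z})=0$, namely $\int\nabla B_{k,t,z}\cdot\nabla R_{k,t,z}\,dx=0$ and $\int B_{k,t,z}^{2^*-1}R_{k,t,z}\,dx=0$, which kill both the gradient cross-term and the linear part of the Taylor expansion of $(B+R)^{2^*}$. The remaining quadratic piece $\tfrac12\int|\nabla R|^2-\tfrac{2^*-1}{2}\int B^{2^*-2}R^2$ combines, after testing \eqref{eqRk} against $R_{k,t,z}$ and integrating by parts, with the linear cross term $-\eps_k\int h^{pq}\partial_p B\,\partial_q R$ coming from the metric expansion applied to $U_{k,t,z}$; via the rescaled equation \eqref{Rtz}, the combined contribution at order $\mu_k^{\pui}$ becomes the announced $-\tfrac12\int|\nabla R_{t,z}|^2\,dy$. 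Cutoff errors from $\chi$, higher Taylor remainders of the form $\int B^{2^*-3}R^3$, and $\bigO(\eps_k^3)$ metric corrections are all $\smallo(\mu_k^{\pui})$ thanks to $n\ge 11$ and $\mu_k=\smallo(r_k^p)$ for every $p\ge 1$. The $C^1$ convergence is then obtained by differentiating each of these estimates with respect to $t$ and $z$. The main obstacle will be the uniform bookkeeping of these multiple cancellations over $(t,z)\in[1/A,A]\times B(0,1)$ while systematically invoking the tensorial constraints \eqref{hyph1}.
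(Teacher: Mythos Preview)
Your proposal is correct and follows essentially the same route as the paper: decompose $I(W_{k,t,z})$ via the equation for $\check u_0$ into $I(U_{k,t,z}\circ\Phi)+I(\check u_0)$ plus the interaction producing $\Lambda(n)\check u_0(x_0)t^{(n-2)/2}$ (the paper's \eqref{ener00}--\eqref{ener1}), then expand $I(U_{k,t,z}\circ\Phi)$ in the flat chart using \eqref{calc1}--\eqref{calc3}, the cancellation $G_k(B_{k,t,z})=0$ from Lemma~\ref{lemmeR}, and the identity \eqref{ener5} obtained by testing \eqref{eqRk} against $R_{k,t,z}$. Your additional observation that $\int B_{k,t,z}^{2^*-1}R_{k,t,z}=\int\nabla B_{k,t,z}\cdot\nabla R_{k,t,z}=0$ (test \eqref{eqRk} against $B_{k,t,z}$ and use $G_k(B_{k,t,z})=0$) is a clean shortcut the paper does not make explicit, but the overall bookkeeping is identical.
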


\begin{proof}
First, by \eqref{estR} and \eqref{defU}, it is easily seen that $0 \le U_{k,t,z} \le C B_{k,t,z}$ in $B(y_k, 2r_k)$. By \eqref{defW}, we can then write, for some $0 < \theta < 2^*-2$, that
\[ \bal 
\Big| |W_{k,t,z}|^{2^*} - \(U_{k,t,z} \circ \Phi \)^{2^*} - \check{u}_0^{2^*} + 2^* \( U_{k,t,z} \circ \Phi \)^{2^*-1} \check{u}_0 + 2^* \(U_{k,t,z} \circ\Phi \)\check{u}_0^{2^*-1} \Big| \\
\le C \(\(B_{k,t,z} \circ \Phi \)^{2^*-1-\theta} + \( B_{k,t,z} \circ \Phi \)^{1+\theta}\) \eal \]
holds in $M$. As a consequence, and since $\check{u}_0$ solves \eqref{deftu0}, straightforward computations show that, for $0 <\theta < \frac{2}{n-2}$,
\begin{align} \label{ener00} 
&I \(W_{k,t,z} \)  = I\(U_{k,t,z} \circ \Phi \)  + I(\check{u}_0) + \int_M \( U_{k,t,z} \circ \Phi  \)^{2^*-1} \check{u}_0 dv_{\check g} \nonumber\\
&\qquad+ \bigO \( \int_M \( \(B_{k,t,z} \circ \Phi\)^{2^*-1-\theta} + \( B_{k,t,z} \circ \Phi\)^{1+\theta} \)dv_{\check g}\) \nonumber\allowdisplaybreaks\\
&\quad=  I\(U_{k,t,z} \circ \Phi \)  + I(\check{u}_0) + \int_M \( U_{k,t,z} \circ  \Phi \)^{2^*-1} \check{u}_0\,dv_{\check g} + \bigO\(\mu_k^{\pui(1+\theta)}\). 
\end{align}
Independently, by \eqref{estR} and \eqref{defU}, and since $\check{u}_0$ is of class $C^2$, we have 
\ben \label{intUPhi}
\int_M \(U_{k,t,z} \circ \Phi \)^{2^*-1} \check{u}_0 \,dv_{\check g} = \Lambda(n) \check{u}_0(x_0) \mu_k(t)^{\pui} + \smallo\(\mu_k^{\pui}\), 
\een
where $\mu_k(t) := \mu_k t$ is as in \eqref{defsuites} and where we have let 
\ben \label{defLa}
\Lambda(n) := \int_{\R^n} \( 1+ \frac{|x|^2}{n(n-2)} \)^{- \frac{n+2}{2}} dx. 
\een
We have thus proven that 
\ben \label{ener1}
I \(W_{k,t,z} \) = I\(U_{k,t,z} \circ\Phi \)  + I(\check{u}_0) + \Lambda(n) \check{u}_0(x_0) \mu_k(t)^{\pui} + \smallo\(\mu_k^{\pui}\). 
\een
It remains to expand $I\(U_{k,t,z}\circ \Phi \)$. By definition, we have $\check{g} = \Phi^* \tilde{g}$ in $B_g(x_0, 6 \delta)$, where $\tilde g$ is given by \eqref{deftg}, so by \eqref{defU} and since $\textrm{det}\,\tilde g = 1$, we have
\begin{multline} \label{ener0} 
I\(U_{k,t,z}\circ \Phi \)  
  =  \frac12 \int_{\R^n}\(\left|\nabla \( B_{k,t,z} + R_{k,t,z} \) \right|_{\tilde g}^2 + c_n \Scal_{\tilde g} \( B_{k,t,z} + R_{k,t,z} \)^2\)dx \\
- \frac{1}{2^*} \int_{\R^n} \left| B_{k,t,z} + R_{k,t,z} \right|^{2^*} dx + \smallo\(\mu_k^{\pui}\), 
\end{multline}
 where $R_{k,t,z}$ is given by Lemma \ref{lemmeR}. In the latter equality, we implicitly assumed, in accordance with \eqref{deftg}, that $\tilde g$ has been extended as a metric in $\R^n$ that coincides with the Euclidean metric in $\R^n\backslash B(0,4 \delta)$. First, by \eqref{estR} and \eqref{calc1}, we have
\begin{multline} \label{ener2}
  \frac12 \int_{\R^n} \left|\nabla \( B_{k,t,z} + R_{k,t,z} \) \right|_{\tilde g}^2 \,dx = \frac12 \int_{\R^n} |\nabla B_{k,t,z}|^2 dx+ \int_{\R^n} \langle \nabla B_{k,t,z}, \nabla R_{k,t,z} \rangle \,dx \\
   + \frac12 \int_{\R^n} |\nabla R_{k,t,z}|^2 dx - \frac{\eps_k}{2}\int_{\R^n} \sum_{i,j=1}^n (h_k)_{ij} \,\partial_i B_{k,t,z} \,\partial_j B_{k,t,z} \,dx\allowdisplaybreaks\\
  + \frac{\eps_k^2}{4} \int_{\R^n} \sum_{i,j,p=1}^n (h_k)_{ip}(h_k)_{pj}    \,\partial_i B_{k,t,z} \,\partial_j B_{k,t,z} \,dx\\
  - \eps_k \int_{\R^n}\sum_{i,j=1}^n (h_k)_{ij} \,\partial_i B_{k,t,z} \,\partial_j R_{k,t,z} \,dx + \smallo\(\mu_k^{\pui}\) ,
\end{multline}
 where $h_k$ is as in \eqref{defhk}. Similarly, using \eqref{defmuk}, \eqref{estR}  and \eqref{calc3}, we have 
\begin{multline} \label{ener3}
 \frac{c_n}{2}\int_{\R^n}  \Scal_{\tilde g} \( B_{k,t,z} + R_{k,t,z} \)^2 dx = - \frac{c_n}{8} \eps_k^2 \int_{\R^n} \sum_{i,j,\ell=1}^n (\partial_{i} (h_k)_{j \ell})^2 B_{k,t,z}^2 \,dx\\+ \smallo \(\mu_k^{\pui}\).
\end{multline}
 Finally, using \eqref{estR}, we have
\begin{multline} \label{ener3bis}
\frac{1}{2^*} \int_{\R^n} \left| B_{k,t,z} + R_{k,t,z} \right|^{2^*} dx  = \frac{1}{2^*} \int_{\R^n} B_{k,t,z}^{2^*}dx + \int_{\R^n} B_{k,t,z}^{2^*-1} R_{k,t,z} \,dx \\
+ (2^*-1)  \int_{\R^n} B_{k,t,z}^{2^*-2} R_{k,t,z}^2 \,dx + \smallo\( \mu_k^{\pui}\). 
\end{multline}
As shown in the proof of Lemma \ref{lemmeR}, we have 
 \ben \label{ener4}
  \int_{\R^n} \sum_{i,j=1}^n (h_k)_{ij} \,\partial_i B_{k,t,z} \,\partial_j B_{k,t,z} \,dx  = 0\een
 for any $(t,z) \in [1/A,A] \times B(0,1)$. We now integrate \eqref{eqRk} against $R_{k,t,z}$:  after integrating by parts and using that $h$ is divergence-free, we find that
\begin{multline} \label{ener5}
 \int_{\R^n}| \nabla R_{k,t,z}|^2 dx - (2^*-1) \int_{\R^n} B_{k,t,z}^{2^*-2} R_{k,t,z}^2 \,dx\\
 = \eps_k \int_{\R^n} \sum_{i,j=1}^n (h_k)_{ij} \,\partial_i B_{k,t,z} \,\partial_j R_{k,t,z} \,dx.
\end{multline}
 It remains to plug  \eqref{ener2}, \eqref{ener3}, \eqref{ener3bis}, \eqref{ener4} and \eqref{ener5} into \eqref{ener0}. This gives 
\begin{multline*} 
I\(U_{k,t,z}\circ \Phi \) = \frac{1}{n} K_n^{-n}  - \frac12 \int_{\R^n} |\nabla R_{k,t,z}|^2 dx  \\
 + \frac{\eps_k^2}{4} \int_{\R^n} \sum_{i,j,p=1}^n (h_k)_{ip}(h_k)_{pj}    \,\partial_i B_{k,t,z} \,\partial_j B_{k,t,z} \,dx \\
 - \frac{n-2}{32(n-1)} \eps_k^2 \int_{\R^n} \sum_{i,j,\ell=1}^n (\partial_{i} (h_k)_{j \ell})^2 B_{k,t,z}^2 \,dx + \smallo\(\mu_k^{\pui}\).
\end{multline*}
Simple changes of variables using \eqref{defhk} and \eqref{Rtz} now yield
\begin{multline*} 
\frac{\eps_k^2}{4} \int_{\R^n} \sum_{i,j,p=1}^n (h_k)_{ip}(h_k)_{pj}    \,\partial_i B_{k,t,z} \,\partial_j B_{k,t,z} \,dx\\
 - \frac{n-2}{32(n-1)} \eps_k^2 \int_{\R^n} \sum_{i,j,\ell=1}^n (\partial_{i} (h_k)_{j \ell})^2 B_{k,t,z}^2 \,dx\\
=\eps_k^2 \mu_k^4 \Bigg (\frac14 \int_{\R^n}\hspace{-3pt}\sum_{i,j,p=1}^n\hspace{-2pt}h_{ip}\,h_{pj}    \,\partial_i B_{t,z} \,\partial_j B_{t,z} \,dx- \frac{n-2}{32(n-1)}  \int_{\R^n}\hspace{-3pt}\sum_{i,j,\ell=1}^n \hspace{-2pt}(\partial_{i} h_{j \ell})^2 B_{t,z}^2 \,dx \Bigg) 
\end{multline*}
and 
\[ \frac12 \int_{\R^n} |\nabla R_{k,t,z}|^2 dx = \frac12 \eps_k^2 \mu_k^4 \int_{\R^n} |\nabla R_{t,z}|^2 dx ,\]
where $R_{t,z}$ and $B_{t,z}$ are as in \eqref{Rktz} and \eqref{Btz}. Together with \eqref{defmuk}, \eqref{ener00}, \eqref{intUPhi} and \eqref{ener1}, this concludes the proof of \eqref{DLI}. The expansions of the derivatives with respect to $t$ and $z$ follow from similar estimates.
\end{proof}

To conclude this section, we show that $I\(u_{k,t,z}\)$ expands at first-order as $I\(W_{k,t,z}\)$ and that this expansion is $C^1$ in $t$:

\begin{proposition} \label{DLenergie}
We have
\[I \(u_{k,t,z} \) = I\(W_{k,t,z} \) + \smallo\(\mu_k^{\pui}\)\text{ and }\partial_t \( I \(u_{k,t,z} \) \) = \partial_t \(  I\(W_{k,t,z} \) \)+ \smallo\(\mu_k^{\pui}\) \]
as $k \to  \infty$, uniformly with respect to $(t,z) \in [1/A,A] \times \overline{B(0,1)}$, where $u_{k,t,z}$ is as in Proposition \ref{LS}. 
\end{proposition}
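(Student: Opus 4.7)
The approach is to Taylor-expand $I$ around $W_{k,t,z}$ and to exploit both the smallness of $\vp_k$ in $H^1(M)$ provided by Proposition \ref{LS} and the precise size of the error $E_{k,t,z}$. Writing $u := u_{k,t,z} = W_{k,t,z} + \vp_k(t,z)$, Taylor's formula yields
\begin{equation*}
I(u) - I(W_{k,t,z}) = \int_M E_{k,t,z}\,\vp_k\,dv_{\check g} + \int_0^1 (1-s)\, I''(W_{k,t,z}+s\vp_k)[\vp_k,\vp_k]\,ds,
\end{equation*}
since $I'(W_{k,t,z})[\vp_k] = \int_M E_{k,t,z}\vp_k\,dv_{\check g}$ by \eqref{erreur}. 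Hölder's inequality with exponents $\frac{2n}{n+2}$ and $2^*$, the Sobolev embedding, and \eqref{errvp}--\eqref{esterreur} yield $\big|\int_M E_{k,t,z}\,\vp_k\,dv_{\check g}\big| \le C\mu_k^{(n+2)/2}$. The quadratic integrand satisfies $|I''(W_{k,t,z}+s\vp_k)[\vp_k,\vp_k]| \le \Vert\vp_k\Vert_{H^1(M)}^2 + C\Vert W_{k,t,z}+s\vp_k\Vert_{L^{2^*}(M)}^{2^*-2}\Vert\vp_k\Vert_{L^{2^*}(M)}^2 \le C\mu_k^{(n+2)/2}$, using the uniform $L^{2^*}$-boundedness of $W_{k,t,z}$. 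Since $\mu_k^{(n+2)/2} = \mu_k^2\cdot\mu_k^{\pui} = \smallo(\mu_k^{\pui})$, the first identity follows.

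For the $C^1$-statement in $t$, the plan is to first establish a quantitative bound on $\partial_t \vp_k$. Differentiating in $t$ the reduction equation $\Pi_{\hat K_{k,t,z}^\perp}\bigl[u - (\triangle_{\check g}+c_n\Scal_{\check g})^{-1}f(u)\bigr] = 0$ defining $\vp_k$ yields a linear equation for $\partial_t \vp_k \in H^1(M)$. Its source terms involve $\partial_t W_{k,t,z}$, the $t$-derivatives of the generators $\hat Z_{j,k,t,z}$ of $\hat K_{k,t,z}$, and the derivative of the projector $\Pi_{\hat K_{k,t,z}^\perp}$; exploiting the scaling $\mu_k(t) = \mu_k t$ with $t \in [1/A,A]$ and expansions analogous to \eqref{DLhatZ1}--\eqref{DLhatZ2} shows that all these source terms are $\bigO(\mu_k^{(n+2)/4})$ in $L^{2n/(n+2)}(M)$. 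The uniform invertibility of the linearized operator on $\hat K_{k,t,z}^\perp$, which already underpins Proposition \ref{LS} and follows from Bianchi--Egnell non-degeneracy combined with the non-degeneracy of $\check u_0$, then yields $\Vert\partial_t\vp_k\Vert_{H^1(M)} \le C\mu_k^{(n+2)/4}$.

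With this bound in hand, differentiating the above Taylor expansion in $t$ gives
\begin{equation*}
\partial_t I(u) - \partial_t I(W_{k,t,z}) = \int_M (\partial_t E_{k,t,z})\vp_k\,dv_{\check g} + \int_M E_{k,t,z}(\partial_t\vp_k)\,dv_{\check g} + \partial_t R_k(t,z),
\end{equation*}
where $R_k(t,z)$ denotes the second-order remainder. Each term is $\bigO(\mu_k^{(n+2)/2}) = \smallo(\mu_k^{\pui})$ by Hölder's inequality, using the $H^1$-bounds on $\vp_k$ and $\partial_t\vp_k$ together with an analogue of \eqref{esterreur} for $\partial_t E_{k,t,z}$, which follows by differentiating the proof of \eqref{esterreur} and exploiting the scale-invariance in $t$. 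The non-smooth factor $|W_{k,t,z}+s\vp_k|^{2^*-2}$ entering $\partial_t R_k$ when $n > 6$ is still differentiable as an $L^{n/2}(M)$-valued function of $(t,z)$, which suffices for the bookkeeping. The main obstacle is the derivation of the $\partial_t \vp_k$ estimate: the projectors $\Pi_{\hat K^\perp}$ depend on $(t,z)$, and differentiating the functions $\hat Z_{j,k,t,z}$ introduces factors of $1/\mu_k(t)$ from the concentration scaling, which must be compensated by the bubble-type decay of $\vp_k$ through weighted $L^p$-estimates; this is however a standard feature of Lyapunov--Schmidt constructions, as in \cite{EspositoPistoiaVetois, RobertVetois, AmbrosettiMalchiodi2}, and the arguments closely parallel the proof of Proposition \ref{LS}.
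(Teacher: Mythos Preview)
Your argument for the $C^0$-expansion is essentially the paper's and is correct. For the $C^1$-statement, however, you take a genuinely different route. The paper never bounds $\Vert\partial_t\vp_k\Vert_{H^1}$: instead it writes $\partial_t I(u_{k,t,z}) = DI(u_{k,t,z})[\partial_t u_{k,t,z}] = \sum_j\lambda_k^j(\hat Z_{j,k,t,z},\partial_t u_{k,t,z})_{H^1}$ using the projected Euler--Lagrange identity \eqref{DL00}, estimates the Lagrange multipliers $\lambda_k^j = \bigO(\mu_k^{\pui})$ by testing \eqref{DL00} against $\hat Z_{j,k,t,z}$ (this step relies on a \emph{pointwise} bound $|\vp_k|\le\sigma_k(1+U_{k,t,z}\circ\Phi)$ from \cite{Premoselli13}), and then handles $\partial_t\vp_k$ solely through the identity $(\hat Z_{j,k,t,z},\partial_t\vp_k)_{H^1} = -(\partial_t\hat Z_{j,k,t,z},\vp_k)_{H^1} = \smallo(1)$, which follows from differentiating the orthogonality $\vp_k\in\hat K_{k,t,z}^\perp$. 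Your strategy of bounding $\partial_t\vp_k$ in $H^1$ and differentiating the Taylor expansion is also viable and more self-contained (no external pointwise estimate), at the cost of the additional linear analysis you sketch. Note also that your worry about ``factors of $1/\mu_k(t)$'' is overstated: since the parameter is $t\in[1/A,A]$ with $\mu_k(t)=\mu_k t$, differentiating in $t$ only introduces harmless factors of order $1/t=\bigO(1)$.

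One genuine technical correction: your claim that $|W_{k,t,z}+s\vp_k|^{2^*-2}$ is differentiable as an $L^{n/2}(M)$-valued function of $(t,z)$ is false for $n>6$, since $a\mapsto|a|^{2^*-2}$ is only H\"older of exponent $2^*-2<1$ and the associated Nemytskii operator $L^{2^*}\to L^{n/2}$ is not Fr\'echet differentiable. The remainder $R_k(t,z)=I(W+\vp)-I(W)-I'(W)[\vp]$ is nonetheless $C^1$ in $t$ because $I$ is $C^2$ on $H^1(M)$ and $(t,z)\mapsto(W_{k,t,z},\vp_k(t,z))$ is $C^1$; you should then bound $\partial_t R_k$ using the H\"older continuity (not differentiability) of $I''$ together with your $H^1$-bounds on $\vp_k$ and $\partial_t\vp_k$. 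With this fix, your approach goes through.
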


$C^1$-expansions in the $z$ variable also hold true, but we will not need them here. 

\begin{proof}
By the mean value theorem, \eqref{esterreur} and \eqref{errvp}, we have 
\[ \bal I \( u_{k,t,z} \) & = I\(W_{k,t,z} \) + DI \(W_{k,t,z} \)(\vp_k(t,z)) + \bigO \( \Vert \vp_k(t,z) \Vert_{H^1(M)}^2 \) \allowdisplaybreaks\\
& =  I\(W_{k,t,z} \) +  \int_M E_{k,t,z} \vp_k(t,z) \,dv_{\tilde g}  +  \smallo\(\mu_k^{\pui}\)=   I\(W_{k,t,z} \) +  \smallo\(\mu_k^{\pui}\), \eal \]
which proves the first equality. We now show the $C^1$--estimate in $t$. By Proposition \ref{LS}, there exist, for any $(t,z) \in [1/A,A] \times B(0,1)$, real numbers $(\lambda_k^j(t,z))_{0 \le j \le n}$ such that $u_{k,t,z}$ satisfies 
\ben \label{DL00}
 \( \triangle_{\check g} + c_n \Scal_{\check g} \) u_{k,t,z} - f(u_{k,t,z}) =  \sum_{j=0}^n \lambda_k^j(t,z) \( \triangle_{\check g} + c_n \Scal_{\check g} \) \hat{Z}_{j,k,t,z}\text{ in }M.\een
By definition of $u_{k,t,z}$, we have independently
\begin{multline} \label{DL0} 
 \( \triangle_{\check g}  + c_n \Scal_{\check g} \) u_{k,t,z} - f(u_{k,t,z}) = E_{k,t,z} 
 +  \( \triangle_{\check g} + c_n \Scal_{\check g} - f'(W_{k,t,z}) \) \vp_k(t,z) \\
  - \( f(u_{k,t,z}) - f(W_{k,t,z}) - f'(W_{k,t,z}) \vp_k(t,z) \), 
\end{multline}
 where $E_{k,t,z}$ is as in \eqref{erreur}. By \eqref{esterreur} and \eqref{errvp}, Theorem 4.3 of Premoselli \cite{Premoselli13} applies and 
 shows the existence of a sequence $\sigma_k $ of positive numbers with $\sigma_k\to0$ as $k\to\infty$ such that for any $x \in M$ and $(t,z) \in [1/A,A] \times B(0,1)$,
\ben \label{estphik}
 |\vp_k(t,z)(x)| \le \sigma_k \( 1 + U_{k,t,z}\( \Phi(x )\) \) 
 \een
holds. By \eqref{estR} and \eqref{defW}, we can then let $R>0$ be large enough so that 
\[ W_{k,t,z}\( \Phi^{-1}(x) \) \ge \frac12 B_{k,t,z}(x) \]
holds for any $x \in B(\xi_k(z), \sqrt{\mu_k}/R) \subset \R^n$ and for any $k\in\N$ and $(t,z) \in [1/A,A] \times B(0,1)$. In $\Phi^{-1}\(B(\xi_k(z), \sqrt{\mu_k}/R)\)$, we then have
\[ \left| f(u_{k,t,z}) - f(W_{k,t,z}) - f'(W_{k,t,z}) \vp_k(t,z) \right| \le C \(U_{k,t,z}\circ\Phi \)^{2^*-3} \vp_k(t,z)^2. \]
In $M \backslash \Phi^{-1}\(B(\xi_k(z), \sqrt{\mu_k}/R)\)$, \eqref{estphik} similarly shows that 
\[ \left| f(u_{k,t,z}) - f(W_{k,t,z}) - f'(W_{k,t,z}) \vp_k(t,z) \right| \le C |\vp_k(t,z)|. \]
We let $j \in \{0, \dotsc, n\}$ and integrate \eqref{DL0} against $\hat{Z}_{j,k,t,z}$. Since $|\hat{Z}_{j,k,t,z}| \le C \(U_{k,t,z}\circ  \Phi\)$ in $M$, by using the latter inequality together with straightforward computations, we obtain
\be 
\bal 
 &\int_M \( f(u_{k,t,z}) - f(W_{k,t,z}) - f'(W_{k,t,z}) \vp_k(t,z) \) \hat{Z}_{j,k,t,z} \,dv_{\check g} \\
&\qquad =  \bigO \left( \int_{\Phi^{-1} \(B(\xi_k(z), \sqrt{\mu_k}/R)\)} \( U_{k,t,z} \circ\Phi \)^{2^*-2} |\vp_k(t,z)|^2 dv_{\check g} \right) \\
&\qquad\quad+ \bigO \left( \int_{M \backslash \Phi^{-1} \(B(\xi_k(z), \sqrt{\mu_k}/R)\)} \( U_{k,t,z} \circ \Phi \) |\vp_k(t,z)| \,dv_{\check g} \right)\allowdisplaybreaks\\
&\qquad = \bigO \(\Vert \vp_k(t,z) \Vert_{H^1(M)}^2  + \Vert \vp_k(t,z) \Vert_{H^1(M)} \Vert B_{k,t,z} \Vert_{L^{\frac{2n}{n+2}}(\R^n\backslash B(\xi_k(z), \sqrt{\mu_k}/R))} \)\\
&\qquad= \smallo\(\mu_k^{\pui}\)
\eal 
\ee
holds, where we have used \eqref{esterreur} and \eqref{errvp} in the last line. With \eqref{DLhatZ1}, \eqref{DLhatZ2}, \eqref{DL00} and \eqref{DL0}, the latter shows that 
\begin{multline}\label{DL2}
(1+\smallo\(1\)) \Vert \nabla V_j \Vert_{L^2(\R^n)}^2 \lambda_k^j(t,z) =  \int_M E_{k,t,z} \hat{Z}_{j,k,t,z} \,dv_{\check g}  + \smallo\(\mu_k^{\pui}\) \\
+ \int_M \vp_k(t,z) \(\triangle_{\check g} + c_n \Scal_{\check g} - f'(W_{k,t,z}) \) \hat{Z}_{j,k,t,z} \,dv_{\check g}. 
\end{multline}
By \eqref{defhatZ} and Proposition \ref{DLenergie2}, we have 
\ben \label{DL3}
\int_M E_{k,t,z} \hat{Z}_{0,k,t,z} \,dv_{\check g}   =\left\{ \bal&\frac{2}{n-2}\, t \,\partial_t  \( I \(W_{k,t,z} \) \)  = \bigO \(\mu_k^{\pui} \)&&\text{if }j=0 \\
& -n  \, t \,\partial_{z_j} \( I \(W_{k,t,z} \) \)= \bigO \(\mu_k^{\pui} \)&&\text{if }1\le j\le n.\eal\right.
\een
By  \eqref{estR}, \eqref{defW}, \eqref{defhatZ} and \eqref{althZ}, we have, for $0 \le j \le n$ and $x \in B(y_k,2r_k)$,
\begin{multline*} 
\(\triangle_{\check g} + c_n \Scal_{\check g} - f'(W_{k,t,z}) \) \hat{Z}_{j,k,t,z}(\Phi^{-1}(x))  \\
 = \(\triangle_{\tilde g} + c_n \Scal_{\tilde g} - f'(B_{k,t,z}) \) \( Z_{j,k,t,z} + Q_{j,k,t,z} \)(x) + \bigO \(r_k^{-n}\mu_k^{\pui} \) \\
 + \bigO\(\frac{\eps_k \mu_k^{\frac{n+2}{2}}}{(\mu_k + |\xi_k(z) - x|)^{n}}\) +  \bigO \(\left \{ \bal &B_{k,t,z}(x)^{2^*-2} &&  \textrm{if } |x - \xi_k(z)| \le \sqrt{\mu_k(t)}  \\ & B_{k,t,z}(x) && \textrm{otherwise }\eal \right\} \). 
\end{multline*}  
Using then \eqref{calc1}, \eqref{calc2}, \eqref{calc3}, \eqref{lapQ} and \eqref{estQ}, we find that, for $x \in B(y_k, 2r_k)$,
$$\(\triangle_{\tilde g} + c_n \Scal_{\tilde g} - f'(B_{k,t,z}) \) \( Z_{j,k,t,z} + Q_{j,k,t,z} \)(x) 
= \bigO \(\frac{\eps_k \mu_k^{\pui}}{(\mu_k + |\xi_k(z) - x|)^{n-2}}\)$$
holds. Thus, 
\begin{multline}
 \left| \(\triangle_{\check g} + c_n \Scal_{\check g} - f'(W_{k,t,z}) \) \hat{Z}_{j,k,t,z}\( \Phi^{-1}(x) \) \right| \\ 
\le C\(r_k^{-n}\mu_k^{\pui}+ \eps_k B_{k,t,z}(x) +  \left \{ \bal & B_{k,t,z}(x)^{2^*-2}  &&\textrm{if } |x - \xi_k(z)| \le \sqrt{\mu_k(t)} \\ & B_{k,t,z}(x)  &&\textrm{otherwise }\eal \right\}\)
\end{multline}
holds for any $x \in B(y_k, 2r_k)$. As a consequence, with \eqref{defmuk},
\[ \bal \left \Vert \Big(\triangle_{\check g} + c_n \Scal_{\check g} - f'(W_{k,t,z}) \Big) \hat{Z}_{j,k,t,z} \right \Vert_{L^{\frac{2n}{n+2}}(M)} & \le C\(r_k^{-n}\mu_k^{\pui} + \eps_k \mu_k^2 + \mu_k^{\frac{n+2}{4}}\)\\
& \le C \mu_k^{\frac{n-2}{4}} .
\eal \]
Hence, with \eqref{esterreur} and \eqref{errvp}, H\"older's inequality shows that
\ben \label{DL31}
 \left |  \int_M \vp_k(t,z) \(\triangle_{\check g} + c_n \Scal_{\check g} - f'(W_{k,t,z}) \) \hat{Z}_{j,k,t,z} \,dv_{\check g}\right| = \smallo \(\mu_k^{\pui}\) .
 \een
Together with \eqref{DL2} and \eqref{DL3}, this shows that 
\ben \label{DL4}
\sum_{j=0}^n |\lambda_k^j(t,z)| = \bigO \( \mu_k^{\pui} \). 
\een
We can now use \eqref{DL00} to write that 
\[ \bal \partial_t \(I\(u_{k,t,z} \)\) & = DI \(u_{k,t,z} \) \(\partial_t u_{k,t,z} \) \\
& =  \sum_{j=0}^n \lambda_k^j(t,z) \big( \hat{Z}_{j,k,t,z}, \partial_t W_{k,t,z} + \partial_t \vp_k(t,z) \big)_{H^1(M)}, 
\eal \]
where $W_{k,t,z}$ is as in \eqref{defW}. On the one side, with  \eqref{Z0}, \eqref{Zi}, \eqref{DLhatZ1} and \eqref{DLhatZ2}, we have 
\[  \big( \hat{Z}_{j,k,t,z}, \partial_t W_{k,t,z} \big)_{H^1(M)} = \frac{n-2}{2t}  \delta_{j0} \Vert \nabla V_0\Vert_{L^2(\R^n)}^2 + \smallo\(1\) \]
as $k \to\infty$, where $\delta_{j0}$ is the Kronecker symbol. On the other side, since $\vp_k(t,z) \in \hat{K}_{k,t,z}^{\perp}$, we have $\big(\vp_k(t,z), \hat{Z}_{j,k,t,z} \big)_{H^1(M)} = 0$ for all $0 \le j \le n$, hence
\[ \big( \hat{Z}_{j,k,t,z}, \partial_t \vp_k(t,z) \big)_{H^1(M)} = - \big( \partial_t \hat{Z}_{j,k,t,z}, \vp_k(t,z) \big)_{H^1(M)} = \smallo\(1\), \]
where we have used  \eqref{esterreur}, \eqref{errvp}, \eqref{DLhatZ1} and \eqref{DLhatZ2}. Together with \eqref{DL4}, we thus obtain that
\be  \bal \partial_t \(I\(u_{k,t,z} \)\) & =   \frac{n-2}{2t}  \delta_{j0} \Vert \nabla V_0\Vert_{L^2(\R^n)}^2 \lambda_k^0(t,z) + \smallo\(\mu_k^{\pui}\). 
\eal \ee
Together with \eqref{DL2}, \eqref{DL3} and \eqref{DL31}, this becomes
\be \bal \partial_t \(I\(u_{k,t,z} \)\) & =  \partial_t \(I\( W_{k,t,z} \)\) + \smallo\(\mu_k^{\pui}\), 
\eal \ee
which concludes the proof of Proposition \ref{DLenergie}. 
\end{proof}

\section{Proof of Theorem \ref{Th1} -- Part $2$: Finding critical points of the reduced energy} \label{secreduced}

Let $n \ge 11$ and let $W: (\R^n)^4 \to \R$ be a four-linear form in $\R^n$ possessing the same symmetries as a Weyl tensor, that is $W_{ijk\ell} = - W_{jik\ell} = - W_{ij \ell k} = W_{k\ell ij}$,  $\sum_{i=1}^n W_{iji\ell} = 0$ and 
\[ W_{ijk\ell} + W_{jki\ell} + W_{kij\ell} = 0 \]
for any $i,j,k,\ell \in \{1, \dotsc, n\}$. Assume that $W$ is not identically zero, that is
\[ |W|^2  = \sum_{i,j,k,\ell=1}^n (W_{ijk\ell})^2 >0. \]
For any $k,\ell \in\{1, \dotsc, n\}$, let 
\ben \label{Tkl}
 T_{k\ell} := \sum_{p,q,r=1}^n \( W_{kpqr} + W_{krqp} \)\( W_{\ell pqr} + W_{\ell rqp} \).  
 \een
Straightforward computations using the symmetries of $W$ show that 
\[ T_{k\ell} = 3 \sum_{p,q,r=1}^n W_{kpqr} W_{\ell pqr} \quad \textrm{ and } \quad    \sum_{k=1}^n T_{kk} = 3|W|^2 > 0. \]
We let, for $x \in \R^n$ and $1 \le i,j \le n$, 
\ben \label{defh}
h(x)_{ij} := \frac13 \sum_{p,q=1}^n  W_{ipjq}x_p x_q.
\een
Using the symmetries of $W$, it is easily seen that $h$ satisfies \eqref{hyph1}. We define, for $t >0$ and $z \in B(0,1)$,
\begin{align}\label{defF}
F(t,z) & := \frac14 \int_{\R^n} \sum_{i,j,p=1}^n h_{ip}\,h_{pj} \,\partial_i B_{t,z} \,\partial_j B_{t,z} \,dx - \frac{n-2}{32(n-1)} \int_{\R^n}\hspace{-1pt}\sum_{i,j,\ell=1}^n (\partial_i h_{j\ell})^2 B_{t,z}^2 \,dx \nonumber\\
&\quad - \frac12 \int_{\R^n}|\nabla R_{t,z}|^2 dx + \Lambda(n) \check{u_0}(x_0) t^{\pui} \nonumber\allowdisplaybreaks\\
& =: F_1(t,z) + F_2(t,z) + F_3(t,z) + \Lambda(n) \check{u}_0(x_0)t^{\pui},
\end{align}
where $\check{u}_0$ is as in \eqref{deftu0} and $x_0$ is as in the beginning of Section \ref{secLS}, $R_{t,z}$ is defined by \eqref{Rktz} for $h$ given by \eqref{defh}, $B_{t,z}$ is as in \eqref{Btz} and $\Lambda(n)$ is the positive numerical constant given by \eqref{defLa}. The function $F$ is smooth in $ (0,  \infty) \times B(0,1)$. By \eqref{Rtz}, the uniqueness of $R_{t,z}$ and since $h(-x) = h(x)$ for all $x \in \R^n$, we have, for any $t >0, z \in B(0,1)$ and $x \in \R^n$, $R_{t,-z}(x) = R_{t,z}(-x)$ and thus $F(t,-z) = F(t,z)$. This shows that, for all $t >0$ and $1 \le i,j,k \le n$, 
\ben \label{dercroisees}
\partial_t\partial_{z_i} F(t,z)_{|z=0} = 0 \quad \textrm{ and } \quad  \partial_t\partial^3_{z_i z_j z_k} F(t,z)_{|z=0} = 0.
\een
For real numbers $p,q \ge 0$ with $p > q+2$, we define
\[ I_p^q := \int_{0}^{ \infty} \frac{r^q}{\(1+r\)^p } dr. \]
The following induction formulas are easily proven (see Aubin \cite{Aubin2}):
\ben \label{indu} I_{p+1}^q = \frac{p-q-1}{p}I_p^q \quad \textrm{ and } \quad I_{p+1}^{q+1} =  \frac{q+1}{p-q-1}I_{p+1}^q. \een
From \eqref{indu} the following expressions are easily obtained:
\begin{multline}\label{indu2}
I_n^{\frac{n}{2}}  = \frac{n}{n-2} I_n^{\pui}, \quad I_n^{\frac{n+2}{2}} = \frac{n+2}{n-4}\frac{n}{n-2} I_n^{\pui},\, 
I_{n+2}^{\frac{n+4}{2}}  = \frac{(n+4)(n+2)}{4(n-2)(n+1)} I_n^{\pui},\\ I_{n+1}^{\frac{n+2}{2}}  =\frac{n+2}{2(n-2)}I_n^{\pui}, \,
I_{n-2}^{\pui}  = \frac{4(n-1)}{n-4}I_n^{\pui} , \, I_{n-2}^{\frac{n}{2}} = \frac{4n(n-1)}{(n-4)(n-6)} I_n^{\pui}.
\end{multline}
Recall that, for any $t>0$ and $z \in \R^n$, we have $\int_{\R^n} B_{t,z}^{2^*} \,dx = K_n^{-n}$. A simple change of variables then yields 
\begin{equation}\label{In}
 I_{n}^{\pui} = \frac{2}{\omega_{n-1}}\( n(n-2) \)^{- \frac{n}{2}} K_n^{-n},
\end{equation}
 where $\omega_{n-1}$ is the volume of the standard unit sphere of dimension $n-1$. This allows us to compute the integrals in \eqref{indu2}. 

\smallskip
We first prove the following lemma which shows the existence of a strict global minimum of $t \mapsto F(t,0)$:

\begin{lemma} \label{lemmez0}
Assume that $n \ge 11$. Then $t\in[0, \infty) \mapsto F(t,0)$ possesses a unique global minimum $t_0 >0$ such that $F(t_0,0) < 0$. This minimum is also non-degenerate, meaning that $\partial_t^2 F(t_0, 0) >0$.
\end{lemma}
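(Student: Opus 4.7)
The plan is to evaluate the four terms in \eqref{defF} at $z=0$, showing that the first three either vanish or reduce to a simple monomial, and then to conclude by elementary one-variable calculus.

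I would first show that both $F_3(t,0)$ and $F_1(t,0)$ vanish. For $F_3$, observe that the right-hand side of \eqref{Rtz} factors through $\sum_{p,q} h_{pq}(\cdot) z_p z_q$, which is identically zero when $z=0$. The uniqueness assertion of Lemma \ref{lemmeR} then forces $R_{t,0}\equiv 0$, so $F_3(t,0) = 0$. For $F_1$, the explicit formula $\partial_i B_{t,0}(x) = -\tfrac{1}{n} t^{\pui}(t^2 + |x|^2/n(n-2))^{-n/2} x_i$ allows one to rewrite the integrand of $F_1(t,0)$ as a nonnegative multiple of
\[ \sum_{p=1}^n \Bigl(\sum_{i=1}^n x_i h_{ip}(x)\Bigr)^{\!2}, \]
which vanishes pointwise by the third constraint in \eqref{hyph1} together with the symmetry of $h$.

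Next, I would compute $F_2(t,0)$ explicitly. Differentiating \eqref{defh} yields $\partial_i h_{j\ell}(x) = \tfrac{1}{3}\sum_q (W_{ji\ell q} + W_{jq\ell i}) x_q$. Squaring and summing over $(i,j,\ell)$ gives a quadratic form in $x$; integration against the radially symmetric weight $B_{t,0}^2$ kills the off-diagonal terms ($q\neq r$), and after relabelling the diagonal coefficient matches $\sum_k T_{kk}$ from \eqref{Tkl}, yielding $\sum_{i,j,\ell,q}(W_{ji\ell q} + W_{jq\ell i})^2 = 3|W|^2$. The remaining one-dimensional integral of $B_{t,0}^2 |x|^2$ is evaluated via the change of variables $x = \sqrt{n(n-2)}\,t\,u$ combined with \eqref{indu2} and \eqref{In}. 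This produces
\[ F_2(t,0) = -c_1 t^4, \qquad c_1 = \frac{n(n-2)^2}{24(n-4)(n-6)}\,|W|^2 K_n^{-n}, \]
which is strictly positive since $|W|^2 > 0$ and $n \ge 11 > 6$. Combining the three computations yields
\[ F(t,0) = -c_1 t^4 + c_2 t^{\pui}, \qquad c_2 := \Lambda(n)\check{u}_0(x_0) > 0. \]

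The conclusion then follows by elementary one-variable calculus. Since $n \ge 11$, we have $\pui > 4$; hence $F(t,0) \sim -c_1 t^4 < 0$ as $t \to 0^+$ while $F(t,0) \to +\infty$ as $t\to +\infty$. Setting $\partial_t F(t,0) = 0$ reduces after cancellation to $t^{(n-10)/2} = \frac{8 c_1}{c_2(n-2)}$, which has a unique positive root $t_0$ precisely because $n \ge 11$ makes the exponent strictly positive. By the asymptotics above, $t_0$ is the global minimizer and satisfies $F(t_0,0) < 0$. Finally, using the critical-point identity to substitute $c_2 t_0^{(n-6)/2} = \frac{8 c_1 t_0^2}{n-2}$ into the second derivative, a direct computation gives
\[ \partial_t^2 F(t_0,0) = 2(n-10)\,c_1 t_0^2, \]
which is strictly positive for $n \ge 11$, establishing non-degeneracy. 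No step presents a genuine obstacle: the whole argument reduces, thanks to the algebraic constraints \eqref{hyph1}, to showing that the single remaining nonzero contribution $F_2(t,0)$ is a negative multiple of $t^4$, and the exponent comparison with the leading attractive term $t^{\pui}$ is precisely what forces the dimensional restriction $n\ge 11$.
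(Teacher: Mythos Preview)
Your proposal is correct and follows essentially the same approach as the paper: show $F_1(t,0)=F_3(t,0)=0$ via the algebraic constraints \eqref{hyph1} and the uniqueness in \eqref{Rtz}, compute $F_2(t,0)$ as a negative multiple of $t^4$ (your constant $c_1$ matches the paper's $\frac{n(n-2)^2}{72(n-4)(n-6)}K_n^{-n}\sum_k T_{kk}$ since $\sum_k T_{kk}=3|W|^2$), and conclude from the exponent comparison $4<\frac{n-2}{2}$. The only difference is that you carry out the one-variable calculus explicitly (locating $t_0$ and verifying $\partial_t^2 F(t_0,0)=2(n-10)c_1 t_0^2>0$), whereas the paper simply asserts the conclusion from the form of $F(t,0)$.
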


\begin{proof}
First, by \eqref{Rtz} and the uniqueness of $R_{t,z}$, we have $R_{t,0} = 0$, hence $F_3(t,0) =0$ for all $t>0$. Let $t >0$ and $z \in B(0,1)$. Since $\sum_{j=1}^n x_j h(x)_{ij} = 0$ for all $1 \le i \le n$ and by definition of $B_{t,z}$, we have
\begin{align}\label{F1}
&\frac14 \sum_{i,j,p =1}^n h(x)_{ip}h(x)_{pj} \,\partial_i B_{t,z}(x) \,\partial_j B_{t,z} (x)\nonumber\\
&\qquad= \frac{1}{4 n^2} \frac{t^{n-2}}{\(t^2 + \frac{|x-z|^2}{n(n-2)}\)^{n}} \sum_{i,j,p=1}^n h(x)_{ip} h(x)_{pj} z_i z_j.  
\end{align}
Thus $F_1(t,0) = 0$ and $F(t,0) = F_2(t,0) + \Lambda(n)\check{u}_0(x_0) t^{\pui}$ for any $t >0$. By \eqref{defh}, we have, for all $x \in \R^n$, 
\ben\label{partialih}
\partial_i h_{j\ell}(x) = \frac13 \sum_{p=1}^n (W_{j i \ell p} + W_{j p \ell i })x_p.
\een 
Hence, by \eqref{defF}, we have 
\[ F_2(t,0) = -  \frac{n-2}{32(n-1)} t^4  \int_{\mathbb{S}^{n-1}}  \sum_{i,j,\ell=1}^n (\partial_i h_{j\ell}(x))^2 d \sigma(x) \cdot \int_0^{\infty} \frac{r^{n+1}dr}{\big(1+\frac{r^2}{n(n-2)}\big)^{n-2}}. \]
 Recall that 
\ben \label{intS1}
\int_{\mathbb{S}^{n-1}} x_i x_j\,d \sigma(x) = \frac{\omega_{n-1}}{n} \delta_{ij} 
\een
for all $i,j \in \{1, \dotsc, n\}$. As a consequence, we have 
$$ \int_{\mathbb{S}^{n-1}}  \sum_{i,j,\ell=1}^n (\partial_i h_{j\ell}(x))^2 d \sigma(x) = \frac{\omega_{n-1}}{9n} \sum_{k=1}^n T_{kk}, $$
where $T_{kk}$ is as in \eqref{Tkl}. Independently, straightforward computations with \eqref{indu2} and \eqref{In} give that
\[ \int_0^{\infty} \frac{r^{n+1} dr}{\big(1+\frac{r^2}{n(n-2)}\big)^{n-2}} = \frac{4n^2(n-1)(n-2)}{(n-4)(n-6)}\frac{K_n^{-n}}{\omega_{n-1}}. \]
This shows that 
\ben \label{DLWeyl}
 F(t,0) = - \(  \frac{n(n-2)^2}{72(n-4)(n-6)}K_n^{-n}\sum_{k=1}^n T_{kk} \) t^4 + \Lambda(n) \check{u}_0(x_0) t^{\pui}. 
 \een
Since $\Lambda(n) >0$, $\check{u}_0>0$ in $M$, $\sum_{k=1}^n T_{kk} >0$ and  $n \ge 11$, Lemma \ref{lemmez0} follows.
\end{proof}

The value of $t_0$ only depends on $n,W$ and $\check{u}_0$, that have been fixed from the beginning. From now on, we will let $A>0$ be chosen large enough so that $t_0 \in [2/A, A/2]$. We now show that the second derivatives of $F$ at $(t,0)$ in $z$ vanish for every $t >0$:

\begin{proposition} \label{propder2}
Let $n \ge 11$ and let $t >0$ be fixed. Then, for any $1 \le k, \ell \le n$,
\[ \partial^2_{z_k z_\ell} F(t,z)_{|z=0}  =0.\]
\end{proposition}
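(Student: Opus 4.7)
The strategy is to decompose $F$ according to \eqref{defF} and handle the four summands separately: the last term $\Lambda(n)\check{u}_0(x_0) t^{\pui}$ is $z$-independent, $F_3$ is $O(|z|^4)$ near $z=0$, and the Hessians of $F_1$ and $F_2$ cancel each other. For $F_3$, note that the source of the defining equation \eqref{Rtz} for $R_{t,z}$ carries the explicit factor $\sum_{p,q} z_p z_q\, h_{pq}(\cdot)$, which vanishes to second order in $z$. Setting $z=0$ in \eqref{Rtz} gives a homogeneous equation for $R_{t,0}\in K_{t,0}^{\perp}$, so by the uniqueness in Lemma \ref{lemmeR}, $R_{t,0}=0$. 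Differentiating \eqref{Rtz} once in $z_k$ at $z=0$, together with the corresponding differentiated orthogonality condition coming from $R_{t,z} \in K_{t,z}^{\perp}$, yields a homogeneous equation for $\partial_{z_k} R_{t,0}$ in $K_{t,0}^{\perp}$, hence $\partial_{z_k} R_{t,0}=0$. Consequently $F_3(t,z) = -\tfrac12 \int |\nabla R_{t,z}|^2 dx = O(|z|^4)$ and $\partial^2_{z_k z_\ell} F_3(t,0)=0$.

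For $F_1$, using the algebraic identity $\sum_i x_i\, h_{ip}(x)=0$ from \eqref{hyph1}, the integrand rewrites as
\[
F_1(t,z)=\frac{t^{n-2}}{4n^2}\int_{\R^n}\frac{\sum_{i,j,p} z_i z_j\, h_{ip}(x)\,h_{pj}(x)}{(t^2+|x-z|^2/(n(n-2)))^n}\,dx,
\]
which is manifestly $O(|z|^2)$; differentiating twice at $z=0$ yields $\partial^2_{z_k z_\ell} F_1(t,0) = \tfrac{t^{n-2}}{2n^2}\int \sum_p h_{kp}(x)h_{p\ell}(x)/(t^2+|x|^2/(n(n-2)))^n\, dx$. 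For $F_2$, the substitution $y=x-z$ moves the $z$-dependence entirely onto the factor $(\partial_i h_{j\ell'}(y+z))^2$; since each $\partial_i h_{j\ell'}$ is linear in its argument, this factor is quadratic in $z$ and the resulting Hessian reduces to an explicit Weyl contraction times $\int_{\R^n} B_{t,0}^2\, dy$.

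The final step identifies both Hessians with scalar multiples of the tensor $T_{k\ell}$ from \eqref{Tkl}. Expanding $h_{kp}h_{p\ell}$ in Weyl components via \eqref{defh} and performing angular integration via the standard formula $\int_{\mathbb{S}^{n-1}}\theta_a\theta_b\theta_c\theta_d\,d\sigma = \tfrac{\omega_{n-1}}{n(n+2)}(\delta_{ab}\delta_{cd}+\delta_{ac}\delta_{bd}+\delta_{ad}\delta_{bc})$, the Weyl tracelessness $\sum_a W_{iaja}=0$ annihilates the $\delta_{ab}\delta_{cd}$ contribution. The two remaining contractions $\sum_{p,a,b} W_{kapb}W_{pa\ell b}$ and $\sum_{p,a,b} W_{kapb}W_{pb\ell a}$ simplify via pair-swap/antisymmetry and the first Bianchi identity, summing to $\tfrac12 T_{k\ell}$. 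A parallel manipulation of the contraction appearing in $\partial^2_{z_k z_\ell}F_2(t,0)$ gives $T_{k\ell}$. The radial integrals are evaluated via \eqref{indu}--\eqref{indu2} and normalized by \eqref{In}, yielding
\[
\partial^2_{z_k z_\ell}F_1(t,0) = \frac{(n-2)\,t^2}{36(n-4)}K_n^{-n}\,T_{k\ell} = -\partial^2_{z_k z_\ell}F_2(t,0),
\]
so the two Hessians cancel, proving the claim. The main obstacle is the combinatorial bookkeeping of the Weyl-tensor contractions, which requires iterated application of the first Bianchi identity together with the pair-swap and antisymmetry relations in order to express each trace as a specific rational multiple of $T_{k\ell}$.
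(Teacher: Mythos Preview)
Your proposal is correct and follows essentially the same route as the paper's own proof: you split $F$ into its four summands, show $\partial^2_{z_kz_\ell}F_3(t,0)=0$ via $R_{t,0}=\partial_{z_k}R_{t,0}=0$, and then compute the Hessians of $F_1$ and $F_2$ explicitly as $\pm\frac{(n-2)t^2}{36(n-4)}K_n^{-n}T_{k\ell}$, which cancel. The only cosmetic difference is that the paper outsources the fourth-moment angular integration to a lemma of Brendle, whereas you carry it out directly using the standard formula for $\int_{\mathbb{S}^{n-1}}\theta_a\theta_b\theta_c\theta_d\,d\sigma$ and the Weyl symmetries; the Bianchi-identity bookkeeping you highlight is exactly what underlies that lemma.
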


\begin{proof}
Let $t>0$ and $k,\ell \in \{1, \dotsc, n \}$ be fixed throughout this proof. 
First, by \eqref{Rtz} and the uniqueness of $R_{t,z}$, we have $R_{t,0} = 0$ and $\partial_{z_i}\( R_{t,z}\)_{|z=0}  = 0$ for all $1 \le i \le n$. As a consequence,
$ \partial^2_{z_k z_\ell} F_3(t,z)_{|z=0}=0$. On the one hand, with \eqref{F1} and differentiating under the integral, we get that 
\begin{align*}
\partial^2_{z_k z_\ell}  F_1(t,z)_{|z=0}&= \frac{1}{2n^2} \int_{\R^n }  \sum_{p=1}^n h(x)_{kp}\,h(x)_{p\ell} \frac{t^{n-2}dx}{\big(t^2 + \frac{|x|^2}{n(n-2)}\big)^{n}} .\\
&  = \frac{t^2}{2 n^2} \int_{\mathbb{S}^{n-1}}  \sum_{p=1}^n h(x)_{kp}\,h(x)_{p\ell}\,d \sigma(x) \cdot \int_{0}^{ \infty} \frac{r^{n+3} dr}{\big(1+ \frac{r^2}{n(n-2)}\big)^n}. 
\end{align*}
By using the symmetries of $W$, it is easily seen that
\[ \sum_{p,i,j=1}^n W_{kpij} \( W_{\ell pij} + W_{\ell jip} \) = \frac12 T_{k \ell}, \]
where $T_{k \ell}$ is as in \eqref{Tkl}. Using \eqref{defh} and Corollary 29 of Brendle \cite{Brendle}, we thus obtain 
$$\int_{\mathbb{S}^{n-1}}  \sum_{p=1}^n h(x)_{kp}\,h(x)_{p\ell}\,d \sigma(x) = \frac{\omega_{n-1}}{18n(n+2)} T_{k\ell}  .$$
Together with \eqref{indu2} and \eqref{In}, we then obtain 
\ben \label{F11}
\partial^2_{z_k z_\ell} F_1(t,z)_{|z=0} =  \frac{t^2}{36} \frac{n-2}{n-4} K_n^{-n}T_{k\ell}. 
\een
By using \eqref{partialih} together with change of variables, we obtain
\[  F_2(t,z) = -  \frac{n-2}{288(n-1)} \int_{\R^n} \sum_{p,q=1}^n T_{pq}(x+z)_p (x+z)_q \frac{t^{n-2} dx}{\big(t^2+\frac{|x|^2}{n(n-2)}\big)^{n-2}}, \]
so that differentiating and using \eqref{indu2}, \eqref{In} and \eqref{intS1} yields
\ben \label{F22} \bal  \partial^2_{z_k z_\ell} F_2(t,z)_{|z=0} & =  - \frac{n-2}{144(n-1)} T_{k \ell}  \int_{\R^n} \frac{t^{2}dx}{\big(1+\frac{|x|^2}{n(n-2)}\big)^{n-2}} = -  \frac{t^2}{36} \frac{n-2}{n-4} K_n^{-n}T_{k\ell}. \eal \een
Together with \eqref{F11}, this concludes the proof of Proposition \ref{propder2}. 
\end{proof}

In order to prove that $(t_0,0)$ is a non-degenerate critical point of $F$, we now expand, for a fixed value of $t$, $z \mapsto F(t,z)$ to the fourth order as $z \to 0$. A first, obvious, remark is that for $i,j,k,l \in \{1, \dotsc, n\}$, we have 
\ben \label{der4F2}
\partial_{z_i z_j z_k z_l}^4 F_2(t,z)_{|z=0} = 0.
\een
This easily follows from the expression of $F_2$ given by \eqref{defF}. Indeed, by \eqref{defh},  $\sum_{i,j,\ell=1}^n (\partial_i h_{j\ell}(x))^2$ is a second-order polynomial in $x$, hence $F_2(t,z)$ is a second-order polynomial in $z$ by a change of variables. We now expand $F_1+F_2$: 

\begin{proposition} \label{propF1}
Let $t >0$. We have, as $z \to 0$,
\[ F_1(t,z) + F_2(t,z) = F_2(t,0)  +\frac{K_n^{-n}}{4 n} \sum_{p,q=1}^n \big(h_{pq}(z) \big)^2 
+ \bigO\(|z|^5\).\]
\end{proposition}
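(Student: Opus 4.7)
The plan is to expand $F_1(t,z)$ and $F_2(t,z)$ separately as polynomials in $z$ around $z=0$, exploiting the constraint $\sum_j x_j h_{ij}(x)=0$ from \eqref{hyph1} to reduce each to a low-degree polynomial, and then combine with Proposition \ref{propder2} to isolate the fourth-order term. Observe at the outset that $F_1(t,0)=0$ (by the same identity, as seen at the start of the proof of Lemma \ref{lemmez0}), so the expansion reduces to pinning down the fourth-order-in-$z$ part of $F_1(t,z)+(F_2(t,z)-F_2(t,0))$; by Proposition \ref{propder2}, the second-order-in-$z$ part cancels.

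First, the analysis of $F_2$ is straightforward. Since $P(x):=\sum_{i,j,\ell}(\partial_i h_{j\ell}(x))^2$ is a homogeneous polynomial of degree $2$ in $x$ by \eqref{partialih}, the change of variables $x=y+z$ in the integral defining $F_2$ combined with the exact decomposition $P(y+z)=P(y)+B(y,z)+P(z)$ (with $B$ bilinear) shows, by parity in $y$ against the radial kernel $t^{n-2}(t^2+|y|^2/(n(n-2)))^{-(n-2)}$, that $F_2(t,z)-F_2(t,0)$ is an exact quadratic polynomial in $z$ and contributes nothing at fourth order.

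For $F_1$, the key step is to use $\sum_i x_i h_{ip}(x)=0$ at $x=y+z$ to rewrite $\sum_i h_{ip}(y+z)z_i=-\sum_i h_{ip}(y+z)y_i$. After the change of variables $x=y+z$ in the integral defining $F_1$, this yields the symmetrized form
\[ F_1(t,z)=\frac{1}{4n^2}\int_{\R^n}K(y)\sum_p \phi_p(y,z)^2\,dy,\]
where $K(y):=t^{n-2}(t^2+|y|^2/(n(n-2)))^{-n}$ and $\phi_p(y,z):=\sum_i h_{ip}(y+z)y_i$. The exact expansion $h_{ip}(y+z)=h_{ip}(y)+L_{ip}(y,z)+h_{ip}(z)$ (valid since $h$ is quadratic), with $L_{ip}(y,z):=\sum_k \partial_k h_{ip}(y)\,z_k$, combined with $\sum_i h_{ip}(y)y_i=0$, gives the bidegree decomposition $\phi_p=A_p+B_p$, where $A_p(y,z):=\sum_i L_{ip}(y,z)y_i$ has bidegree $(2,1)$ in $(y,z)$ and $B_p(y,z):=\sum_i y_i\,h_{ip}(z)$ has bidegree $(1,2)$. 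Expanding $\phi_p^2=A_p^2+2A_pB_p+B_p^2$, the cross term is odd in $y$ and vanishes after integration against $K$, the $A_p^2$ term produces the quadratic-in-$z$ part of $F_1$ that cancels $F_2(t,z)-F_2(t,0)$ by Proposition \ref{propder2}, and only $B_p^2$ survives at fourth order in $z$.

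To conclude, the routine computation $\int_{\R^n}K(y)y_iy_j\,dy=nK_n^{-n}\delta_{ij}$, obtained via the substitution $y=t\sqrt{n(n-2)}\,u$ and the identities \eqref{indu2} and \eqref{In}, gives
\[ \frac{1}{4n^2}\int_{\R^n}K(y)\sum_{i,j,p}h_{ip}(z)h_{pj}(z)y_iy_j\,dy=\frac{nK_n^{-n}}{4n^2}\sum_{i,p}(h_{ip}(z))^2=\frac{K_n^{-n}}{4n}\sum_{p,q}(h_{pq}(z))^2,\]
where we used $h_{ip}=h_{pi}$ in the last step. The main obstacle is recognizing that applying the constraint $\sum_i x_i h_{ip}(x)=0$ at $x=y+z$ is exactly what yields the clean bidegree factorization $\phi_p=A_p+B_p$, so that the orders in $z$ are paired with opposite parities in $y$; without this symmetrization the fourth-order coefficient would have to be extracted from a combinatorially heavy quartic contraction of $W$.
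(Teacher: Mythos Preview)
Your proof is correct and takes a genuinely different route from the paper's. Both proofs start from the identity \eqref{F1} and the change of variables $x=y+z$, but from there they diverge. The paper keeps the factor $\sum_{i,j,p}h_{ip}(y+z)h_{pj}(y+z)z_iz_j$, expands $h$ via its definition \eqref{defh} into a four-fold product $(x+z)_a(x+z)_b(x+z)_c(x+z)_d$, integrates the quadratic-in-$y$ pieces against the radial kernel, and then simplifies a sum of six Weyl-tensor contractions using the algebraic symmetries of $W$ to reach $\sum_{p,q}h_{pq}(z)^2$. Your key move is instead to re-apply the constraint $\sum_i x_i h_{ip}(x)=0$ at the shifted point $x=y+z$, converting $\sum_i z_i h_{ip}(y+z)$ into $-\sum_i y_i h_{ip}(y+z)=-\phi_p$. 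Combined with the exact quadratic expansion of $h$, this gives the clean bidegree splitting $\phi_p=A_p+B_p$, after which parity in $y$ and Proposition~\ref{propder2} immediately isolate the $B_p^2$ contribution already in the form $\sum_i y_i h_{ip}(z)$, so no Weyl-index manipulation is needed. A pleasant by-product of your argument is that the expansion is in fact exact: $F_1(t,z)+F_2(t,z)-F_2(t,0)$ is a polynomial in $z$ with only quadratic and quartic parts, so the $\bigO(|z|^5)$ remainder is identically zero. The paper's route is more hands-on but self-contained; yours is shorter and more structural, at the price of invoking the cancellation \eqref{F11}+\eqref{F22} from the proof of Proposition~\ref{propder2} rather than recomputing it.
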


\begin{proof}
By definition, $F_1$ and $F_2$ are even in $z$. The expansions to second-order thus follow from $F_1(t,0) = 0$, \eqref{F11} and \eqref{F22}. By \eqref{der4F2}, we only have to expand $F_1(t,z)$ to fourth order. By \eqref{defF}, \eqref{F1} and a change of variables, we have 
\[ F_1(t,z) =  \frac{1}{4n^2} \int_{\R^n} \frac{t^{n-2}}{\big(t^2 + \frac{|x|^2}{n(n-2)}\big)^{n}}  \sum_{i,j,p=1}^n h(x+z)_{ip}\,h(x+z)_{pj} z_i z_j\,dx.\]
It is easily seen that, for any $x,z \in \R^n$ and $1\le a,b,c,d\le n,$
\begin{multline*}
 (x+z)_a(x+z)_b(x+z)_c(x+z)_d  =x_ax_bx_cx_d + x_ax_bx_c z_d + x_ax_bx_d z_c + x_ax_cx_d z_b  \\
+ x_bx_cx_d z_a+ x_ax_b z_c z_d + x_a x_c z_b z_d + x_a x_d z_b z_c + x_b x_c z_a z_d + x_b x_d z_a z_c + x_c x_d z_a z_b\\+ \bigO\(|z|^3|x|+|z|^4\).
\end{multline*}
As a consequence and by \eqref{defh}, we have, as $z \to 0$,
$$F_1(t,z) + F_2(t,z)=  F_2(t,0)+G(z) + \bigO\(|z|^5\),$$
where
\begin{multline*}
G(z):=\frac{1}{36n^2} \int_{\R^n} \(1 + \frac{|x|^2}{n(n-2)}\)^{-n} \sum_{i,j,p,a,b,c,d=1}^n W_{iapb}W_{pcjd} z_iz_j( x_ax_b z_c z_d  \\
+ x_a x_c z_b z_d+ x_a x_d z_b z_c + x_b x_c z_a z_d + x_b x_d z_a z_c + x_c x_d z_a z_b)\,dx.   \end{multline*}
By \eqref{indu2}, \eqref{In} and \eqref{intS1}, we have, for $1 \le a,b \le n$,
\[  \int_{\R^n} \frac{x_ax_b \,dx}{\big(1+ \frac{|x|^2}{n(n-2)}\big)^{n}}  = n K_n^{-n} \delta_{ab}. \]
Since $W$ is totally traceless, we thus obtain 
\begin{multline}\label{der41}
G(z) =  \frac{1}{36n}K_n^{-n} \sum_{i,j,p,a,b,c=1}^n \big( W_{iapb} W_{pajc} z_bz_cz_iz_j + W_{iapb} W_{pcja} z_b z_c z_i z_j \\
 + W_{iapb}W_{pbjc} z_a z_c z_i z_j +  W_{iapb} W_{pcjb} z_a z_c z_i z_j\big).
\end{multline} 
Since $W_{pajc}=-W_{pacj}$, we have $\sum_{i,j,p,a,b,c=1}^n W_{iapb} W_{pajc} z_bz_cz_iz_j=0$. Similarly, we obtain that the last two terms in the right-hand side of \eqref{der41} vanish. Since moreover $W_{pcja} = W_{japc} $, we then obtain
\begin{align*}
 G(z)& = \frac{1}{36n} K_n^{-n} \sum_{i,j,p,a,b,c=1}^n W_{iapb} W_{japc} z_b z_c z_i z_j \\
& =  \frac{1}{36n} K_n^{-n} \sum_{a,p=1}^n \( -\sum_{i,b=1}^n W_{iabp} z_i z_b \)^2 =\frac{1}{4n} K_n^{-n} \sum_{a,p=1}^n \( h_{ap}(z) \)^2 
 \end{align*} 
 by \eqref{defh}, which concludes the proof of Proposition \ref{propF1}. 
\end{proof}
 
We next expand $F_3(t,z)$ at fourth-order in $z$ at $(t,0)$ for some fixed $t >0$:

\begin{proposition} \label{propF3}
Let $t >0$. As $z \to 0$, we have 
\[ F_3(t,z) = - \frac{n+4}{48(n+1)}K_n^{-n} \sum_{p,q=1}^n \(h_{pq}(z)\)^2 + \bigO\(|z|^5\).\]
\end{proposition}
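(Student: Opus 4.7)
The plan is to solve for $R_{t,z}$ explicitly at leading order in $z$ near $z=0$, and then perform the resulting integral.

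First, since the source in \eqref{Rtz} has a $z_p z_q$ factor, it vanishes quadratically as $z\to 0$, so one expects $R_{t,z}=O(|z|^2)$. More precisely, $R_{t,0}\equiv 0$ is immediate, and differentiating \eqref{Rtz} once in $z_j$ at $z=0$ together with the injectivity of $L_{t,0} := \triangle_{\delta_0}-(2^*-1)B_{t,0}^{2^*-2}$ on $K_{t,0}^{\perp}$ (Bianchi--Egnell \cite{BianchiEgnell}) gives $\partial_{z_j}R_{t,z}|_{z=0}=0$. Writing the Taylor expansion $R_{t,z}=R_{t,z}^{(2)}+R_{t,z}^{(3)}+\cdots$ with $R^{(k)}$ homogeneous of degree $k$ in $z$, the cross terms in $|\nabla R_{t,z}|^2$ are of order at least $|z|^5$, so
\[
F_3(t,z)=-\tfrac12 \int_{\R^n} |\nabla R^{(2)}_{t,z}|^2\,dy + \bigO\(|z|^5\).
\]

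The heart of the proof is the explicit determination of $R^{(2)}_{t,z}$. Differentiating \eqref{Rtz} twice in $z$ at $z=0$ yields
\[
L_{t,0} R^{(2)}_{t,z}(y) = -\frac{t^{(n-2)/2}}{n(n-2)}\cdot \frac{\sum_{p,q} z_p z_q\, h_{pq}(y)}{A(|y|)^{(n+2)/2}},\qquad A(r):=t^2+\tfrac{r^2}{n(n-2)}.
\]
The crucial observation is that $h_{pq}(y)=\tfrac{1}{3}\sum_{a,b}W_{paqb}y_ay_b$ is a \emph{harmonic} polynomial of degree $2$ in $y$, since $\triangle_y h_{pq}=-\tfrac{2}{3}\sum_a W_{paqa}=0$ by tracelessness of $W$. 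Because $L_{t,0}$ is rotationally invariant and respects the spherical-harmonic decomposition, we can look for $R^{(2)}_{t,z}$ in the form
\[R^{(2)}_{t,z}(y) = \(\sum_{p,q}z_p z_q\, h_{pq}(y)\)\phi_t(|y|)\]
with $\phi_t$ a radial function. Substituting and using Euler's identity $\sum_i y_i\partial_i h_{pq}=2h_{pq}$ reduces the PDE to a second-order linear ODE for $\phi_t$ that is solved by the explicit ansatz $\phi_t(r)=C/A(r)^{n/2}$; matching coefficients gives $\phi_t(r)=-t^{(n-2)/2}/\bigl(2n\, A(r)^{n/2}\bigr)$. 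The orthogonality to $K_{t,0}$ is automatic: spherical harmonics of degree $2$ are orthogonal to the mode-$0$ and mode-$1$ elements spanning $K_{t,0}$.

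Finally we compute $\int |\nabla R^{(2)}|^2\,dy$. Setting $H(y;z):=\sum_{p,q} z_p z_q h_{pq}(y)$, the explicit expression $R^{(2)}=-\tfrac{t^{(n-2)/2}}{2n}H/A^{n/2}$ combined with the integration-by-parts identity
\[
\int_{\R^n} |\nabla R^{(2)}|^2\,dy = (2^*-1)\int B_{t,0}^{2^*-2}(R^{(2)})^2\,dy + \int R^{(2)}\, L_{t,0}R^{(2)}\,dy
\]
reduces everything to integrals of the form $\int H^2/A^\alpha\,dy$ with $\alpha=n+1,n+2$. Expanding $H^2$ as a polynomial in $y_ay_by_cy_d$ and using the rotational identity
\[
\int_{\R^n} y_ay_by_cy_d/A^\alpha\,dy = K_\alpha (\delta_{ab}\delta_{cd}+\delta_{ac}\delta_{bd}+\delta_{ad}\delta_{bc}),
\]
the tracelessness of $W$ kills the first Kronecker combination, while the pair-swap symmetry $W_{ijk\ell}=W_{k\ell ij}$ shows that the remaining two contractions each equal $9\sum_{p,q}(h_{pq}(z))^2$ after summing against $z_p z_q z_{p'} z_{q'}$. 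Evaluating the $K_\alpha$ by the change of variables $r=ts$ and the integral formulas \eqref{indu2}--\eqref{In} expresses the result in terms of $K_n^{-n}$; plugging everything back and simplifying yields the stated coefficient.

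The main obstacle is the tensorial bookkeeping in the last step: recognizing that the two surviving $W\!\cdot\! W$ contractions (which at first glance pair the $z$'s with different positions of $W$ than in the target $\sum(h_{pq}(z))^2$) are in fact equal to it, by a careful use of both pair-swap symmetry and of the symmetrization induced by the $z_pz_q$ factors. The closed-form solution of the radial ODE, which crucially relies on the harmonicity of $h_{pq}$, is what makes the whole computation tractable.
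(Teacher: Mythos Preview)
Your proposal is correct and follows essentially the same approach as the paper's proof: both exploit that $R_{t,z}$ vanishes to second order at $z=0$, both solve explicitly for the quadratic part via the radial ansatz $h_{ab}(y)\,\phi(|y|^2)$ relying on the harmonicity of $h_{ab}$, and both reduce the final $\int|\nabla R^{(2)}|^2$ to moment integrals evaluated through \eqref{indu2}--\eqref{In}. The only organizational differences are that the paper works componentwise with $L_{ab}:=\partial^2_{z_az_b}R_{t,z}|_{z=0}$ and expands $\langle\nabla L_{ab}^0,\nabla L_{cd}^0\rangle$ directly into three pieces, whereas you keep the contracted object $H(y;z)=\sum_{p,q}z_pz_qh_{pq}(y)$ and use the integration-by-parts identity to reduce to two integrals $\int H^2/A^{n+1}$ and $\int H^2/A^{n+2}$; and the paper uses the translation $R_{t,z}(x)=\tilde R_{t,z}(x-z)$ with $\tilde R_{t,z}\in K_{t,0}^\perp$ to make the orthogonal complement independent of $z$, which cleanly justifies that all $z$-derivatives at $z=0$ lie in $K_{t,0}^\perp$ (a point you use implicitly when invoking injectivity of $L_{t,0}$ on $K_{t,0}^\perp$).
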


\begin{proof}
By \eqref{Rtz}, for $(t,z) \in (0, \infty) \times \R^n$, we have $R_{t,z} = \tilde{R}_{t,z}(x-z)$, where $\tilde{R}_{t,z}$ is the unique solution in $K_{t,0}^\perp$ of 
\ben \label{derF31}
 \triangle_{\delta_0} \tilde{R}_{t,z}(x) - (2^*-1) B_{t,0}(x)^{2^*-2} \tilde{R}_{t,z}(x) = - \frac{t^{\pui}}{n(n-2)}  \frac{ \sum_{p,q=1}^n h_{pq}(x+z) z_p z_q}{\big(t^2 + \frac{|x|^2}{n(n-2)} \big)^{\frac{n+2}{2}}}, \een
 where $h$ is given by \eqref{defh}. Hence, for fixed $t >0$, all the derivatives of $R_{t,z}$ with respect to $z$, taken at $z=0$, belong to $K_{t,0}^{\perp}$. We let in what follows, for $a,b \in \{1, \dotsc, n \}$, 
\[ L_{ab}: = \partial^2_{z_a z_b}\( R_{t,z}\)_{|z=0} \in K_{t,0}^{\perp}.\]
Since $R_{t,0} = 0$ and $\partial_{z_i} \(R_{t,z}\)_{|z=0}  = 0$ for all $1 \le i \le n$, it is easily seen that, for $1 \le a,b,c,d \le n$, we have
\ben \label{derF32}
\begin{aligned}
\partial_{z_a} F_3(t,z)_{|z=0} & = 0, \quad \partial^{2}_{z_a z_b} F_3(t,z)_{|z=0}  = 0, \quad \partial^{3}_{z_a z_b z_c} F_3(t,z)_{|z=0}  = 0 \quad \text{ and } \\
\partial_{z_a z_b z_c z_d}^4 F_3(t,z)_{|z=0} & = - \int_{\R^n} \( \langle \nabla L_{ab}, \nabla L_{cd} \rangle +  \langle \nabla L_{ac}, \nabla L_{bd} \rangle +  \langle \nabla L_{ad}, \nabla L_{bd} \rangle \) dx.
\end{aligned}
\een
Differentiating \eqref{derF31} shows that $L_{ab}$ satisfies
\[   \triangle_{\delta_0} L_{ab}(x) - (2^*-1) B_{t,0}^{2^*-2} L_{ab}(x) = -\frac{2t^{\pui}}{n(n-2)}  \frac{h_{ab}(x)}{\big(t^2 + \frac{|x|^2}{n(n-2)} \big)^{\frac{n+2}{2}}} . \]
In particular, for any $x \in \R^n$, we have $L_{ab}(x) = t^{1 - \frac{n}{2}} L_{ab}^0(x/t)$, where $L_{ab}^0 \in K_{1,0}^\perp$ satisfies
\ben \label{derF33}
\triangle_{\delta_0} L_{ab}^0(x) - (2^*-1) B_{1,0}^{2^*-2} L_{ab}^0(x)  = -\frac{2}{n(n-2)}  \frac{h_{ab}(x)}{\big(1 + \frac{|x|^2}{n(n-2)} \big)^{\frac{n+2}{2}}}.
\een
Coming back to \eqref{derF32}, we also have, with a change of variables,
\ben \label{derF34}
\partial_{z_a z_b z_c z_d}^4 F_3(t,z)_{|z=0} = - \int_{\R^n} \( \langle \nabla L_{ab}^0, \nabla L_{cd}^0 \rangle +  \langle \nabla L_{ac}^0, \nabla L_{bd}^0 \rangle +  \langle \nabla L_{ad}^0, \nabla L_{bd}^0 \rangle \) dx.
\een
We first find an explicit expression for $L_{ab}^0$. Since $W$ is totally traceless, $h$ is a harmonic polynomial in $\R^n$. As a consequence, we can look for a solution of \eqref{derF33} under the form 
\[ L_{ab}^0(x) = h_{ab}(x) f(|x|^2) 
. \]
Simple computations show that $f(t) = - \frac{1}{n} \big(1+ \frac{t}{n(n-2)}\big)^{-n/2}$ satisfies \eqref{derF33} for all $t \in \R$. By using \eqref{defh}, we thus obtain
\ben \label{derF35}
L_{ab}^0(x) = - \frac{1}{n} \frac{h_{ab}(x)}{\big(1+ \frac{|x|^2}{n(n-2)}\big)^{\frac{n}{2}}}= - \frac{1}{3n} \frac{\sum_{p,q=1}^n W_{apbq}x_px_q}{\big(1+ \frac{|x|^2}{n(n-2)}\big)^{\frac{n}{2}}}
\een
for $1 \le a,b \le n$ and all $x \in \R^n$. Note that $L_{ab}^0$ given by \eqref{derF35} belongs to $K_{1,0}^\perp$, which follows again from the symmetries of $W$, and is thus the unique solution of \eqref{derF33} in $K_{1,0}^\perp$. Straightforward computations with \eqref{derF35} then show that 
\ben \label{derF352}
\int_{\R^n}  \langle \nabla L_{ab}^0, \nabla L_{cd}^0 \rangle \,dx = K_1 + K_2 + K_3, \een
where
\[ \begin{aligned}
A_1 & := \frac{1}{9n^2} \int_{\R^n} \(1+ \frac{|x|^2}{n(n-2)}\)^{-n} \sum_{i,p,q=1}^n (W_{aibp} + W_{apbi})(W_{cidq} + W_{cqdi})x_p x_q \,dx,\allowdisplaybreaks\\
A_2 & :=  \frac{1}{n^2(n-2)^2}  \int_{\R^n} \(1+ \frac{|x|^2}{n(n-2)}\)^{-n-2}|x|^2h_{ab}(x)\,h_{cd}(x) \,dx,\allowdisplaybreaks\\
A_3 & := - \frac{4}{n^2(n-2)} \int_{\R^n} \(1+ \frac{|x|^2}{n(n-2)}\)^{-n-1} h_{ab}(x) h_{cd}(x) \,dx. 
\end{aligned} \]
By \eqref{indu2}, \eqref{In} and \eqref{intS1}, we have 
\[ \bal A_1 & = \frac{1}{9n} K_n^{-n} \sum_{p,q=1}^n \( W_{apbq} + W_{aqbp}\) \(W_{cpdq} + W_{cqdp} \) \\
& = \frac{2}{9n} K_n^{-n}  \sum_{p,q=1}^n W_{apbq}\(W_{cpdq} + W_{cqdp} \). \eal \]
Using again Corollary 29 of Brendle \cite{Brendle}, we have 
\[ \int_{\mathbb{S}^{n-1}} h_{ab}(x)\,h_{cd}(x)\,d \sigma(x) = \frac{1}{9n(n+2)} \omega_{n-1} \sum_{p,q=1}^nW_{apbq} \( W_{cpdq} + W_{cqdp} \).\]
Together with \eqref{indu2} and \eqref{In}, we thus obtain
\[ \begin{aligned}
A_2 & = \frac{1}{n^2(n-2)^2} \int_0^{\infty} \frac{r^{n+5}}{(1+\frac{r^2}{n(n-2)})^{n+2}}dr \cdot\int_{\mathbb{S}^{n-1}} h_{ab}(x)\,h_{cd}(x)\,d \sigma(x) \\
& = \frac{n+4}{36(n+1)}K_n^{-n} \sum_{p,q=1}^n W_{apbq}\(W_{cpdq} + W_{cqdp} \).
\end{aligned} \]
Similarly, we get
\[ A_3  = - \frac{2}{9n} K_n^{-n} \sum_{p,q=1}^n W_{apbq}\(W_{cpdq} + W_{cqdp} \). \]
Combining the latter equalities with \eqref{derF352} finally shows that 
\be  \int_{\R^n}  \langle \nabla L_{ab}^0, \nabla L_{cd}^0 \rangle \,dx  = \frac{n+4}{36(n+1)} K_n^{-n}  
 \sum_{p,q=1}^n W_{apbq}\(W_{cpdq} + W_{cqdp} \). \ee
Going back to \eqref{derF34}, we have thus proven that  
\begin{multline} \label{derF36}
 \partial_{z_a z_b z_c z_d}^4  F_3(t,z)_{|z=0}  = - \frac{n+4}{36(n+1)} K_n^{-n} \sum_{p,q=1}^n  (W_{apbq}(W_{cpdq} + W_{cqdp} ) \\
+ W_{apcq}(W_{bpdq} + W_{bqdp} )  + W_{apdq}(W_{bpcq} + W_{bqcp} )).
\end{multline}
Using the symmetries of $W$, it is now easily seen that 
\[ \sum_{a,b,c,d,p,q=1}^n W_{apbq}\( W_{cpdq} + W_{cqdp}\) z_az_bz_cz_d = 18 \sum_{p,q=1}^n \( h_{pq}(z)\)^2. \]
A Taylor formula for $F_3$ at fourth order in $(t,0)$ thus shows with \eqref{derF36} that 
\[ \begin{aligned}
F_3(t,z) & = \frac{1}{24} \sum_{a,b,c,d=1}^n \partial_{z_a z_b z_c z_d}^4  F_3(t,z)_{|z=0} z_a z_b z_c z_d + \bigO\(|z|^5\) \\
& =-  \frac{n+4}{48(n+1)} K_n^{-n} \sum_{p,q=1}^n \(h_{pq}(z) \)^2  + \bigO\(|z|^5\), 
\end{aligned} \]
which concludes the proof of Proposition \ref{propF3}.
\end{proof}

Combining Propositions \ref{propder2}, \ref{propF1} and \ref{propF3} and since $F$ is even in $z$, we have therefore proven that, for any $t >0$ fixed,  
\ben \label{Forder4}
 F(t,z) = F(t,0) -  \frac{n^2-8n-12}{48n(n+1)} K_n^{-n} \sum_{p,q=1}^n \( h_{pq}(z) \)^2  + \bigO\(|z|^5\) 
 \een
as $z \to 0$. It is easily checked that $n^2 - 8n - 12 >0$ for $n \ge 11$. We are now in position to conclude the proof of Theorem \ref{Th1}. 

\begin{proof}[Proof of Theorem \ref{Th1}.]
Let $(M,\hat{g})$ be a locally conformally flat manifold that is $Y$-non-degenerate in the sense of Definition \ref{nondege}. Let $W: (\R^n)^4 \to \R$ be a four-linear form as in Section \ref{secreduced} and $h$ be defined by \eqref{defh}. By the symmetries of $W$, $h$ satisfies \eqref{hyph1}. We assume that $W$ is chosen so that 
\ben \label{condih}
 \sum_{p,q=1}^n \( h_{pq}(z) \)^2  \ge C |z|^4 \quad \text{ for all } z \in \R^n   
\een
for some constant $C >0$ independent of $z$. Assumption \eqref{condih} is for example satisfied when $W$ has the following diagonal form:
\[ W_{ijk\ell} = \frac{A_{ij}}{2} \(\delta_{ik} \delta_{j \ell} - \delta_{jk} \delta_{i \ell} \), \]
where $A$ is a nonzero symmetric matrix satisfying $A_{ii} = 0$ and $\sum_{j=1}^n A_{ij} = 0$ for any $1 \le i \le n$ and $A_{ij} \neq 0$ whenever $i \neq j$. Let $(\eps_k)_k, (\mu_k)_k$ and $(r_k)_k$ be sequences as in Section \ref{secLS}, $\tilde g$ and $g$ be defined as in \eqref{deftg} and $\check g$ and $\check{u}_0$ be defined as in \eqref{deftu0}. The analysis of Section \ref{secLS} applies. For $(t,z) \in [1/A,A] \times B(0,1)$,  we let $u_{k,t,z} := W_{k,t,z} + \vp_k(t,z)$, where $W_{k,t,z}$ is as in \eqref{defW} and $\vp_k(t,z)$ is given by Proposition \ref{LS}. For $k\in\N$, we let $F_k: [1/A,A] \times B(0,1) \to \R$ be defined by 
\[ F_k(t,z) := \mu_k^{1-\frac{n}{2}} \Big( I\(u_{k,t,z} \) - \frac1n K_n^{-n} - I(\check{u}_0) \Big).\]
By Propositions \ref{DLenergie2} and \ref{DLenergie}, we have 
\ben \label{convFk}
 F_k(t,z) = F(t,z) + \Lambda_k^0(t,z) \quad \text{ and } \quad \partial_t F_k(t,z) = \partial_t F(t,z) + \Lambda_k^1(t,z) 
 \een
uniformly with respect to $(t,z) \in  [1/A,A] \times B(0,1)$, where $\Lambda_k^0$ and $\Lambda_k^1$ converge uniformly to $0$ in compact subsets of $[1/A,A] \times B(0,1)$ and $F$ is as in \eqref{defF}. 
By Lemma \ref{lemmez0}, \eqref{Forder4} and \eqref{condih}, $F$ has a saddle-type geometry on $[1/A,A] \times B(0,1)$, we can let $\eps$ and $\eta$ be small enough  so that
\[ \min_{t \in [t_0-\eta, t_0+\eta]} F(t,0) = F(t_0,0) >  \max_{t \in [t_0-\eta, t_0 + \eta], |z| = \eps}  F(t, z)\]
and  $\partial_t F(t_0- \eta,z) <0$ and $\partial_t F(t_0+ \eta,z) >0$ for any $z\in B(0,\eps)$. Since $F(t_0,0) <0$, by decreasing $\eps$ and $\eta$ if necessary, we can also assume that
\[ \max_{(t,z)\in [t_0-\eta, t_0+\eta] \times \overline{B(0,\eps)}} F(t,z) <0. \]
By \eqref{convFk}, choosing $k$ large enough, we again have 
\[ \min_{t \in [t_0-\eta, t_0+\eta]} F_k(t,0) >  \max_{t \in [t_0-\eta, t_0 + \eta], |z| = \eps}  F_k(t, z), \]
and  $\partial_t F_k(t_0- \eta,z) <0$ and $\partial_t F_k(t_0+ \eta,z) >0$ for any $z\in B(0,\eps)$. Thus $F_k$ also has a saddle-type geometry for $k$ large enough. Lemma A.1 of Thizy--V\'etois \cite{ThizyVetois} then applies and shows that, for $k$ large enough, $F_k$ admits a critical point $(t_k,z_k)$ in $(t_0-\eta,t_0+\eta) \times B(0,\eps)$. By Proposition \ref{LS}, $u_{k,t_k,z_k}$ is then a solution of \eqref{ynodal}. Standard arguments show that $u_k$ blows-up as $k \to  \infty$. Finally, by Propositions  \ref{DLenergie2} and \ref{DLenergie} and since $u_{k,t_k,z_k}$ solves \eqref{ynodal}, we have
\[ \bal \int_M |u_{k,t_k,z_k}|^{2^*} dv_{\tilde g}  = n I\(u_{k,t_k,z_k}\) & = K_n^{-n} + \int_M \check{u}_0^{2^*} dv_{\check g} + n \mu_k^{\pui} \( F(t_k,z_k) + \smallo\(1\) \) \\
 & < K_n^{-n} + \int_M \check{u}_0^{2^*} dv_{\check g}  
\eal \]
since $F(t_k,z_k) <0$. Since $\int_M \check{u}_0^{2^*} dv_{\check{g}} = Y(M, [g])^{\frac{n}{2}}$ by the $Y$-non-degeneracy assumption, this concludes the proof of Theorem \ref{Th1}. 
\end{proof}

\begin{remark}
If $\tilde g$ is defined as in \eqref{deftg}, it follows from an expansion at first-order in $x-y_k$ that $\tilde g$ satisfies  
\[ \Weyl_{\tilde g}(y_k) =- \eps_k W \]
for any $k\in\N$. In view of \eqref{DLWeyl}, Proposition \ref{DLenergie2} then shows that 
\begin{multline*}
I\(W_{k,t,0} \) = \frac{1}{n} K_n^{-n} + I(\check{u}_0) +  \Lambda(n) \check{u}_0(x_0) \mu_k(t)^{\pui} - C(n) |\Weyl_{\tilde g}(y_k)|_{\tilde g}^2 \mu_k(t)^4\\
 + \smallo\(\mu_k^{\pui}\),
\end{multline*}
an expansion that is reminiscent of those in Esposito--Pistoia--V\'etois \cite{EspositoPistoiaVetois} and Premoselli--Wei \cite{PremoselliWei}. 
\end{remark}

\section{Asymptotic analysis at the lowest sign-changing energy level and proof of Theorem \ref{Th2}} \label{Sec:linearpert}

We begin this section with a simple result showing that $Y\(\mathbb{S}^n,[g_0]\)^{\frac{n}{2}}+Y\(M,[g]\)^{\frac{n}{2}}$ is the minimal energy level for the blow-up of sign-changing solutions of \eqref{IntroEq1}:

\begin{proposition}\label{Pr1}
Let $\(M,g\)$ be a smooth, closed Riemannian manifold of dimension $n\ge3$ and positive Yamabe type. Let $\(u_k\)_{k\in\N}$ be a blowing-up sequence of solutions to \eqref{IntroEq1}. If $u_k$ changes sign for large $k$ and blows-up as $k\to\infty$, then 
\begin{equation}\label{Pr1Eq1}
E\(u_k\)\ge Y\(M,[g]\)^{\frac{n}{2}}+Y\(\mathbb{S}^n,[g_{std}]\)^{\frac{n}{2}} + \smallo\(1\)
\end{equation}
as $k\to\infty$. Moreover, if equality holds true in \eqref{Pr1Eq1} and $\(M,g\)$ is not conformally diffeomorphic to $\(\S^n,g_{std}\)$, then up to a subsequence and replacing $u_k$ by $-u_k$ if necessary, $u_k$ is of the form
\begin{equation}\label{IntroEq3}
u_k=u_0-\(\frac{\mu_k}{\mu_k^2+\frac{d_g\(\cdot,\xi_k\)^2}{n(n-2)}}\)^{\frac{n-2}{2}}+\smallo\(1\)\text{ in }H^1\(M\)\text{ as }k\to\infty,
\end{equation}
where $u_0$ is a positive solution to the Yamabe equation \eqref{IntroEq4} that attains $Y\(M,[g]\)$, $d_g$ is the geodesic distance with respect to the metric $g$ and $\(\mu_k\)_k$ and $\(\xi_k\)_k$ are two families of positive numbers and points in $M$, respectively, such that $\mu_k\to0$ and $\xi_k\to\xi_0$ as $k\to\infty$ for some point $\xi_0\in M$.
\end{proposition}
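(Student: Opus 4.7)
The plan is to apply Struwe's $H^1$-decomposition theorem. After extracting a subsequence I may assume $E(u_k) \to E_\infty \in (0,+\infty]$; if $E_\infty = +\infty$ the bound is trivial, so assume $E_\infty < +\infty$. Testing \eqref{IntroEq1} against $u_k$ gives $\int_M (|\nabla u_k|_g^2 + c_n \Scal_g u_k^2)\,dv_g = \int_M |u_k|^{2^*} dv_g$, so by positive Yamabe coercivity $(u_k)$ is bounded in $H^1(M)$ and, being a sequence of solutions, is trivially Palais--Smale for the associated energy functional. Struwe's theorem then furnishes, up to a further subsequence, a decomposition
\[ u_k = u_\infty + \sum_{i=1}^N \sigma_i B_i^k + r_k, \qquad r_k \to 0 \text{ in } H^1(M), \]
where $u_\infty \in H^1(M)$ is a (possibly zero) solution of \eqref{IntroEq4}, $\sigma_i \in \{\pm 1\}$, and each $B_i^k$ is a positive standard bubble concentrating at some $\xi_i^k \in M$ at scale $\mu_i^k \to 0$. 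The energies decouple as $E(u_k) = E(u_\infty) + N \cdot Y(\S^n,[g_{std}])^{n/2} + o(1)$, and the blow-up hypothesis forces $N \ge 1$.

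The sign-changing hypothesis enters through two elementary bounds. First, $u_k^{\pm} \in H^1(M)\setminus\{0\}$ and testing \eqref{IntroEq1} against $u_k^{\pm}$ yields $\int_M (|\nabla u_k^{\pm}|_g^2 + c_n \Scal_g (u_k^{\pm})^2)\,dv_g = \int_M (u_k^{\pm})^{2^*} dv_g$; the variational characterization of $Y(M,[g])$ then gives $\int_M (u_k^{\pm})^{2^*} dv_g \ge Y(M,[g])^{n/2}$. The same argument applied to $u_\infty$ shows $E(u_\infty) \ge Y(M,[g])^{n/2}$ whenever $u_\infty \ne 0$. If $u_\infty \ne 0$, combining with $N \ge 1$ immediately yields \eqref{Pr1Eq1}. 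If $u_\infty = 0$, I claim $N \ge 2$: suppose $N=1$ and, after replacing $u_k$ by $-u_k$ if needed, assume $\sigma_1 = +1$. Since $B_1^k > 0$, on the set $\{u_k < 0\}$ we have $|r_k| = |u_k - B_1^k| = B_1^k + u_k^- \ge u_k^-$, whence $\int_M |r_k|^{2^*} dv_g \ge \int_M (u_k^-)^{2^*} dv_g \ge Y(M,[g])^{n/2} > 0$, contradicting $r_k \to 0$ in $L^{2^*}(M)$. Hence $N \ge 2$, and since $Y(M,[g]) \le Y(\S^n,[g_{std}])$ we again obtain \eqref{Pr1Eq1}.

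For the rigidity statement, the assumption $(M,g) \not\approx (\S^n,g_{std})$ yields the strict inequality $Y(M,[g]) < Y(\S^n,[g_{std}])$. Together with the energy decomposition this rules out the $u_\infty = 0$ branch at the threshold (the identity $(N-1) Y(\S^n,[g_{std}])^{n/2} = Y(M,[g])^{n/2}$ is incompatible with $N$ being an integer and the strict inequality above), and therefore forces $u_\infty \ne 0$, $N=1$, and $E(u_\infty) = Y(M,[g])^{n/2}$. Thus $u_\infty$ is a Yamabe minimizer; it must be of constant sign, since a sign-changing minimizer would satisfy $E(u_\infty) \ge 2 Y(M,[g])^{n/2}$ by the testing argument above. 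After replacing $u_k$ by $-u_k$ we may take $u_\infty > 0$, and then $\sigma_1 = -1$ is forced (otherwise $u_k$ would be asymptotically non-negative). Writing the single bubble $B_1^k$ in geodesic normal coordinates at its concentration point $\xi_k \to \xi_0$ gives exactly the representation \eqref{IntroEq3}. The main delicate step in the whole argument is the rigidity $N \ge 2$ in the case $u_\infty = 0$; the remaining inputs are standard bookkeeping of Struwe's decomposition combined with the positive Yamabe coercivity of the conformal Laplacian.
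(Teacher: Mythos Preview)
Your argument is essentially correct and follows the same strategy as the paper: apply Struwe's $H^1$-decomposition and use the sign-changing hypothesis by testing \eqref{IntroEq1} against $u_k^{\pm}$ to force extra energy. The paper's proof is more compressed---it argues by contradiction that the configuration ``one positive bubble, zero weak limit'' is impossible because $\int_M(u_k^+)^{2^*}dv_g$ stays bounded below---but your explicit case split and the pointwise inequality $|r_k|\ge u_k^-$ on $\{u_k<0\}$ amount to the same thing.

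There is one genuine omission in your invocation of Struwe's theorem: for sign-changing Palais--Smale sequences, the bubble profiles are a priori arbitrary $D^{1,2}(\R^n)$-solutions of $-\Delta V=|V|^{2^*-2}V$, not necessarily $\pm$(standard positive bubble). The paper handles this by noting (as it does just before applying Struwe) that any sign-changing solution on $(\S^n,g_{std})$ has energy at least $2Y(\S^n,[g_{std}])^{n/2}$. You should insert this remark: if some profile is sign-changing, its energy alone already exceeds $Y(M,[g])^{n/2}+Y(\S^n,[g_{std}])^{n/2}$, so \eqref{Pr1Eq1} holds trivially and such profiles are excluded at the threshold. With that one line added, your proof is complete and matches the paper's.
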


\proof[Proof of Proposition \ref{Pr1}]
We let
$$E:=\liminf_{k\to\infty}E\(u_k\)\quad\text{and}\quad E_0:=Y\(M,[g]\)^{\frac{n}{2}}+Y\(\mathbb{S}^n,[g_{std}]\)^{\frac{n}{2}}.$$ 
On the one hand, $Y\(M,[g]\)<Y\(\mathbb{S}^n,[g_{std}]\)$ when $\(M,g\)$ is not conformally diffeomorphic to $\(\S^n,g_{std}\)$ by the results of Trudinger \cite{Trudinger}, Aubin \cite{Aubin2} and Schoen \cite{SchoenYamabe}. On the other hand, it is easy to see that the energy of any sign-changing solutions to the Yamabe equation on $\(\mathbb{S}^n,g_{std}\)$ is greater than $2Y\(\mathbb{S}^n,[g_{std}]\)^{\frac{n}{2}}$. Struwe's decomposition \cite{Struwe} then shows that up to a subsequence and replacing $u_k$ by $-u_k$ if necessary,
\begin{equation*}
\left\{\begin{aligned}&\text{\eqref{IntroEq3} holds true with $u_0=0$ in $M$ if }E<E_0,\\&\text{\eqref{IntroEq3} holds true for some energy-minimizing solution $u_0$ of \eqref{IntroEq4} if }E=E_0\text{ and}\\&\text{$\(M,g\)$ is not conformally diffeomorphic to $\(\mathbb{S}^n,g_0\)$.}
\end{aligned}\right.
\end{equation*}
We are left to show that \eqref{IntroEq3} cannot hold true with $u_0=0$ in $M$. Assume by contradiction that this is the case. For large $k$, by testing \eqref{IntroEq1} with $u_k^+:=\max\(u_k,0\)$, we obtain 
\begin{equation}\label{Pr1Eq2}
\int_M\big(\left|\nabla u_k^+\right|^2+c_n\Scal_g\(u_k^+\)^2\big)dv_g=\int_M\(u_k^+\)^{2^*}dv_g.
\end{equation}
Since $u_k$ changes sign the right-hand side in \eqref{Pr1Eq2} is nonzero. An easy density argument then gives 
\begin{align}
\frac{\int_M\big(\left|\nabla u_k^+\right|^2+c_n\Scal_g\(u_k^+\)^2\big)dv_g}{\(\int_M\(u_k^+\)^{2^*}dv_g\)^{2/2^*}}&\ge\inf_{u\in C^\infty\(M\),\,u>0}\frac{\int_M\big(\left|\nabla u\right|^2+c_n\Scal_gu^2\big)dv_g}{\(\int_Mu^{2^*}dv_g\)^{2/2^*}}\nonumber\\
&=c_nY\(M,[g]\),\label{Pr1Eq3}
\end{align}
where $c_n := \frac{n-2}{4(n-1)}$. By using \eqref{Pr1Eq2} and \eqref{Pr1Eq3}, we then obtain
\begin{align}\label{Pr1Eq4}
c_nY\(M,[g]\)&\le\(\int_M\(u_k^+\)^{2^*}dv_g\)^{2/n}.
\end{align}
Since $Y\(M,[g]\)>0$, it then follows from \eqref{Pr1Eq4} that $u_k^+\not\to0$ in $L^{2^*}\(M\)$ as $k\to\infty$. This is in contradiction with \eqref{IntroEq3} when $u_0=0$ in $M$.
\endproof

The rest of this subsection is devoted to the proof of Theorem \ref{Th2}. By Lee and Parker's construction of conformal normal coordinates \cite{LeeParker} there exists a smooth family of metrics $\(g_\xi\)_{\xi\in M}$, $g_\xi=\Lambda_\xi^{2^*-2}g$, such that for every point $\xi\in M$, the function $\Lambda_\xi$ is smooth, positive and satisfies
\begin{equation}\label{Sec3Eq2}
\Lambda_\xi\(\xi\)=1,\quad\nabla\Lambda_\xi\(\xi\)=0\quad\text{and}\quad dv_{g_\xi}\(x\)=\big(1+\bigO\big(\left|x\right|^N\big)\big)dv_{\delta_0}\(x\)
\end{equation}
in geodesic normal coordinates, where $dv_{g_\xi}$ and $dv_{\delta_0}$ are the volume elements of the metric $g_\xi$ and the Euclidean metric $\delta_0$, respectively, and $N\in\N$ can be chosen arbitrarily large. In particular (see \cite{LeeParker}), it follows from \eqref{Sec3Eq2} that
\begin{equation}\label{Sec3Eq3}
\Scal_{g_\xi}(\xi)=0,\quad\nabla \Scal_{g_\xi}(\xi)=0\quad\text{and}\quad\Delta_g \Scal_{g_\xi}(\xi)=\frac{1}{6}|\Weyl_g(\xi)|_g^2.
\end{equation}
Also, if $g$ is locally conformally flat in a neighbourhood of $\xi \in M$, then $g_{\xi}$ can be chosen flat in a neighbourhood of $\xi$. We first state some preliminary lemmas that apply to the more general case where the weak limit $u_0$ is a possibly sign-changing solution of
\begin{equation}\label{Sec4Eq}
\Delta_gu_0+c_n\Scal_gu_0=\left|u_0\right|^{2^*-2}u_0\quad\text{in }M.
\end{equation}

The following lemma is essentially contained in Premoselli \cite{Premoselli13} (see also Druet--Hebey--Robert \cite{DruetHebeyRobert} in the case of positive solutions):

\begin{lemma}\label{Lem1}
Let $\(M,g\)$ be a smooth, closed Riemannian manifold of dimension $n\ge3$ and $u_0$ be a solution to \eqref{Sec4Eq}. Assume that there exists a sequence of solutions $\(u_k\)_{k\in\N}$ of type \eqref{IntroEq3} to \eqref{IntroEq1}. Let $\(g_\xi\)_{\xi\in M}$, $g_\xi=\Lambda_\xi^{2^*-2}g$, be the Lee--Parker family of conformal metrics to $g$. Let $\(\overline\mu_k\)_{k\in\N}$ and $\(\overline\xi_k\)_{k\in\N}$ be families of positive numbers and points in $M$, respectively, such that 
\begin{equation}\label{Lem1Eq1}
u_k\(\overline\xi_k\)=\min_M u_k=- \overline\mu_k^{1 - \frac{n}{2}}.
\end{equation}
Then 
\begin{equation}\label{Lem1Eq2}
u_k-u_0+B_k=\smallo\big(B_k+1\big)
\end{equation}
as $k\to\infty$, uniformly in $M$, where $B_k$ is defined as
$$B_k\(x\):=\Lambda_{\overline\xi_k}\(x\)\frac{\overline\mu_k^{\frac{n-2}{2}}}{\Big( \overline\mu_k^2+\frac{d_{g_{\overline\xi_k}}\(x,\overline\xi_k\)^2}{n(n-2)}\Big)^{\frac{n-2}{2}}}\quad\forall x\in M.$$ 
\end{lemma}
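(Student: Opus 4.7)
The plan is to split the manifold $M$ around $\overline\xi_k$ into an inner, an intermediate and an outer region, and to establish \eqref{Lem1Eq2} on each of them. A necessary preliminary is to tie the minimum-point parameters $(\overline\mu_k, \overline\xi_k)$ given by \eqref{Lem1Eq1} to the bubble parameters $(\mu_k, \xi_k)$ from \eqref{IntroEq3}: using the strong $H^1$-convergence in \eqref{IntroEq3} and comparing $u_k$ with the explicit standard bubble, I would first show that $\overline\mu_k / \mu_k \to 1$ and $d_g(\overline\xi_k, \xi_k) = \bigO(\mu_k)$ as $k \to \infty$, so that $\overline\xi_k \to \xi_0$ and $\overline\mu_k \to 0$. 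This is a direct consequence of the fact that the decomposition \eqref{IntroEq3} contains only one bubble.

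For the inner region $\{d_g(x, \overline\xi_k) \le R \overline\mu_k\}$, I would pass to Lee--Parker conformal normal coordinates based at $\overline\xi_k$ via $g_{\overline\xi_k} = \Lambda_{\overline\xi_k}^{2^*-2}g$ and rescale
\[
\tilde u_k(y) := \overline\mu_k^{\pui}\,\bigl(\Lambda_{\overline\xi_k}^{-1} u_k\bigr)\!\bigl(\exp^{g_{\overline\xi_k}}_{\overline\xi_k}(\overline\mu_k y)\bigr).
\]
Properties \eqref{Sec3Eq2}--\eqref{Sec3Eq3}, together with standard elliptic theory and the $H^1$-bound, give $\tilde u_k \to -V_1$ in $C^2_{loc}(\R^n)$, with $V_1$ as in \eqref{defVi}; unscaling yields \eqref{Lem1Eq2} on this region for every fixed $R > 0$. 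For the outer region $\{d_g(x, \overline\xi_k) \ge \delta\}$ with $\delta > 0$ fixed, the $H^1$-convergence in \eqref{IntroEq3} and standard elliptic regularity applied to \eqref{IntroEq1} give $u_k \to u_0$ in $C^2_{loc}(M \setminus \{\xi_0\})$, while $B_k(x) = \bigO(\overline\mu_k^{\pui})$ uniformly on this region, which proves \eqref{Lem1Eq2}.

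The core step and \emph{main obstacle} is the intermediate annular region $\{R\overline\mu_k \le d_g(x, \overline\xi_k) \le \delta\}$. I would argue by contradiction: suppose there exist $\epsilon_0 > 0$ and points $x_k$ in this annulus with $|u_k(x_k) - u_0(x_k) + B_k(x_k)| \ge \epsilon_0 (B_k(x_k) + 1)$. Setting $r_k := d_g(\overline\xi_k, x_k)$, so that $r_k \to 0$ and $r_k/\overline\mu_k \to \infty$, and defining
\[
v_k(y) := r_k^{\pui}\,\bigl(\Lambda_{\overline\xi_k}^{-1} u_k\bigr)\!\bigl(\exp^{g_{\overline\xi_k}}_{\overline\xi_k}(r_k y)\bigr),
\]
the uniqueness of the bubble in \eqref{IntroEq3} rules out secondary concentration by Struwe-type quantization, so the $v_k$ stay locally bounded on $\R^n \setminus \{0\}$. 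The rescaled Yamabe equation for $v_k$ degenerates to the Laplace equation on $\R^n \setminus \{0\}$, and a removable-singularity argument combined with matching against the inner profile identifies the limit as $(n(n-2))^{\pui}\,|y|^{2-n}$, which is exactly the rescaling of $B_k$ at $x_k$. This contradicts the assumption on $x_k$. The delicate point is this intermediate step: making the quantization rigorous in the sign-changing setting and pinning down the constant in the limiting $|y|^{2-n}$ profile both require the sharp pointwise/iterated estimates developed by Premoselli \cite{Premoselli13}, in the spirit of Druet--Hebey--Robert \cite{DruetHebeyRobert} and Chen--Lin \cite{ChenLin}.
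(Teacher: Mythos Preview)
Your three-region strategy is the standard route by which pointwise estimates of this type are proved, and in the one-bubble case it can indeed be pushed through. However, the paper's proof is considerably shorter and proceeds differently: the key pointwise estimate
\[
u_k=u_0-\frac{\mu_k^{\frac{n-2}{2}}}{\big( \mu_k^2+\frac{d_g(x,\xi_k)^2}{n(n-2)}\big)^{\frac{n-2}{2}}}+\smallo\!\Big(1+\Big(\frac{\mu_k}{\mu_k^2+d_g(\cdot,\xi_k)^2}\Big)^{\frac{n-2}{2}}\Big)
\]
is taken directly from the main result of Premoselli \cite{Premoselli13} as a black box. From this, one reads off $\overline\mu_k\sim\mu_k$ and $d_g(\overline\xi_k,\xi_k)=\smallo(\mu_k)$ (note: little-$o$, not your big-$O$), and then a pointwise comparison of the two bubbles using $\Lambda_{\overline\xi_k}(\overline\xi_k)=1$ and $d_{g_{\overline\xi_k}}(x,\overline\xi_k)^2=d_g(x,\overline\xi_k)^2+\bigO(d_g(x,\overline\xi_k)^3)$ finishes the proof in a few lines.

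In effect, your proposal sketches a re-proof of (a special case of) the cited result rather than invoking it, and you yourself note that the intermediate region ultimately requires the machinery of \cite{Premoselli13}; this makes the detour somewhat circular. Two minor corrections: the inner limit should be $-B_0$ (the standard bubble \eqref{Sec3Eq6}), not $-V_1$, which in the paper's notation is a kernel element from \eqref{defVi}; and to compare the bubble at $(\mu_k,\xi_k)$ with $B_k$ you do need $d_g(\overline\xi_k,\xi_k)=\smallo(\mu_k)$, not merely $\bigO(\mu_k)$, though in your scheme this can be recovered from the minimum condition \eqref{Lem1Eq1} once the inner $C^2_{\loc}$ convergence is established.
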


\proof[Proof of Lemma \ref{Lem1}]
Since $\(u_k\)_{k}$ is of the form \eqref{IntroEq3}, the main result in Premoselli \cite{Premoselli13} shows that 
\begin{equation}\label{Lem1Eq3}
u_k=u_0-\frac{\mu_k^{\frac{n-2}{2}}}{\( \mu_k^2+\frac{d_g\(x,\xi_k\)^2}{n(n-2)}\)^{\frac{n-2}{2}}}+\smallo\(1+\(\frac{\mu_k}{\mu_k^2+d_g\(\cdot,\xi_k\)^2}\)^{\frac{n-2}{2}}\)
\end{equation}
as $k\to\infty$, uniformly in $M$, where $\(\mu_k\)_k$ and $\(\xi_k\)_k$ are as in \eqref{IntroEq3}. Let $\(\overline\mu_k\)_{k}$ and $\(\overline\xi_k\)_{k}$ be such that \eqref{Lem1Eq1} holds true. It then follows from \eqref{Lem1Eq3} that 
\begin{equation}\label{Lem1Eq4}
\overline\mu_k\sim\mu_k\quad\text{ and }\quad d_g\(\overline\xi_k,\xi_k\)=\smallo\(\mu_k\)
\end{equation}
as $k\to\infty$. Moreover, since $\Lambda_{\overline\xi_k}\(\overline\xi_k\)=1$, we obtain 
\begin{equation}\label{Lem1Eq5}
d_{g_{\overline\xi_k}}\(x,\overline\xi_k\)^2=d_g\(x,\overline\xi_k\)^2+\bigO\(d_g\(x,\overline\xi_k\)^3\)
\end{equation}
uniformly with respect to $k\in\N$ and $x\in M$. By using again that $\Lambda_{\overline\xi_k}\(\overline\xi_k\)=1$ together with \eqref{Lem1Eq4} and \eqref{Lem1Eq5}, we obtain
\begin{equation}\label{Lem1Eq6}
\frac{\mu_k^{\frac{n-2}{2}}}{\( \mu_k^2+\frac{d_{g}\(x,\xi_k\)^2}{n(n-2)}\)^{\frac{n-2}{2}}}=B_k+\smallo\big(B_k+1\big)
\end{equation}
as $k\to\infty$, uniformly in $M$, so that \eqref{Lem1Eq2} follows from \eqref{Lem1Eq3} and \eqref{Lem1Eq6}.
\endproof

We then state the following result from Premoselli--V\'etois \cite{PremoselliVetois2}, which essentially follows from a Pohozaev-type identity together with the estimate \eqref{Lem1Eq3}:

\begin{lemma}\label{Lem2}(Lemma 3.2 in \cite{PremoselliVetois2})
Let $\(M,g\)$ be a smooth, closed Riemannian manifold of dimension $n\ge3$ and $u_0$ be a solution to \eqref{Sec4Eq}. Assume that there exists a sequence of solutions $\(u_k\)_{k\in\N}$ to \eqref{IntroEq1} satisfying \eqref{IntroEq3}. Let $\(\overline\mu_k\)_k$, $\(\overline\xi_k\)_k$, $\(B_k\)_k$ and $\(g_\xi\)_\xi$ be as in Lemma \ref{Lem1}. Then
\begin{multline}\label{Lem2Eq1}
\int_{B\(0,1/\sqrt{\overline\mu_k}\)}\(\<\nabla\hat{u}_k,\cdot\>_{\delta_0}+\frac{n-2}{2}\hat{u}_k\)\(\(\Delta_{\hat{g}_k}-\Delta_{\delta_0}\)\hat{u}_k+\overline\mu_k^2\hat{h}_k\hat{u}_k\)dv_{\hat{g}_k}\\
=\frac{1}{2}n^{\pui} (n-2)^{\frac{n+2}{2}} \omega_{n-1}u_0\(\xi_0\)\overline\mu_k^{\frac{n-2}{2}}+\smallo\(\overline\mu_k^{\frac{n-2}{2}}\)
\end{multline}
as $k\to\infty$, where 
\begin{equation}\label{Lem2Eq2}
\hat{u}_k\(y\):=\overline\mu_k^{\frac{n-2}{2}}\big(\Lambda_{\overline\xi_k}^{-1}u_k\big)\big(\exp_{\overline\xi_k}\(\overline\mu_k y\)\big),\quad\hat{g}_k\(y\):=\exp_{\overline\xi_{k}}^*g_{\overline\xi_{k}}\(\overline \mu_k y\)
\end{equation}
and
\begin{equation}\label{Lem2Eq3}
\hat{h}_k\(y\):=c_n\Scal_{\hat{g}_k}\big(\exp_{\overline\xi_k}\(\overline\mu_k y\)\big)
\end{equation}
for all points $y\in B\(0,1/\sqrt{\overline\mu_k}\)$, $\omega_{n-1}$ is the volume of the standard unit sphere of dimension $n-1$, $\delta_0$ is the Euclidean metric in $\R^n$, $\exp_{\overline\xi_k}$ is the exponential map at the point $\overline\xi_k$ with respect to the metric $g_{\overline\xi_k}$ and we identify $T_{\overline\xi_k}M$ with $\R^n$.
\end{lemma}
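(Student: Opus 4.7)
The plan is to derive this Pohozaev-type identity in the standard way: multiply the rescaled equation satisfied by $\hat{u}_k$ by the critical conformal multiplier $X\hat{u}_k := \langle \nabla \hat{u}_k, \cdot\rangle_{\delta_0} + \frac{n-2}{2}\hat{u}_k$, integrate over $B(0, 1/\sqrt{\overline{\mu}_k})$, and then extract the leading-order contribution of the resulting boundary term using the pointwise description of $u_k$ provided by Lemma \ref{Lem1}.

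First, I would verify, using the conformal covariance of the conformal Laplacian under $g_{\overline{\xi}_k} = \Lambda_{\overline{\xi}_k}^{2^*-2} g$ together with the scaling properties of \eqref{IntroEq1}, that $\hat{u}_k$ defined by \eqref{Lem2Eq2} satisfies
\begin{equation*}
\Delta_{\hat{g}_k}\hat{u}_k + \overline{\mu}_k^2\, \hat{h}_k\, \hat{u}_k = |\hat{u}_k|^{2^*-2}\hat{u}_k\quad\text{on } B(0,1/\sqrt{\overline{\mu}_k}),
\end{equation*}
which I would rewrite as $\Delta_{\delta_0}\hat{u}_k - |\hat{u}_k|^{2^*-2}\hat{u}_k = -[(\Delta_{\hat{g}_k}-\Delta_{\delta_0})\hat{u}_k + \overline{\mu}_k^2 \hat{h}_k \hat{u}_k]$. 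Multiplying by $X\hat{u}_k$ and integrating, the classical Euclidean Pohozaev identity (the fact that the integrand $(Xv)(\Delta_{\delta_0}v-|v|^{2^*-2}v)$ is a pure divergence) yields
\begin{equation*}
\int_{B(0,1/\sqrt{\overline{\mu}_k})} (X\hat{u}_k)\big[(\Delta_{\hat{g}_k}-\Delta_{\delta_0})\hat{u}_k + \overline{\mu}_k^2 \hat{h}_k\hat{u}_k\big]\,dy = \int_{\partial B(0,1/\sqrt{\overline{\mu}_k})} \mathcal{P}(\hat{u}_k)\, d\sigma,
\end{equation*}
where $\mathcal{P}(\hat{u}_k) := (X\hat{u}_k)\partial_\nu \hat{u}_k - \frac{1}{2}|\nabla\hat{u}_k|^2\langle y,\nu\rangle + \frac{1}{2^*}|\hat{u}_k|^{2^*}\langle y,\nu\rangle$ is the standard Pohozaev boundary flux. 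The replacement of $dy$ by $dv_{\hat{g}_k}$ on the left creates only an $O\big((\overline{\mu}_k|y|)^N\big)$ error by \eqref{Sec3Eq2}, which is $o(\overline{\mu}_k^{(n-2)/2})$.

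The crux of the proof is the computation of the right-hand boundary flux at radius $R_k := 1/\sqrt{\overline{\mu}_k}$. Using Lemma \ref{Lem1} together with $\Lambda_{\overline{\xi}_k}(\overline{\xi}_k)=1$, $\nabla\Lambda_{\overline{\xi}_k}(\overline{\xi}_k)=0$ and $\overline{\xi}_k\to \xi_0$, I would expand, uniformly on $\partial B(0,R_k)$,
\begin{equation*}
\hat{u}_k(y) = -U(y) + \overline{\mu}_k^{(n-2)/2}u_0(\xi_0) + o\big(\overline{\mu}_k^{(n-2)/2}\big),\qquad U(y) := \big(1+|y|^2/(n(n-2))\big)^{-(n-2)/2},
\end{equation*}
and similarly for $\nabla \hat{u}_k$. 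Since $U$ solves the Euclidean critical equation, applying the same Pohozaev identity to $-U$ alone shows that $\int_{\partial B_R}\mathcal{P}(-U)d\sigma=0$ for every $R>0$, so the pure-bubble contribution vanishes identically. The $|\hat{u}_k|^{2^*}$ boundary piece is of size $O(R_k^n\overline{\mu}_k^n)=O(\overline{\mu}_k^{n/2})$ and is therefore negligible. The only surviving contribution is the cross-term between the tail $-U(y)\sim -(n(n-2))^{(n-2)/2}|y|^{-(n-2)}$ of the bubble and the ``constant piece'' $\overline{\mu}_k^{(n-2)/2}u_0(\xi_0)$. A direct computation based on $\langle \nabla U, y\rangle = -(n-2)U + o(U)$ and the fact that $U$ and its radial derivative are constant on $\partial B_{R_k}$ evaluates this cross-term to
\begin{equation*}
\frac{(n-2)^2}{2}\big(n(n-2)\big)^{(n-2)/2}\omega_{n-1}\,u_0(\xi_0)\,\overline{\mu}_k^{(n-2)/2} = \frac{1}{2}n^{(n-2)/2}(n-2)^{(n+2)/2}\omega_{n-1}\,u_0(\xi_0)\,\overline{\mu}_k^{(n-2)/2},
\end{equation*}
which is precisely the right-hand side of \eqref{Lem2Eq1}.

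The main technical obstacle is ensuring that the various remainders contribute only $o(\overline{\mu}_k^{(n-2)/2})$ to the boundary integral: the $o(B_k+1)$ pointwise error in \eqref{Lem1Eq2}, the Taylor remainders of $u_0$ and of $\Lambda_{\overline{\xi}_k}$ near $\overline{\xi}_k$, and the $C^1$-error in the metric expansion \eqref{Sec3Eq2}. This should follow from upgrading \eqref{Lem1Eq2} to a weighted $C^1$-estimate (via elliptic regularity on thin annuli near $\partial B_{R_k}$, noting that $u_k$ is smooth there) combined with the observation that $R_k = 1/\sqrt{\overline{\mu}_k}$ is exactly the scale where the bubble tail and the regular part $u_0(\xi_0)$ both contribute at size $\overline{\mu}_k^{(n-2)/2}$, so that the intermediate powers of $\overline{\mu}_k$ produced by each error term are genuinely subleading.
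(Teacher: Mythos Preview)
Your proposal is correct and follows exactly the approach the paper indicates: the paper does not prove this lemma but cites it from \cite{PremoselliVetois2}, noting only that it ``essentially follows from a Pohozaev-type identity together with the estimate \eqref{Lem1Eq3}'', which is precisely what you carry out. Your computation of the boundary cross-term between the bubble tail and the constant piece $\overline\mu_k^{(n-2)/2}u_0(\xi_0)$ correctly recovers the constant $\frac{1}{2}n^{(n-2)/2}(n-2)^{(n+2)/2}\omega_{n-1}$, and your handling of the remainders (upgrading \eqref{Lem1Eq2} to $C^1$ on thin annuli, absorbing the volume-form error via \eqref{Sec3Eq2}) is the standard and correct way to close the argument.
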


Finally, to prove Theorem \ref{Th2} in dimensions $n\ge7$, we need the following refinement of Lemma \ref{Lem1}:

\begin{lemma}\label{Lem3}
Let $\(M,g\)$ be a smooth, closed Riemannian manifold of dimension $n\ge7$ and $u_0$ be a solution to \eqref{Sec4Eq}. Let $\(u_k\)_k$, $\(\overline\mu_k\)_k$, $\(\overline\xi_k\)_k$, $\(B_k\)_k$ and $\(g_\xi\)_\xi$ be as in Lemma \ref{Lem1}. Then, for $i\in\left\{0,1,2\right\}$,
\begin{align}\label{Lem3Eq1}
&\left|\nabla^i\(u_k-u_0+B_k\)\(x\)\right|\nonumber\\
&=\bigO\(\left\{\begin{aligned}&\frac{1}{\(\overline\mu_k+d_{g_{\overline\xi_k}}\(x,\overline\xi_k\)\)^{i}}&&\text{if }\(M,g\)\text{ is l.c.f.}\\&\frac{1}{\(\overline\mu_k+d_{g_{\overline\xi_k}}\(x,\overline\xi_k\)\)^{i}}+\frac{\overline\mu_k^{\frac{n-2}{2}}}{\(\overline\mu_k+d_{g_{\overline\xi_k}}\(x,\overline\xi_k\)\)^{n-6+i}}&&\text{otherwise}\end{aligned}\right\}\)
\end{align}
uniformly with respect to $k\in\N$ and $x\in M$, where l.c.f. stands for locally conformally flat.
\end{lemma}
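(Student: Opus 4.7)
The proof proceeds by a pointwise iteration in the spirit of Chen--Lin, carried out in conformal normal coordinates around $\overline\xi_k$. Using the Lee--Parker metric $g_{\overline\xi_k} = \Lambda_{\overline\xi_k}^{2^*-2} g$, write $w_k := u_k - u_0 + B_k$. A direct computation using the Yamabe equations for $u_k$ and $u_0$ (and the conformal invariance of the conformal Laplacian) shows that $w_k$ satisfies a linear equation of the form
\begin{equation*}
\Delta_g w_k + c_n \Scal_g w_k - f'(-B_k) w_k = E_k,
\end{equation*}
where $f(s) := |s|^{2^*-2}s$ and the source $E_k$ splits into three pieces: a \emph{nonlinear remainder} controlled pointwise by $B_k^{2^*-3}(|w_k| + u_0)^2 + |w_k|^{2^*-1}$; a \emph{cross term} of order $B_k^{2^*-2} u_0$, coming from the Taylor expansion of $f(u_0 - B_k + w_k)$ near $-B_k$; and a \emph{metric error} $\Delta_g B_k + c_n \Scal_g B_k + B_k^{2^*-1}$, measuring the failure of $B_k$ to be an exact solution of the Yamabe equation.

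I would then rescale via $\hat w_k(y) := \overline\mu_k^{(n-2)/2} \Lambda_{\overline\xi_k}^{-1}(\exp_{\overline\xi_k}(\overline\mu_k y))\, w_k(\exp_{\overline\xi_k}(\overline\mu_k y))$, working in the blown-up metric $\hat g_k$ of Lemma \ref{Lem2}. On $B(0, \delta/\overline\mu_k)$, $\hat w_k$ satisfies the rescaled equation with linear operator $\Delta_{\hat g_k} - f'(-\hat B_k) + \overline\mu_k^2 c_n \Scal_{\hat g_k}$, which is a perturbation of the Euclidean operator $L_\infty := \Delta_{\delta_0} - f'(-B_{1,0})$ defined using the standard bubble from \eqref{Btz}. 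By Bianchi--Egnell \cite{BianchiEgnell}, $L_\infty$ is Fredholm on $D^{1,2}(\R^n)$, with kernel $\operatorname{span}\{V_0, \ldots, V_n\}$ (see \eqref{defVi}) and a Green's kernel $G(y,z) = \bigO(|y-z|^{2-n})$ on the $L^2$-orthogonal complement. The iteration is then: start from the crude bound $\hat w_k = \smallo(1+\hat B_k)$ given by Lemma \ref{Lem1}; estimate the rescaled source $\hat E_k$ pointwise; convolve against $G$ after projecting out the kernel (as in the Lyapunov--Schmidt construction of Section \ref{secLS}); repeat. After finitely many iterations the bound stabilizes at the rate dictated by the slowest-decaying non-kernel part of $\hat E_k$. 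Pulling back to $M$ and applying interior Schauder estimates on balls of radius comparable to $\overline\mu_k + d_{g_{\overline\xi_k}}(x,\overline\xi_k)$ then yields the bounds on $|\nabla^i w_k|$.

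The dichotomy between the LCF and general cases arises entirely from the metric error. In the LCF case, $g_{\overline\xi_k}$ can be chosen flat on a fixed neighbourhood of $\overline\xi_k$, so $B_k$ is there an \emph{exact} solution of the conformal Yamabe equation; the metric error vanishes identically near $\overline\xi_k$, only the cross term and the nonlinear remainder drive $\hat w_k$, and the iteration produces $|\nabla^i w_k| = \bigO((\overline\mu_k + d)^{-i})$ after unrescaling. In the general case, the Lee--Parker expansion gives $\hat g_k - \delta_0 = \bigO(\overline\mu_k^2 |y|^2)$ and, via \eqref{Sec3Eq3}, $\overline\mu_k^2 \Scal_{\hat g_k} = \bigO(\overline\mu_k^4 |y|^2 |\Weyl_g(\overline\xi_k)|^2)$ near the origin, producing a rescaled metric error of order $\overline\mu_k^{(n+2)/2} |y|^2 (1+|y|)^{-n-2}$. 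The $G$-convolution of such a source decays like $(1+|y|)^{6-n}$, and unrescaling yields precisely the extra term $\overline\mu_k^{(n-2)/2}(\overline\mu_k + d)^{-(n-6+i)}$ in \eqref{Lem3Eq1}. The main obstacle will be the careful bookkeeping of the iteration, in particular choosing pointwise weights compatible with both $L_\infty$ and the decay of $\hat E_k$ at infinity, and ensuring that the kernel projection does not introduce logarithmic losses; a secondary difficulty is matching the rescaled estimates on $B(0,\delta/\overline\mu_k)$ with global estimates on $M$ coming from Lemma \ref{Lem1}.
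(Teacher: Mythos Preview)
Your overall strategy --- Chen--Lin iteration in rescaled conformal normal coordinates, driven by the metric error and a cross term --- is the same as the paper's. But there is a genuine gap in how you handle the kernel of $L_\infty$.

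You write ``convolve against $G$ after projecting out the kernel'', treating the kernel projection as routine Lyapunov--Schmidt bookkeeping. It is not: projecting onto $K_{1,0}^\perp$ leaves an uncontrolled component $\sum_{j=0}^n c_j^k V_j$ of $\hat w_k$, and nothing in your argument bounds the coefficients $c_j^k$. The paper does \emph{not} project. Instead it exploits the normalization built into Lemma~\ref{Lem1}: because $\(\overline\xi_k,\overline\mu_k\)$ are defined via \eqref{Lem1Eq1} (the global minimum of $u_k$ with value $-\overline\mu_k^{1-n/2}$), one has $\(\hat u_k + B_0\)(0)=0$ and $\nabla\(\hat u_k + B_0\)(0)=0$. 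Setting $\lambda_k:=\max|\hat u_k + B_0|$ and $\psi_k:=\lambda_k^{-1}\(\hat u_k+B_0\)$, the paper argues by contradiction: if $\lambda_k$ is not $\bigO\big(\overline\mu_k^{\frac{n-2}{2}}\{+\overline\mu_k^4\}\big)$, then $\psi_k\to\psi_0$ solving \eqref{Lem3Eq10} with $|\psi_0|\le1$; the Chen--Lin classification forces $\psi_0\in\operatorname{span}\{V_0,\dotsc,V_n\}$, and the $n+1$ point conditions $\psi_0(0)=|\nabla\psi_0(0)|=0$ then give $\psi_0\equiv0$, contradicting $|\psi_k(y_k)|=1$. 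This is precisely the step that replaces your projection, and it is the only place where the specific choice of $\(\overline\xi_k,\overline\mu_k\)$ in Lemma~\ref{Lem1} is used. You should invoke it.

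Two smaller points. First, the paper uses the Dirichlet Green's function of $\Delta_{\hat g_k}$ on $B\(0,1/\sqrt{\overline\mu_k}\)$, not a Green's kernel for $L_\infty$ on $\R^n$; this avoids any issue with the kernel of $L_\infty$ in the representation formula (the linearized potential $(2^*-1)B_0^{2^*-2}\psi_k$ is simply moved to the right-hand side). Second, your stated order for the rescaled metric error, $\overline\mu_k^{(n+2)/2}|y|^2(1+|y|)^{-n-2}$, is off: since $B_0$ is radial and \eqref{Sec3Eq2} kills the Laplacian error to high order, the dominant contribution is $\overline\mu_k^2\hat h_k\hat u_k=\bigO\big(\overline\mu_k^4|y|^2(1+|y|)^{2-n}\big)$ (see \eqref{Lem3Eq7}--\eqref{Lem3Eq8}). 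Your final decay rate $(1+|y|)^{6-n}$ and the unrescaled term $\overline\mu_k^{\frac{n-2}{2}}(\overline\mu_k+d)^{6-n}$ are nonetheless correct.
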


\proof[Proof of Lemma \ref{Lem3}]
We only need to prove \eqref{Lem3Eq1} when $i=0$, since the cases $i=1$ and $i=2$ follow by standard interior estimates. Remark that when $x$ satisfies $d_{g_{\overline\xi_k}}\(x,\overline\xi_k\)\ge\sqrt{\overline\mu_k}$, the estimate \eqref{Lem3Eq1} when $i=0$ follows from \eqref{Lem1Eq2}. Therefore, it suffices to consider the case where $d_{g_{\overline\xi_k}}\(x,\overline\xi_k\)<\sqrt{\overline\mu_k}$ as $k\to\infty$. In this case, we use an approach inspired from the work of Chen--Lin \cite{ChenLin} in the case of positive solutions that was applied to the Yamabe equation by Marques \cite{Marques}. By letting  $x=\exp_{\overline\xi_k}\(\overline\mu_k y\)$ for $y\in B\(0,1/\sqrt{\overline\mu_k}\)$, it is easily seen that \eqref{Lem3Eq1} is implied by the following estimate:
\begin{multline}\label{Lem3Eq2}
\left|\nabla^i\(\hat{u}_k+B_0\)\(y\)\right|=\bigO\(\left\{\begin{aligned}&\frac{\overline\mu_k^{\frac{n-2}{2}}}{\(1+\left|y\right|\)^{i}}&&\text{if }\(M,g\)\text{ is l.c.f.}\\&\frac{\overline\mu_k^{\frac{n-2}{2}}}{\(1+\left|y\right|\)^{i}}+\frac{\overline\mu_k^4}{\(1+\left|y\right|\)^{n-6+i}}&&\text{otherwise}\end{aligned}\right\}\),
\end{multline}
where $\hat{u}_k$ is as in \eqref{Lem2Eq2} and we have let 
\begin{equation}\label{Sec3Eq6}
B_0\(y\):=\(1+\frac{\left|y\right|^2}{n(n-2)}\)^{-\frac{n-2}{2}}\quad\forall y\in\R^n.
\end{equation}
As mentioned before, we only have to prove \eqref{Lem3Eq2} for $i=0$. By conformal invariance of the conformal Laplacian, the equation \eqref{IntroEq1} can be rewritten as 
\begin{equation}\label{Lem3Eq3}
\Delta_{\hat{g}_k}\hat{u}_k+\overline\mu_k^2\hat{h}_k\hat{u}_k=\left|\hat{u}_k\right|^{2^*-2}\hat{u}_k\quad\text{in }B\big(0,1/\sqrt{\overline\mu_k}\big),
\end{equation}
where $\hat{h}_k$ and $\hat{g}_k$ are as in \eqref{Lem2Eq2} and \eqref{Lem2Eq3}. We let $y_k\in\overline{B\big(0,1/\sqrt{\overline\mu_k}\big)}$ be such that 
\begin{equation}\label{Lem3Eq4}
\left|\(\hat{u}_k+B_0\)\(y_k\)\right|=\max_{\overline{B(0,1/\sqrt{\overline\mu_k})}}\left|\(\hat{u}_k+B_0\)\right|=:\lambda_k,
\end{equation}
and we define
$$\psi_k:=\lambda_k^{-1}\(\hat{u}_k+B_0\).$$
By \eqref{Lem1Eq2}, $\lambda_k = \smallo(1)$ as $k\to\infty$.  By using \eqref{Lem3Eq3} together with the equation $\Delta_{\delta_0}B_0=B_0^{2^*-1}$, we obtain
\begin{equation}\label{Lem3Eq5}
\Delta_{\hat{g}_k}\psi_k=\(2^*-1\)B_0^{2^*-2}\psi_k+\lambda_k^{-1}f_k\quad\text{in }B\big(0,1/\sqrt{\overline\mu_k}\big),
\end{equation}
where
\begin{equation}\label{Lem3Eq6}
f_k:=\(\Delta_{\hat{g}_k}-\Delta_{\delta_0}\)B_0-\overline\mu_k^2\hat{h}_k\hat{u}_k+\left|\hat{u}_k\right|^{2^*-2}\hat{u}_k+B_0^{2^*-1}-\(2^*-1\)\lambda_k B_0^{2^*-2}\psi_k.
\end{equation}
We now estimate the terms in the right-hand side of \eqref{Lem3Eq6}. Since $B_0$ is radially symmetric, it follows from \eqref{Sec3Eq2} that 
\begin{equation}\label{Lem3Eq7}
\(\Delta_{\hat{g}_k}-\Delta_{\delta_0}\)B_0\(y\)=\bigO\big(\overline\mu_k^{N}\left|y\right|^{N-1}\left|\nabla B_0\(y\)\right|\big)=\bigO\(\frac{\overline\mu_k^{N}\left|y\right|^{N}}{\(1+\left|y\right|\)^n}\)
\end{equation}
uniformly with respect to $k\in\N$ and $y\in B\(0,1/\sqrt{\overline\mu_k}\)$. By using \eqref{Sec3Eq3} and \eqref{Lem1Eq2}, we obtain 
\begin{equation}\label{Lem3Eq8}
\hat{h}_k\(y\)\hat{u}_k\(y\)=\left\{\begin{aligned}&0&&\text{if }\(M,g\)\text{ is l.c.f.}\\&\bigO\(\frac{\overline\mu_k^2\left|y\right|^2}{\(1+\left|y\right|\)^{n-2}}\)&&\text{otherwise}\end{aligned}\right.
\end{equation}
uniformly with respect to $k\in\N$ and $y\in B\(0,1/\sqrt{\overline\mu_k}\)$. Since $2^*-2<2$ when $n\ge7$, by using \eqref{Lem1Eq2} together with straightforward estimates, we obtain
\begin{equation}\label{Lem3Eq9}
\left|\hat{u}_k\right|^{2^*-2}\hat{u}_k+B_0^{2^*-1}-\(2^*-1\)\lambda_k B_0^{2^*-2}\psi_k= \smallo\(\lambda_k B_0^{2^*-2}\left|\psi_k\right|\)
\end{equation}
as $k\to\infty$, uniformly with respect to $y\in B\(0,1/\sqrt{\overline\mu_k}\)$. Let $G_k$ be the Green's function of the operator $\Delta_{\hat{g}_k}$ in $B\(0,1/\sqrt{\overline\mu_k}\)$ with zero Dirichlet boundary condition. Then
\begin{multline}\label{Lem3Eq11}
\psi_k\(y\)=\int_{B\(0,1/\sqrt{\overline\mu_k}\)}G_k\(y,\cdot\)\(\(2^*-1\)B_0^{2^*-2}\psi_k+\lambda_k^{-1}f_k\)dv_{\hat{g}_k}\\
-\int_{\partial B\(0,1/\sqrt{\overline\mu_k}\)}\partial_\nu G_k\(y,\cdot\)\psi_k\,d\sigma_{\hat{g}_k}
\end{multline}
for all points $x\in B\(0,1/\sqrt{\overline\mu_k}\)$.
Standard estimates on Green's functions (see e.g. Robert \cite{RobDirichlet}) give that for large $k$, 
\begin{equation}\label{Lem3Eq12}
G_k\(y,z\)=\bigO\big(\left|y-z\right|^{2-n}\big)\quad\text{for }z\in B\big(0,1/\sqrt{\overline\mu_k}\big)
\end{equation}
and
\begin{equation}\label{Lem3Eq13}
\partial_\nu G_k\(y,z\)=\bigO\big(\left|y-z\right|^{1-n}\big)\quad\text{for }z\in\partial B\big(0,1/\sqrt{\overline\mu_k}\big)
\end{equation}
uniformly with respect to $k\in\N$ and $y\in B\(0,1/\sqrt{\overline\mu_k}\)$. Remark also that \eqref{Lem1Eq2} gives
\begin{equation}\label{Lem3Eq14}
\psi_k=\bigO\(\lambda_k^{-1}\overline\mu_k^{\frac{n-2}{2}}\)\,\text{as }k\to\infty\text{, uniformly in } B\big(0,1/\sqrt{\overline\mu_k}\big) \backslash B\big(0,1/2\sqrt{\overline\mu_k}\big) \ .
\end{equation}
By using \eqref{Lem3Eq7}, \eqref{Lem3Eq8}, \eqref{Lem3Eq9} and \eqref{Lem3Eq11}--\eqref{Lem3Eq14} together with straightforward integral estimates and the fact that $n\ge7$, we obtain
\begin{multline}\label{Lem3Eq15}
\psi_k\(y\)=\bigO\Bigg(\int_{B\(0,1/\sqrt{\overline\mu_k}\)}\frac{\left|\psi_k\(z\)\right|dz}{\left|y-z\right|^{n-2}\(1+\left|z\right|\)^4}+\lambda_k^{-1}\overline\mu_k^{\frac{n-2}{2}}\\
\left\{+\frac{\lambda_k^{-1}\overline\mu_k^4}{\(1+\left|y\right|\)^{n-6}}\quad\text{if }\(M,g\)\text{ is not l.c.f.}\right\}\Bigg)
\end{multline}
uniformly with respect to $k\in\N$ and $y\in B\(0,1/\sqrt{\overline\mu_k}\)$. We now claim that 
\begin{equation}\label{Lem3Eq16}
\lambda_k=\bigO\(\overline\mu_{k}^{\frac{n-2}{2}}\left\{+\overline\mu_{k}^4\quad\text{if }\(M,g\)\text{ is not l.c.f.}\right\}\)
\end{equation}
uniformly with respect to $k\in\N$. Assume by contradiction that \eqref{Lem3Eq16} does not hold true, i.e. there exists a subsequence $\(k_j\)_{j\in\N}$ of positive numbers such that
\begin{equation}\label{Lem3Eq17}
k_j\to0\quad\text{and}\quad\overline\mu_{k_j}^{\frac{n-2}{2}}\big\{+\overline\mu_{k_j}^4\quad\text{if }\(M,g\)\text{ is not l.c.f.}\big\}=\smallo\(\lambda_{k_j}\)
\end{equation}
as $j\to\infty$. Since $\left|\psi_{k_j}\right|\le1$ in $B\big(0,1/\sqrt{\overline\mu_{k_j}}\big)$ and $\hat{g}_{k_j}\to\delta_0$ as $j \to  \infty$, uniformly in compact subsets of $\R^n$, it follows from \eqref{Lem3Eq5}--\eqref{Lem3Eq9} together with standard elliptic estimates that up to a subsequence, $\(\psi_{k_j}\)_j$ converges in $C^1_{\loc}\(\R^n\)$ as $j\to\infty$ to a solution $\psi_0\in C^2\(\R^n\)$ of the equation
\begin{equation}\label{Lem3Eq10}
\Delta_{\delta_0}\psi_0=\(2^*-1\)B_0^{2^*-2}\psi_0\quad\text{in }\R^n.
\end{equation}
Independently, it follows from \eqref{Lem3Eq15} and \eqref{Lem3Eq17} that 
\begin{equation}\label{Lem3Eq18}
\psi_{k_j}\(y\)=\bigO\big(\(1+\left|y\right|\)^{-2}\big)+\smallo\(1\)
\end{equation}
as $j\to\infty$, uniformly with respect to $y\in B\big(0,1/\sqrt{\overline\mu_{k_j}}\big)$. Passing to the limit into \eqref{Lem3Eq18} as $j\to\infty$, we then obtain
\begin{equation}\label{Lem3Eq19}
\psi_0\(y\)=\bigO\big(\(1+\left|y\right|\)^{-2}\big)
\end{equation}
uniformly with respect to $y\in \R^n$. By applying Lemma 2.4 of Chen--Lin \cite{ChenLinMP}, it follows from \eqref{Lem3Eq10} and \eqref{Lem3Eq19} that
$$\psi_0\(y\)=c_0\frac{1-\frac{\left|y\right|^2}{n(n-2)}}{\(1+\frac{\left|y\right|^2}{n(n-2)}\)^{\frac{n}{2}}}+\sum_{i=1}^nc_i\frac{y_i}{\(1+\frac{\left|y\right|^2}{n(n-2)}\)^{\frac{n}{2}}}\quad\forall y\in\R^n$$
for some numbers $c_0,\dotsc,c_n\in\R$. On the other hand, it follows from \eqref{Lem1Eq1} that $\psi_k\(0\)=\left|\nabla\psi_k\(0\)\right|=0$, which gives $\psi_0\(0\)=\left|\nabla\psi_0\(0\)\right|=0$. Therefore, we obtain $c_0=\dotsb=c_n=0$, and so $\psi_0=0$ in $\R^n$. Since $\psi_{k_j}\(y_{k_j}\)=1$ for all $j\in\N$, where $y_{k_j}$ is as in \eqref{Lem3Eq4}, we then obtain that $\left|y_{k_j}\right|\to\infty$ as $j\to\infty$, which is in contradiction with \eqref{Lem3Eq18}. This proves that \eqref{Lem3Eq16} holds true. Finally, by using \eqref{Lem3Eq16} together with successive applications of \eqref{Lem3Eq15}, we obtain that 
\begin{equation}\label{Lem3Eq20}
\psi_k\(y\)=\bigO\Bigg(\frac{\ln\(2+\left|y\right|\)}{\(1+\left|y\right|\)^{n-2}}+\lambda_k^{-1}\overline\mu_k^{\frac{n-2}{2}}\left\{+\frac{\lambda_k^{-1}\overline\mu_k^4}{\(1+\left|y\right|\)^{n-6}}\quad\text{if }\(M,g\)\text{ is not l.c.f.}\right\}\Bigg)
\end{equation}
uniformly with respect to $k\in\N$ and $y\in B\big(0,1/\sqrt{\overline\mu_k}\big)$. The estimate \eqref{Lem3Eq2} with $i=0$ then follows from \eqref{Lem3Eq16} and \eqref{Lem3Eq20}.
\endproof

We can now use Lemmas \ref{Lem2} and \ref{Lem3} to prove Theorem \ref{Th2}.

\proof[Proof of Theorem \ref{Th2}]
Assume by contradiction that 
$$E(M, [g])\le Y(\mathbb{S}^n,[g_{std}])^{\frac{n}{2}}+Y(M,[g])^{\frac{n}{2}}.$$ 
By using the definition of $E(M, [g])$ together with a diagonal argument, it follows that there exists a sequence of blowing-up solutions $\(u_k\)_{k\in\N}$ to \eqref{IntroEq1} such that
$$E\(u_k\)\le Y\(M,[g]\)^{\frac{n}{2}}+Y\(\mathbb{S}^n,[g_{std}]\)^{\frac{n}{2}}+\smallo\(1\)$$
as $k\to\infty$. In the case where the functions $u_k$ do not change sign, we obtain a contradiction with the assumptions of Theorem \ref{Th2} by applying the compactness results for positive solutions of the Yamabe equation (see Schoen \cite{SchoenPreprint,Schoenlcf}, Li--Zhu \cite{LiZhu}, Druet \cite{DruetYlowdim}, Marques \cite{Marques}, Li--Zhang \cite{LiZhang1,LiZhang2} and Khuri--Marques--Schoen \cite{KhuMaSc}). Therefore, in what follows, we assume that the functions $u_k$ change sign. It then follows from Proposition~\ref{Pr1} that up to a subsequence and replacing $u_k$ by $-u_k$ if necessary, the functions $u_k$ are of the form \eqref{IntroEq3} for some energy-minimizing, positive solution $u_0$ to \eqref{IntroEq4}. In the case where $n\le6$, we can apply a more general compactness result that we obtained in Premoselli--V\'etois \cite{PremoselliVetois2} (Theorem 1.2 in \cite{PremoselliVetois2}). Therefore, in what follows, we assume that $n\ge7$. By using \eqref{Lem3Eq1} (see also \eqref{Lem3Eq2}) and \eqref{Lem3Eq7}, we then obtain
\begin{align}\label{Th3Eq1}
\(\Delta_{\hat{g}_k}-\Delta_{\delta_0}\)\hat{u}_k&=\bigO\Bigg(\frac{\overline\mu_k^{N}\left|y\right|^{N}}{\(1+\left|y\right|\)^n}+\left|\(\hat{g}_k-\delta_0\)\(y\)\right||\nabla^2\(\hat{u}_k+B_0\)\(y\)|\nonumber\\
&\qquad+\left|\nabla\(\hat{g}_k-\delta_0\)\(y\)\right|\left|\nabla\(\hat{u}_k+B_0\)\(y\)\right|\Bigg)\nonumber\allowdisplaybreaks\\
&=\bigO\(\overline\mu_k^{\frac{n+2}{2}}\left\{+\frac{\overline\mu_k^6\left|y\right|}{\(1+\left|y\right|\)^{n-4}}\quad\text{ if }\(M,g\)\text{ is not l.c.f.}\right\}\)
\end{align}
uniformly with respect to $k\in\N$ and $y\in B\(0,1/\sqrt{\overline\mu_k}\)$. It then follows from \eqref{Lem3Eq1}, \eqref{Lem3Eq8} and \eqref{Th3Eq1} that
\begin{multline}\label{Th3Eq2}
\int_{B\(0,1/\sqrt{\overline\mu_k}\)}\(\<\nabla\hat{u}_k,\cdot\>_{\delta_0}+\frac{n-2}{2}\hat{u}_k\)\(\(\Delta_{\hat{g}_k}-\Delta_{\delta_0}\)\hat{u}_k+\overline\mu_k^2\hat{h}_k\hat{u}_k\)dv_{\delta_0}\\
-\overline\mu_k^2\int_{B\(0,1/\sqrt{\overline\mu_k}\)}\(\<\nabla B_0,\cdot\>_{\delta_0}+\frac{n-2}{2}B_0\)\hat{h}_k B_0\,dv_{\delta_0}\\
=\bigO\(\left\{\begin{aligned}&\overline\mu_k^{\frac{n}{2}}&&\text{if }\(M,g\)\text{ is l.c.f.}\\&\overline\mu_k^{\frac{n}{2}}+\overline\mu_k^6&&\text{otherwise}\end{aligned}\right\}\)
\end{multline}
for large $k$. On the other hand, by using \eqref{Sec3Eq2} and \eqref{Sec3Eq3} together with straightforward computations and symmetry arguments, we obtain
\begin{multline}\label{Th3Eq3}
\int_{B\(0,1/\sqrt{\overline\mu_k}\)}\(\<\nabla B_0,\cdot\>_{\delta_0}+\frac{n-2}{2}B_0\)\hat{h}_k B_0\,dv_{\delta_0}\\
=\left\{\begin{aligned}&0&&\text{if }\(M,g\)\text{ is l.c.f.}\\&a_n\left|\Weyl_g\(\xi_0\)\right|_g^2\overline\mu_k^2+\smallo\(\overline\mu_k^2\)&&\text{otherwise}\end{aligned}\right.
\end{multline}
as $k\to\infty$, where
\begin{equation}\label{an}
a_n:=\frac{c_n}{24}(n-2)^2\int_{\R^n}\bigg(1+\frac{\left|y\right|^2}{n(n-2)}\bigg)^{1-n}\bigg(\frac{\left|y\right|^2}{n(n-2)}-1\bigg)\frac{\left|y\right|^2dy}{n(n-2)}.
\end{equation}
The constant $a_n$ is computed by using \eqref{indu} and \eqref{In}, and we obtain 
\begin{align}\label{Th3Eq4}
a_n=\frac{c_n}{6} \frac{n^{\frac{n}{2}}(n-2)^{\frac{n+4}{2}} }{n-6} \omega_{n-1} I_{n-1}^{\frac{n}{2}}&=\frac{n^{\frac{n+2}{2}}(n-2)^{\frac{n+4}{2}}}{12(n-6)(n-4)}\omega_{n-1}I_{n}^{\frac{n-2}{2}}\nonumber\\
&=\frac{n(n-2)^2}{6(n-6)(n-4)}K_n^{-n}.
\end{align}
By putting together \eqref{Lem2Eq1}, \eqref{Th3Eq2} and \eqref{Th3Eq3}, we then obtain
\begin{multline}\label{Th3Eq5}
\frac12 n^{\pui} (n-2)^{\frac{n+2}{2}}\omega_{n-1}u_0\(\xi_0\)\overline\mu_k^{\frac{n-6}{2}}-a_n\left|\Weyl_g\(\xi_0\)\right|_g^2\overline\mu_k^2\\
=\smallo\(\left\{\begin{aligned}&\overline\mu_k^{\frac{n-6}{2}}&&\text{if }\(M,g\)\text{ is l.c.f.}\\&\overline\mu_k^2+\overline\mu_k^{\frac{n-6}{2}}&&\text{otherwise}\end{aligned}\right\}\)
\end{multline}
as $k\to\infty$. In the case where $n=10$, we obtain
\begin{equation}\label{Th3Eq6}
2\cdot10^{-4} 8^{-6} a_{10}=\frac{5}{567}\,\omega_9.
\end{equation}
Finally, by using \eqref{Th3Eq5} and \eqref{Th3Eq6}, we obtain a contradiction with the assumptions of Theorem \ref{Th2}.
\endproof

\begin{remark}
The approach used in the proof of Theorem \ref{Th2} can be extended to the case of sequences of solutions $\(u_k\)_{k\in\N}$ of type \eqref{IntroEq3} to linear perturbations of the Yamabe equation of the form 
\begin{equation}\label{IntroEq2}
\Delta_gu_k+\(c_n\Scal_g+\eps_k h\)u_k=\left|u_k\right|^{2^*-2}u_k\quad\text{in }M,
\end{equation}
where $h\in C^{0,\vartheta}\(M\)$, $\vartheta\in\(0,1\)$, and $\(\eps_k\)_k$ is a sequence of positive real numbers such that $\eps_k\to0$ as $k\to\infty$. In the case of positive solutions, perturbed equations of the form \eqref{IntroEq3} have been studied for example by Esposito--Pistoia--V\'etois \cite{EspositoPistoiaVetois}, Morabito--Pistoia--Vaira \cite{MorabitoPistoiaVaira}, Premoselli \cite{Premoselli12} and Robert--V\'etois \cite{RobertVetois5}. In this case, in place of \eqref{Lem3Eq1}, we obtain, for $i\in\left\{0,1,2\right\}$,
\begin{align*}
&\left|\nabla^i\(u_k-u_0+B_k\)\(x\)\right|\\
&=\bigO\(\left\{\begin{aligned}&\frac{1}{\(\overline\mu_k+d_{g_{\overline\xi_k}}\(x,\overline\xi_k\)\)^{i}}+\frac{\eps_k\overline\mu_k^{\frac{n-2}{2}}}{\(\overline\mu_k+d_{g_{\overline\xi_k}}\(x,\overline\xi_k\)\)^{n-4+i}}\quad\text{if }\(M,g\)\text{ is l.c.f.}\\&\frac{1}{\(\overline\mu_k+d_{g_{\overline\xi_k}}\(x,\overline\xi_k\)\)^{i}}+\frac{\overline\mu_k^{\frac{n-2}{2}}\(\eps_k+\overline\mu_k^2+d_{g_{\overline\xi_k}}\(x,\overline\xi_k\)^2\)}{\(\overline\mu_k+d_{g_{\overline\xi_k}}\(x,\overline\xi_k\)\)^{n-4+i}}\quad\text{otherwise}\end{aligned}\right\}\)
\end{align*}
uniformly with respect to $k\in\N$ and $x\in M$, and in place of \eqref{Th3Eq5}, we obtain
\begin{multline*}
\frac12 n^{\pui} (n-2)^{\frac{n+2}{2}}\omega_{n-1}u_0\(\xi_0\)\overline\mu_k^{\frac{n-6}{2}}-a_n\left|\Weyl_g\(\xi_0\)\right|_g^2\overline\mu_k^2+b_n\eps_k h\(\xi_0\)\\
=\smallo\(\left\{\begin{aligned}&\eps_k+\overline\mu_k^{\frac{n-6}{2}}&&\text{if }\(M,g\)\text{ is l.c.f.}\\&\eps_k+\overline\mu_k^2+\overline\mu_k^{\frac{n-6}{2}}&&\text{otherwise}\end{aligned}\right\}\)
\end{multline*}
as $k\to\infty$, where $a_n$ is as in \eqref{an} and $b_n$ is another positive constant depending only on $n$. This yields that there does not exist any sign-changing blowing-up sequences of solutions of type \eqref{IntroEq3} to \eqref{IntroEq2} in each of the following situations:
\begin{itemize}
\item $3\le n\le 6$,
\item $h\ge0$ and $7\le n\le 9$,
\item $h\ge0$, $n=10$ and $u_0>\frac{5}{567}|\Weyl_g|^2_g$ for all points in $M$,
\item $h\le0$, $n=10$ and $u_0<\frac{5}{567}|\Weyl_g|^2_g$ for all points in $M$,
\item $h\ge0$, $n\ge11$ and $\(M,g\)$ is locally conformally flat,
\item $h\le0$, $n\ge11$ and $\Weyl_g\ne0$ for all points in $M$. 
\end{itemize}
Conversely, it is not difficult to adapt the constructive proofs in Esposito--Pistoia--V\'etois \cite{EspositoPistoiaVetois} and Robert--V\'etois \cite{RobertVetois5} to prove that if $u_0$ is a nondegenerate positive solution to \eqref{IntroEq4}, then there exists a blowing-up sequence of solutions of type \eqref{IntroEq3} to \eqref{IntroEq2} in each of the following situations:
\begin{itemize}
\item $\ds\min_Mh<0$ and $7\le n\le 9$,
\item $\ds\min_Mh<0$, $n=10$ and $u_0>\frac{5}{567}|\Weyl_g|^2_g$  for all points in $M$,
\item $\ds\max_Mh>0$, $n=10$ and $u_0<\frac{5}{567}|\Weyl_g|^2_g$ for all points in $M$,
\item $\ds\min_Mh<0$, $n\ge11$ and $\(M,g\)$ is locally conformally flat,
\item $\ds\max_Mh>0$, $n\ge11$ and $\Weyl_g\ne0$ for all points in $M$. 
\end{itemize}
This is again in sharp contrast with the case of positive solutions (see Esposito--Pistoia--V\'etois \cite{EspositoPistoiaVetois}), where blowing-up sequences of solutions to \eqref{IntroEq2} exist when $\ds\max_Mh>0$ in all dimensions $n\ge4$.
\end{remark}

\bibliographystyle{amsplain}
\bibliography{biblio}

\end{document}